\newcommand{\yd}[1]{{}_{\ku #1}^{\ku #1}\mathcal{YD}}
\newcommand{\ydg}{{}_{\ku G}^{\ku G}\mathcal{YD}}
\def\Si{\mathbb{S}_3}
\newcommand{\mf}[1]{\mathsf{#1}}
\newcommand{\Imm}{\operatorname{Im}}
\newcommand{\Ker}{\operatorname{Ker}}
\newcommand{\id}{\operatorname{id}}
\newcommand{\h}{\operatorname{H}}
\def\T3{\mathbb{T}_3}
\def\fij{\mf{x}^{i}\mf{z}^{2k}}
\def\gij{\mf{y}^{i}\mf{z}^{2k}}
\newcommand{\Ou}[2]{\Omega^{#1}\big(#2\big)}
\newcommand\B{\mathfrak B }
\newcommand{\gr}{\operatorname{gr}}
\newcommand{\R}{{\mathcal R}}
\newcommand{\ku}{{\Bbbk}}
\newcommand{\N}{{\mathbb N}}
\def\a{\mf{a}}\def\b{\mf{b}}\def\cc{\mf{c}}\def\d{\mf{d}}
\newcommand{\cA}{\mathcal{A}}
\newcommand{\cB}{\mathcal{B}}
\newcommand{\cO}{\mathbb{T}_3}
\newcommand{\cS}{\mathcal{S}}
\newcommand{\cE}{\mathcal{E}}
\newcommand{\cX}{\mathcal{X}}
\newcommand{\End}{\operatorname{End}}
\newcommand{\Ext}{\operatorname{Ext}}
\newcommand\sgn{\operatorname{sgn}}
\newcommand\Hom{\operatorname{Hom}}
\newcommand\Tot{\operatorname{Tot}}
\theoremstyle{plain}
\newtheorem{lema}{Lemma}[section]
\newtheorem{theorem}[lema]{Theorem}
\newtheorem{cor}[lema]{Corollary}
\newtheorem{prop}[lema]{Proposition}
\newtheorem{question-app}{Question}
\theoremstyle{definition}
\newtheorem{exa}[lema]{Example}
\theoremstyle{remark}
\newtheorem{obs}[lema]{Remark}
\def\cFK{\mathcal{FK}}
\newtheoremstyle{mystyle}{2mm}{0mm}{}{}{\bfseries}{}{ }{\thmnumber{#2}.\thmnote{
#3}}
\theoremstyle{mystyle}
\newtheorem{fact}[lema]{}
\newcommand{\ot}{\otimes}
\newcommand\restr[2]{{
  \left.\kern-\nulldelimiterspace 
  #1 
  \vphantom{\big|} 
  \right|_{#2} 
  }}
\def\pf{\begin{proof}}
\def\epf{\end{proof}}
\theoremstyle{remark}
\newcommand{\lD}{\delta}
\newcommand{\aba}{p_{aba}}
\newcommand{\ab}{p_{ab}}
\newcommand{\ba}{p_{ba}}
\newcommand{\as}{p_a}
\newcommand{\bs}{p_b}
\def\La{\Lambda}
\newcommand{\lp}{\boldsymbol{p}}
\newcommand{\llq}{\boldsymbol{q}}
\newcommand{\lx}{\boldsymbol{x}}
\begin{document}

\title[The cohomology ring of the 12-dimensional Fomin-Kirillov algebra]{The cohomology ring of the 12-dimensional Fomin-Kirillov algebra}

\author[D. \c  Stefan and C. Vay]{Drago\c s \c Stefan and Cristian Vay}

\address{University of Bucharest, Faculty of Mathematics and Computer Science, 14 Academiei Street, Bucharest
Ro-010014, Romania} \email{dragos.stefan@fmi.unibuc.ro}

\address{FaMAF-CIEM (CONICET), Universidad Nacional de C\'ordoba, Medina A\-llen\-de s/n, Ciudad Universitaria, 5000 C\' ordoba, Rep\'ublica Argentina.} \email{vay@famaf.unc.edu.ar}

\begin{abstract}
The  $12$-dimensional Fomin-Kirillov algebra $\cFK_3$ is defined as the quadratic algebra with generators $a$, $b$ and $c$ which satisfy the relations $a^2=b^2=c^2=0$ and  $ab+bc+ca=0=ba+cb+ac$. By a result of A. Milinski and H.-J. Schneider, this algebra is isomorphic to the Nichols algebra associated to the Yetter-Drinfeld module  $V$, over the symmetric group $\mathbb{S}_3$, corresponding to the conjugacy class of all transpositions and the sign representation. Exploiting this identification, we compute the cohomology ring $\Ext_{\cFK_3}^*(\ku,\ku)$, showing that it is a polynomial ring $S[X]$ with coefficients in the symmetric braided algebra of $V$. As an application we also compute the cohomology rings of the bosonization $\cFK_3\#\ku\mathbb{S}_3$ and of its dual, which are $72$-dimensional ordinary Hopf algebras. 
\end{abstract} 

\noindent\keywords{Nichols bialgebras, Fomin-Kirillov algebras, Yoneda ring}
\subjclass[2010]{18G15(primary), and 16T05 (secondary)}
\maketitle

\section{Introduction}
In \cite{fominkirilov},  Fomin and Kirillov introduced a new class of quadratic algebras $\big\{\cFK_n\big\}_{n\geq 3}$, as an attempt to give a combinatorial explanation of the fact that, with respect to the linear basis on the cohomology ring of a flag manifold  consisting of all Schubert cells,  the constant structures  are non-negative.

Fomin-Kirillov algebras have many intersting properties. For example, they are strongly related to Nichols algebras. Let us recall that, by definition, a Nichols algebra is an $\N$-graded bialgebra in the category of Yetter-Drinfeld $H$-modules, which is connected and generated by $1$-degree elements. Here $H$ may be any Hopf algebra over a field $\ku$, but in this paper we are interested in the case when $H$ is the group algebra $\ku G$ of a finite group $G$. Nichols algebras over abelian groups play a fundamental role in the classification of finite dimensional pointed Hopf algebras with commutative coradical, cf.  \cite{AS} and the references therein. They have also been used as a very important tool for the investigation of certain quantum groups. 
For a non-abelian group $G$, the structure of Nichols algebras over $\ku G$ is not so well understood, but nowadays this is a very dynamic  research area, see \cite{AFGV, AG, AG2, H, HLV, HV}. One of the main difficulties in the non-commutative setting is that only few finite dimensional Nichols algebras are known. 

Turning back to Fomin-Kirillov algebras, it was proved in \cite{MS} that $\cFK_n$ is a graded bialgebra in the category of Yetter-Drinfeld $\ku\mathbb{S}_n$-modules, and it is a Nichols algebra for $n=3,4$. A similar result was obtained for $n=5$ in \cite{grania}. Based on these results, Milinski and Schneider have conjectured that  $\cFK_n$ is a Nichols algebra too, for all other values of $n$. See also \cite[Conjecture 9.3]{fominkirilov}. Notably, for $n\leq 5$, it was proved that $\dim\cFK_n<\infty$ and it was conjectured that $\dim\cFK_n=\infty$, provided that $n\geq 6$.  

The theory of support and rank variety for Hopf algebras is the driving force behind the results concerning the finite generation of the cohomology ring of these and other related structures. Rather recently, the famous result due to Golod, Venkov and Evans \cite{golod,venkov, evans}, stating that the cohomology ring of a finite group is finitely generated, has been extended for several classes of Hopf algebras, including the finite dimensional cocommutative Hopf algebras 
 \cite{friedlandersuslin}, the finite dimensional pointed Hopf algebras with abelian coradical of dimension relatively prime to $210$ and the corresponding Nichols algebras \cite{MPSW}, small quantum groups \cite{ginzburgkumar} and finite quantum function algebras \cite{gordon}. 
 
 According to our best knowledge, analogous results are not known for  pointed Hopf algebras over non-abelian groups, excepting the case when the corresponding Nichols algebras are of diagonal type, as in \cite{MPSW}. Nevertheless, using the \texttt{GAP} package \texttt{QPA} the dimension of $\Ext_{\cFK_3}^n(\ku,\ku)$ was computed for $n$ up to $40$, the outcome suggesting that  the Hilbert series of the cohomology ring $E(\cFK_3)$  is 
\[
 H_{\cFK_3}=\frac{(1+t)(1+t+t^2)}{(1-t)(1-t^4)}
\]
and that $E(\cFK_3)$ is generated by three cohomology classes of degree $1$ and one of degree $4$, see \cite{solberg}.

The aim of this paper is to clarify the ring structure of $E(\cFK_3)$ and, in particular, to confirm the computer calculations that we mentioned above.  Roughly speaking, in our main result, Theorem \ref{te:main}, we prove that  $E(\cFK_3)$ is a polynomial ring $S[X]$ over the braided symmetric algebra $S=S(V)$ of the Yetter-Drinfeld $\ku\Si$-module $V$ that help us to regard $\cFK_3$ as a Nichols algebra $\mathfrak{B}(V)$.

The key steps of the proof of Theorem \ref{te:main} are the following. First we notice that, by \cite{MS}, the algebra $\cFK_3$ is a twisted tensor product $A\ot_\sigma R$, where $A$ and $R$ are presented by generators and relations as follows: $A=\big\langle a,b\mid a^2,b^2, aba-aba\big\rangle$ and  $R=\big\langle c\mid c^2\big\rangle$, see \S\ref{subsec:B is a twist}. 

This property allows us to use a  version of Cartan-Eilenberg spectral sequence, that we construct for any graded twisting product $B=A\ot_\sigma R$ with invertible twisting map $\sigma$ that satisfying an extra mild condition. By definition, its second page is given by $E^{p,q}_2=\Ext_R^p\big(\ku, E_q(A)\big)$ and it converges to $E(B)$, cf. Theorem \ref{te: CE} and Corollary \ref{co:CE}. In order to use this spectral sequence, we have to explicit the ring structure of the cohomology ring $E(A)$ and the $R$-action on $E(A)$. In the special case of the Fomin-Kirillov algebra $\mathcal{FK}_3$, this is done in the third section, see Theorem \ref{thm:EA algebra structure} and Theorem \ref{thm:R action}. 

As an important step, we find lower and upper bounds for the dimension of $E_n(\cFK_3)$. To be more specific, by Proposition \ref{prop:La B embeds into EB} the subalgebra generated by $E_1(\cFK_3)$ is precisely $S$, the braided symmetric algebra of the Yetter-Drinfeld module $V$. Thus, the dimension of $E_n(\cFK_3)$ is at least the dimension of $S_n$. On the other hand, using a basic property of spectral sequences, we deduce that the dimension of $E_n(\cFK_3)$ is at most $N_n=\sum_{p=0}^n\dim E^{p,n-p}_2$.  For $n\leq 3$ the lower and the upper bounds of $\dim E_n(\cFK_3)$ coincide and $N_4=\dim S_4+1$. Furthermore, there is a $4$-cohomology class which is a central element and does not belong to $S_4$. All these results imply that our spectral sequence degenerates and $\dim E_n(\cFK_3)$ reaches the upper bound,  see Theorem \ref{thm:dimE^n}.

To give a presentation  of $E(\cFK_3)$ by generators and relations we use that the cohomology ring of a Nichols algebra is braided commutative  \cite[Corollary 3.13]{MPSW}. Taking into account this result and the fact that $E(\cFK_3)$ is generated by elements of degree at most $4$ (see Proposition \ref{prop:GeneratorsYoneda}) we are able to prove that $E(\cFK_3)$ is a quotient graded algebra of $S[X]$. We conclude the proof of our main result by showing that the homogeneous components of $E(\cFK_3)$ and $S[X]$ are equidimensional. 

As an application of our main result we also compute the cohomology ring of the bosonization $\cFK_3\#\ku\mathbb{S}_3$. By definition, the bosonization of $\cFK_3$ is the ordinary Hopf algebra whose underlying vector space is $\cFK_3\ot\ku\mathbb{S}_3$ with the algebra and coalgebra structures given by the smash product and smash coproduct, respectively. Our computation is a consequence of the fact that $E(\cFK_3\#\ku\mathbb{S}_3)$ is the invariant ring of a canonical $\mathbb{S}_3$-action on $E(\cFK_3)$. More precisely, in Theorem \ref{te:Bos} we show that $E(\cFK_3\#\ku\mathbb{S}_3)$ is the quotient of the  polynomial ring $\ku[X,U,V]$ by the principal ideal $(U^2V-UV^2)$. A similar computation is performed in Theorem \ref{te:Bos 1} for the dual Hopf algebra of  $\cFK_3\#\ku\mathbb{S}_3$, which is also a bosonization of $\cFK_3$, regarded as a bialgebra in the category of Yetter-Drinfeld modules over the dual of $\ku\Si$.

Finally, we would like to mention that a similar strategy might be helpful for the characterization of the cohomology ring of other bialgebras in categories of Yetter-Drinfeld modules.

\section{Preliminaries}
\begin{fact}[Notation.] Throughout $\ku$ will denote a field of characteristic zero. 
Let $\cX$ be a set. We denote by $\ku \cX$ the vector space freely generated by $\cX$. If $\cX$ is a subset of a $\ku$-algebra $\La$, then $\ku[\cX]$ and $(\cX)$ denote the subalgebra and the ideal generated by $\cX$, respectively. If $\La$ is an $\N$-graded algebra, then $\La_n$ denotes the homogeneous component of degree $n$. For the two-sided ideal $\oplus_{n>0}\La_n$ we shall use the notation $\La_+$. An $\N$-graded algebra $\La$ is said to be connected if and only if $\La_0=\ku$.

If $V$ is a vector space, then let $V^{(n)}:=V\ot\cdots\ot V$, where the tensor product has $n$ factors.
If there is no danger of confusion we shall write  a tensor  monomial $v_1\ot\cdots\ot v_n\in V^{(n)}$ as an $n$-tuple $(v_1,\dots,v_n)$. We shall also use the shorthand notation $v^{(n)}$ for the element $(v,\dots,v)$.
\end{fact}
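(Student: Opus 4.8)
This final item is a \emph{Notation} block: it introduces symbols and terminology rather than asserting a mathematical fact, so there is no proposition to establish and consequently no proof to give. Concretely, it fixes the ground field $\ku$ (of characteristic zero), the free vector space $\ku\cX$ on a set $\cX$, the subalgebra $\ku[\cX]$ and the two-sided ideal $(\cX)$ generated by a subset $\cX$ of an algebra $\La$, the homogeneous components $\La_n$ of an $\N$-graded algebra, the augmentation ideal $\La_+=\oplus_{n>0}\La_n$, the notion of a \emph{connected} graded algebra (meaning $\La_0=\ku$), and the tensor-power conventions $V^{(n)}=V\ot\cdots\ot V$ together with the abbreviations $(v_1,\dots,v_n)$ for $v_1\ot\cdots\ot v_n$ and $v^{(n)}$ for $(v,\dots,v)$. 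Accordingly, the "approach" reduces to recording that each clause is a definition.

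Since these are definitions, the only thing one might wish to observe is that each is unambiguous and internally consistent, and this is immediate in every case. The assignment $\cX\mapsto\ku\cX$ is the standard free-module functor on the category of sets; $\ku[\cX]$ and $(\cX)$ are the uniquely determined smallest subalgebra and smallest two-sided ideal containing $\cX$, hence are well defined once $\La$ and $\cX$ are fixed; the decomposition $\La=\oplus_{n\ge 0}\La_n$ is part of the datum of an $\N$-grading, so $\La_n$ and $\La_+$ are determined by that datum; and $V^{(n)}$, being an $n$-fold tensor product over the fixed field $\ku$, is canonically defined up to the usual associativity coherence. No verification beyond these remarks is required.

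I would therefore not attempt a formal proof here; instead I would simply register that these conventions are in force and will be used tacitly throughout. Looking forward, they are precisely the hypotheses under which the later constructions are phrased: connectedness together with the augmentation ideal $\La_+$ is exactly what makes the cohomology ring $\Ext_{\cFK_3}^*(\ku,\ku)$ well behaved, while the tuple notation $(v_1,\dots,v_n)$ and the shorthand $v^{(n)}$ will streamline the bar-type resolutions and the spectral-sequence bookkeeping used in the sequel. The main "obstacle," such as it is, is purely expository — ensuring that these symbols are deployed consistently later — rather than mathematical.
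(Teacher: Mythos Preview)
Your assessment is correct: this is a notation block, not a result, and the paper accordingly gives no proof. Your remarks that the definitions are unambiguous and internally consistent are appropriate and match the paper's treatment.
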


\begin{fact}[The category \texorpdfstring{$\ydg$}{YD} and Nichols bialgebras.]
Let $G$ be a group. By definition, a Yetter-Drinfeld $\ku G$-module is a vector space $V$ which is endowed with a $G$-action by linear automorphisms and a decomposition as a direct sum of linear subspaces $V=\oplus_{g\in G} V_g$, such that $^gv\in V_{ghg^{-1}}$ for all $g,h\in G$ and $v\in V_h$. A linear map between two Yetter-Drinfeld $\ku G$-modules $V$ and $W$ is called a morphism of Yetter-Drinfeld modules if and only if it commutes with the actions and preserves the decompositions of $V$ and $W$. The category of Yetter-Drinfeld $\ku G$-modules will be denoted by $\ydg$.

The category $\ydg$ is braided monoidal. The tensor product of two objects $V$ and $W$ in $\ydg$ is $V\ot W$, the tensor product of the underlying linear spaces with the diagonal action and the decomposition $(V\ot W)_g=\sum_{x\in G}V_x\ot W_{x^{-1}g}$. The braiding in $\ydg$ is given, for  $v\in V_g$ and $w\in W$, by
\[
 \chi_{V,W}:V\ot W\to W\ot V,\quad \chi_{V,W}(v\ot w)={}^gw\ot v.
\]
An algebra $(\La,m,u)$ in $\ydg$ is an ordinary $\ku$-algebra with the additional property that the multiplication $m$ and the unit $u$ are morphisms of Yetter-Drinfeld modules. Coalgebras in this monoidal category can be defined in a similar way. The comultiplication and the counit of a coalgebra will be denoted by
$\Delta$ and $\varepsilon$, respectively. By an $\N$-graded (co)algebra we mean an ordinary $\N$-graded (co)algebra whose homogeneous components are Yetter-Drinfeld modules. 

An algebra $(\La,m,u)$ in $\ydg$ is braided commutative if and only if $m\chi_{\La,\La}=m$. On the other hand, we shall say that an $\N$-graded algebra $\La$ is braided graded commutative if and only if 
\[
 x\cdot y=m(x\ot y)=(-1)^{pq}m\chi_{\La,\La}(x\ot y)=(-1)^{pq}{}  ( {}^gy\cdot x),
\]
for any $x\in \La_g$ and $y\in \La$ homogeneous of degree $p$ and $q$, respectively.

By definition, a bialgebra in $\ydg$ consists of an algebra $(\La,m,u)$ and a coalgebra $(\La,\Delta,\varepsilon)$ such that $\Delta$ and $\varepsilon$ are morphisms of Yetter-Drinfeld modules. 
A bialgebra in $\ydg$ is said to be $\N$-graded if and only if the underlying algebra and coalgebra structures are graded and they have the same homogeneous components. A bialgebra $\La$ in $\ydg$ is called a Nichols algebra if and only $\La_0$ is $1$-dimensional, $\La_1$ coincides with the space of primitive elements and generates $\La$ as an algebra. 

For any Yetter-Drinfeld $\ku G$-module $V$, the tensor algebra $T(V)$ has a canonical bialgebra structure in $\ydg$. If $I(V)$ denotes the sum of all Yetter-Drinfeld submodules of $T(V)$ which are generated by $\N$-homogeneous elements of degree $\geq2$ and that are ideals and coideals of $T(V)$, then the quotient $\mathfrak{B}(V):=T(V)/I(V)$  is a Nichols algebra. It will be called the Nichols algebra associated to $V$. 

For more details on braided bialgebras and Nichols algebras, that are related to our work, the reader is referred to \cite[Section 2]{MS}. This paper is also useful to understand the relationship between Fomin-Kirillov and  braided bialgebras.
\end{fact}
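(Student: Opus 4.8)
The only assertion in the preceding fact that requires an argument is that $\mathfrak{B}(V)=T(V)/I(V)$ is a Nichols algebra, i.e.\ an $\N$-graded bialgebra in $\ydg$ with $\mathfrak{B}(V)_0=\ku$ whose degree-one component equals the space of primitives and generates it as an algebra. I would organize the proof in three steps.

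\textit{Step 1 (the quotient is a graded braided bialgebra).} I would first check that $I(V)$ is itself a Yetter-Drinfeld submodule, a two-sided ideal and a coideal of $T(V)$, concentrated in degrees $\geq 2$. This is purely formal: a sum of Yetter-Drinfeld submodules is again one, a sum of two-sided ideals is an ideal, a sum of coideals is a coideal, and being generated by $\N$-homogeneous elements of degree $\geq 2$ passes to sums. Hence $I(V)$ is a biideal of the graded braided bialgebra $T(V)$, so $\mathfrak{B}(V)$ inherits an $\N$-graded bialgebra structure in $\ydg$ with $\mathfrak{B}(V)_n=T(V)_n/I(V)_n$; in particular $\mathfrak{B}(V)_0=\ku$, $\mathfrak{B}(V)_1=V$, and $\mathfrak{B}(V)$ is generated in degree $1$ because $T(V)$ is.

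\textit{Step 2 (the primitives are exactly $V$).} The inclusion $V\subseteq P\big(\mathfrak{B}(V)\big)$ is immediate, since every element of $T(V)_1$ is primitive and this descends to the quotient. For the reverse inclusion I would exploit the maximality built into the definition: $I(V)$ is the largest Yetter-Drinfeld submodule of $T(V)$ that is simultaneously an ideal, a coideal and generated in degrees $\geq 2$. Suppose $\bar x\in\mathfrak{B}(V)_n$ were a nonzero primitive with $n\geq 2$, chosen $G$-homogeneous. Let $\mathfrak a$ be the two-sided ideal of $\mathfrak{B}(V)$ generated by $\bar x$. The key point, isolated in Step 3, is that $\mathfrak a$ is then automatically a coideal and a Yetter-Drinfeld submodule, hence a biideal generated in degrees $\geq 2$. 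Its preimage under the quotient map $T(V)\to\mathfrak{B}(V)$ is therefore a Yetter-Drinfeld submodule, ideal and coideal of $T(V)$, generated in degrees $\geq 2$, and it strictly contains $I(V)$ because it contains a lift of $\bar x\neq 0$ --- contradicting the maximality of $I(V)$. Thus $P\big(\mathfrak{B}(V)\big)=V=\mathfrak{B}(V)_1$, and together with Step 1 this shows $\mathfrak{B}(V)$ is a Nichols algebra.

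\textit{Step 3 (the main obstacle).} What really needs to be proved is the claim invoked above: in a connected $\N$-graded bialgebra in $\ydg$, the two-sided ideal generated by an $\N$- and $G$-homogeneous primitive element $\bar x$ is again a coideal. Concretely one must show $\Delta(h\,\bar x\,h')\in\mathfrak a\ot\mathfrak{B}(V)+\mathfrak{B}(V)\ot\mathfrak a$ for all $h,h'$, and the delicate part is that $\Delta$ is an algebra map only for the \emph{braided} tensor-product algebra structure on $\mathfrak{B}(V)\ot\mathfrak{B}(V)$, so one has to commute the factor $\Delta(\bar x)=\bar x\ot 1+1\ot\bar x$ past $\Delta(h)$ and $\Delta(h')$ using the braiding $\chi$, which is exactly where $G$-homogeneity of $\bar x$ enters. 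An alternative that bypasses this lemma is to build the canonical morphism of braided bialgebras $T(V)\to T^c(V)$ into the braided cotensor (shuffle) coalgebra extending $\mathrm{id}_V$, identify its kernel with $I(V)$ by a maximality argument, and observe that $\mathfrak{B}(V)$, being the image of a coalgebra map, has primitives contained in $P\big(T^c(V)\big)=V$; but setting up the universal property of $T^c(V)$ in the braided context requires an essentially equivalent computation.
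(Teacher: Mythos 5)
The paper gives no argument for this subsection: it is background, and the only assertion that needs proof --- that $T(V)/I(V)$ is a Nichols algebra --- is delegated to \cite[Section 2]{MS}. So your proposal is judged against the standard argument rather than against anything in the text. Your Steps 1 and 2 are sound as organisation, but the key claim you isolate in Step 3 is false as stated, and this is a genuine gap. The two-sided ideal $\mathfrak a$ of $\mathfrak B(V)$ generated by a \emph{single} $\N$- and $G$-homogeneous primitive $\bar x$ is in general neither a Yetter-Drinfeld submodule nor a coideal. For YD-stability note ${}^h(u\,\bar x\,v)=({}^hu)({}^h\bar x)({}^hv)$, and ${}^h\bar x$ need not lie in $\mathfrak a$: already in $T(V)$ for the module $V(\mathbb{T}_3)$ of this paper, the ideal generated by $a$ does not contain ${}^{(23)}a=-c$. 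The same orbit elements obstruct the coideal computation you sketch: writing $\Delta$ as an algebra map into the braided tensor product, $\Delta(u\,\bar x\,v)$ produces terms of the form $u_1\,({}^{\gamma}\bar x)({}^{\gamma}v_1)\ot u_2v_2$, where $\gamma$ is the $G$-degree of $u_2$, so one lands in the ideal generated by the whole $G$-orbit of $\bar x$, not by $\bar x$ alone.

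The repair is cheap and standard, and you should make it explicit: take $\mathfrak a$ to be the ideal generated by the Yetter-Drinfeld submodule spanned by $\{{}^h\bar x\mid h\in G\}$. Since $\Delta$ and the action are compatible (i.e.\ $\Delta$ is a morphism in $\ydg$), the space of primitives is itself a YD submodule, so each ${}^h\bar x$ is again primitive and $\N$-homogeneous of the same degree $n\geq 2$; with this larger generating set your braided computation does close up, giving $\Delta(\mathfrak a)\subseteq \mathfrak a\ot\mathfrak B(V)+\mathfrak B(V)\ot\mathfrak a$ and $\varepsilon(\mathfrak a)=0$, and $\mathfrak a$ is YD-stable by construction. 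Its preimage in $T(V)$ is then a YD submodule, ideal and coideal generated by $\N$-homogeneous elements of degree $\geq 2$ (choose homogeneous degree-$n$ lifts of the ${}^h\bar x$), strictly larger than $I(V)$, so your maximality contradiction in Step 2 goes through unchanged. With this correction the argument is a correct rendering of the standard proof; the alternative route through the braided shuffle coalgebra that you mention is likewise standard but, as you say, requires essentially the same braided bookkeeping.
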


\begin{fact}[The Nichols algebra  \texorpdfstring{$\B(\cO)$}{B(O}.]
 Let $\mathbb{S}_n$ denote the set of all permutation of  $\{1,\dots,n\}$. For the set of transposition in $\mathbb{S}_n$ we use the notation $\mathbb{T}_n$. 

Let $V(\cO)$ be the Yetter-Drinfeld module which as a linear space  has the basis $\{x_t\}_{t\in\cO}$ and whose $\ku\Si$-action and $\ku\Si$-coaction are given by
$$
{}^{g}x_t=\sgn(g)\,x_{gtg^{-1}}\quad\mbox{and}\quad \rho(x_t)=t\ot x_t
$$
for any $t\in \cO$ and $g\in\Si$. The Nichols algebra $\B(\cO)$ of $V(\cO)$ has been  by investigated intensively by several authors, see for instance \cite{AG2,fominkirilov,grania,MS,Ro}. 

Let us recall some basic facts about $\B(\cO)$ from the above references.
As an algebra, $\B(\cO)$ is generated by the set $\{x_t\mid t\in\cO\}$, subject to the relations
\begin{equation}\label{eq:relations of B}
x_t^2\quad\mbox{and}\quad\sum_{(t',t'')\in \cO(g)} x_{t'}x_{t''},
\end{equation}
where $t$ is an arbitrary transposition, $g$ is any $3$-cycle in $\Si$ and $\cO(g)=\{(t',t'')\in \cO\times\cO\mid t't''=g\}$. The comultiplication of $\B(\cO)$ is uniquely defined such that the generators $x_t$ are primitive elements. Since $\B(\cO)$ is a Nichols algebra, any primitive element is a linear combination of the three generators.

From now on, to simplify the notation, we shall denote the Nichols algebra $\B(\cO)$ by $B$ and for the generators we shall use the notation $a=x_{(12)}$, $b=x_{(23)}$ and $c=x_{(13)}$. 
Note that the square of each generator of $B$ is zero, and 
\begin{align}\label{eq:relations_B}
ab+bc+ca=0=ba+ac+cb.
\end{align}
We define $A=\ku[a,b]$ and $R=\ku[c]$ to be the subalgebras generated by $\{a,b\}$ and $\{c\}$, respectively. 

The subalgebra $A$ can be presented by generators and relations as follows: $A=\Bbbk\langle a,b\mid a^2,b^2,aba-bab\rangle$ and the set $\mathcal{A}=\{1,a,b,ab,ba,aba\}$ is a $\Bbbk$-linear basis of $A$, cf. \cite[Corollary 5.9]{MS}. Clearly, $R=\ku\langle c\mid c^2\rangle$. 
\end{fact}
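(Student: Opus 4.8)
The plan is to exhibit $A$ as a quotient of the abstract algebra $\widetilde A:=\ku\langle a,b\mid a^2,b^2,aba-bab\rangle$ and then match dimensions. First I would verify that the three defining relations of $\widetilde A$ hold for the images of $a$ and $b$ in $B=\B(\cO)$; since $A$ is by definition the subalgebra of $B$ generated by $a$ and $b$, this produces a surjective algebra map $\phi\colon\widetilde A\twoheadrightarrow A$. The relations $a^2=b^2=0$ are part of \eqref{eq:relations_B}. For the braid relation, left--multiplying $ab+bc+ca=0$ by $b$ gives $bab+bca=0$ (using $b^2=0$), while right--multiplying the same relation by $a$ gives $aba+bca=0$ (using $a^2=0$); subtracting, $aba=bab$ holds in $B$, hence in $A$.

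Next I would bound $\dim_\ku\widetilde A\le 6$ by a routine rewriting argument: modulo $a^2$ and $b^2$ every monomial in $a,b$ is an alternating word $1,a,b,ab,ba,aba,bab,abab,baba,\dots$, the relation $aba=bab$ identifies the two words of length $3$, and it forces $abab=(aba)b=(bab)b=0$ and $baba=(bab)a=(aba)a=0$, so every alternating word of length $\ge 4$ vanishes. Hence $\widetilde A$ is spanned by $\{1,a,b,ab,ba,aba\}$.

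The heart of the matter is then to show that $1,a,b,ab,ba,aba$ are $\ku$--linearly independent in $A\subseteq B$; this gives $\dim_\ku A\ge 6$, whence $\dim_\ku A=6=\dim_\ku\widetilde A$, $\phi$ is an isomorphism, and $\mathcal A$ is a basis. I would settle this by producing an explicit monomial basis of $B$. Using that $B=\B(\cO)$ is the quadratic algebra $\ku\langle a,b,c\mid a^2,b^2,c^2,\ ab+bc+ca,\ ba+ac+cb\rangle$, I would run Bergman's Diamond Lemma with the degree--lexicographic order induced by $c>b>a$: the leading monomials are $a^2,b^2,c^2,ca,cb$, so reduced words avoid these as subwords, which already forces any $c$ to occur only in the final position. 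Resolving the overlap ambiguity on $caa$ --- it reduces to $0$ via $a^2\mapsto 0$ and to $-aba+bab$ via $ca\mapsto -ab-bc$ followed by $bca\mapsto -bab$ --- produces precisely the extra reduction $bab\mapsto aba$, and one checks that once this is adjoined all remaining ambiguities (on $cbb$, $cca$, $babb$, $cbab$, etc.) are resolvable. The set of reduced words then splits into the six words in $a,b$ alone, namely $1,a,b,ab,ba,aba$, and the six words ending in a single $c$, namely $c,ac,bc,abc,bac,abac$, so $\dim_\ku B=12$, as it must. Since reductions applied to a word in $a,b$ never create the letter $c$, the six words $1,a,b,ab,ba,aba$ form a $\ku$--basis of the subalgebra $A=\ku[a,b]$; in particular they are independent. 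As an alternative to this last step, one may instead invoke the already known value $\dim_\ku\cFK_3=12$ together with the twisted tensor product decomposition $B\cong A\ot_\sigma R$ of \cite{MS} and $\dim_\ku R=2$ to read off $\dim_\ku A=6$. I expect the Diamond Lemma bookkeeping --- checking that every overlap ambiguity closes once $aba=bab$ has been adjoined --- to be the only genuinely laborious part of the argument; the rest is formal.
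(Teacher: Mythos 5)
Your argument is correct, but note that the paper does not prove this statement at all: it is a recollection of known facts, with the presentation and basis of $A$ imported verbatim from \cite[Corollary 5.9]{MS}. What you supply is a self-contained verification, and it holds up: the derivation of $aba=bab$ from \eqref{eq:relations_B} is right, the rewriting bound $\dim\widetilde A\le 6$ is right (the key point that $abab$ and $baba$ die once $aba=bab$ is adjoined is exactly what kills all longer alternating words), and the Diamond Lemma computation with $c>b>a$ does work: the leading words $a^2,b^2,c^2,ca,cb$ force $c$ into final position, the overlap $caa$ produces the extra rule $bab\mapsto aba$, and the remaining ambiguities ($cbb$, $cca$, $ccb$, $cbab$, $babb$, $babab$, \dots) all resolve, yielding the $12$ normal forms $\mathcal{A}\cup\mathcal{A}c$, which is precisely the basis $\mathcal{B}$ the paper later obtains from the twisted tensor decomposition. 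Two caveats on logical dependencies. First, your Gr\"obner-basis step proves the basis statement for the \emph{quadratic} algebra $\ku\langle a,b,c\mid a^2,b^2,c^2,ab+bc+ca,ba+ac+cb\rangle$; that this quadratic algebra coincides with the Nichols algebra $\B(\cO)$ is itself the Milinski--Schneider result being recalled, so your proof is conditional on that identification — which is fine, since it is not the part at issue. Second, your alternative shortcut (read off $\dim A=6$ from $\dim\cFK_3=12$, $B\cong A\ot_\sigma R$ and $\dim R=2$) must import $\dim\cFK_3=12$ from the literature, since in the paper the implication runs the other way: \S\ref{subsec:B is a twist} deduces $\dim B=12$ and the basis $\mathcal{B}$ from $\dim A=6$; you do say "already known value", so there is no circularity, but the dependence is worth keeping explicit.
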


\begin{fact}[Twisted tensor products.]\label{subsec:TTP}
 We shall see in the next subsection that $B$ is a twisted tensor product of the algebras $A$ and $R$. Since for our work this is one of the most important features of the Fomin-Kirillov algebra $B$, we recall some general facts about arbitrary twisted tensor products of algebras.  For more details on twisted tensor products of algebras see, for example, 
\cite{Cap95a,JPS,Ma,Ta}. 
Let $A$ and $R$ be associative and unital $\ku$-algebras with the product given by the maps  $m_A$ and $m_R$, respectively. By definition a $\ku$-linear map $\sigma :R\otimes A\rightarrow A\otimes R$ is a twisting map between $A$ and $R$ if and only if it is compatible with $m_A$ and $m_R$, that is
\begin{align*}
  \sigma (\id_{R}\otimes m_{A})& =(m_{A}\otimes \id_{R}) (\id_{A}\otimes \sigma ) (\sigma \otimes \id_{A}),  
	\\
  \sigma (m_{R}\otimes \id_{A})& =(\id_{A}\otimes m_{R}) (\sigma \otimes \id_{R}) (\id_{R}\otimes \sigma ).  
\end{align*}%
In addition, $\sigma $ must be compatible with the units of $A$ and $R$, that is $\sigma (r\otimes 1_{A})=1_{A}\otimes r$ and $\sigma (1_{R}\otimes x)=x\otimes 1_{R}$, for all $x\in A$ and $r\in R$. 

To any twisting map $\sigma$ between $A$ and $R$ corresponds an associative and unital algebra structure on $A\ot R$, whose product is given by the relation:
\[
 m_{A\ot R}:=(m_A\ot m_R)(\id_A\ot\,\sigma\ot\id_R).
\]
Of course, the unit of $A\ot R$ is $1_A\ot 1_R$. We shall call this algebra the $\sigma$-twisted tensor product of $A$ and $R$ and we shall denote it by $A\ot_\sigma R$. Throughout the paper we shall assume that the twisting map $\sigma$ is invertible. We denote its inverse by $\tau$.

Let us now assume that $A$ and $R$ are both graded. We shall say that the twisting map $\sigma $ between $A$ and $B$ is graded if it preserves the canonical gradings on $R\ot A$ and $A\ot R$. For such a  $\sigma $ the algebra $A\otimes _{\sigma }R$ is graded with respect to the decomposition $A\ot_\sigma R=\bigoplus_{d\geq 0}\big(\bigoplus_{p=0}^d A^{p}\otimes R^{d-p}\big)$.
In this paper we work with graded twisting maps which, in addition, satisfies the relation $\sigma(R\ot A_+)=A_+\ot R$. The restriction of $\sigma$ to  $R\ot A_+$ will be denoted by $\sigma_+$. For the inverse of $\sigma_+$ we shall use the notation $\tau_+$.

Let us assume that $R$ is finite dimensional. We fix a basis $\{e_1,\dots,e_n\}$ on $R$ such that $e_1=1_R$ and all other elements belong to $R_+$. For any $1\leq i,j\leq n$, there are endomorphisms $\sigma_{ij}$ and $\tau_{ij}$ of $A$ so that 
$$\sigma(e_i\ot x)=\sum_{j=1}^n \sigma_{ji}(x)\ot e_{j}\quad\text{and}\quad \tau(x\ot e_i)=\sum_{j=1}^n e_{j}\ot \tau_{ji}(x).
$$
Note that $\sigma$ and $\tau$ are uniquely defined by the matrices $\widetilde{\sigma}$ and $\widetilde{\tau}$ whose $(i,j)$-entries  are the endomorphisms $\sigma_{ij}$ and $\tau_{ij}$. Since $\tau$ is the inverse of $\sigma$, these matrices are inverses each other. If $x\in A$, then let $\widetilde{\sigma}(x)$ denote the matrix with $(i,j)$-entry $\sigma_{ij}(x)$. 

Since $\sigma$ is compatible with $1_R$ and $1_A$ we get $\sigma_{11}=\id_A$,  $\sigma_{1j}=0$ for $j>1$, and $\widetilde{\sigma}(1_A)$ is the unit matrix of order $n$ with elements in $A$. The compatibility relation between $\sigma$ and the product of $A$ can be written as $\widetilde{\sigma}(x)^t\cdot\widetilde{\sigma}(y)^t=\widetilde{\sigma}(xy)^t$, for any $x,y\in A$, where $\widetilde{\sigma}(x)^t$ denotes the transpose of $\widetilde{\sigma}(x)$. Let $\big\{c_{ij}^k\big\}_{i,j,k}$ be the structure constants  of $R$, that is $e_ie_j=\sum_{p=1}^n c_{ij}^pe_p$.  As $\sigma$ and $m_R$ are compatible, 
\[
 \sum_{p=1}^n c_{ij}^p\sigma_{kp}=\sum_{p,q=1}^n c_{pq}^k\sigma_{pi}\sigma_{qj}.
\]
For any $q>0$, we lift $\sigma$  to a linear map $\sigma^{(q)}:R\ot A^{(q)}\longrightarrow A^{(q)}\ot R$ such that $\sigma^{(1)}=\sigma$ and 
\begin{equation}\label{eq:sigma_recurence}
 \sigma^{(q+1)}=\big(\id_{A^{(q)}}\ot\; \sigma\big)\big(\sigma^{(q)} \ot\; \id_{A}\big).
\end{equation}
Obviously $\sigma^{(q)}$ is invertible. Let $\tau^{(q)}$ be the inverse of  $\sigma^{(q)}$. 
Therefore, if $x=x^1\ot\cdots \ot x^q$, then
\begin{align}\label{Sigma_n}
 \sigma^{(q)}(e_{i}\ot x)=&\sum_{i_1,\dots,i_{q}=1}^n \sigma_{i_1i}(x^1)\ot \sigma_{i_{2}{i_1}}(x^{2})\ot\cdots \ot \sigma_{i_{q}i_{q-1}}(x^{q}) \ot e_{i_{q}},\\\label{Tau_n}
 \tau^{(q)}(x\ot e_{i})=&\sum_{i_1,\dots,i_{q}=1}^n  e_{i_1}\ot \tau_{i_1i_2}(x^1)\ot\cdots \ot \tau_{i_{q-1}i_{q}}(x^{q-1})\ot\tau_{i_{q}i}(x^{q}).
\end{align} 
Since by assumption $\sigma(R\ot A_+)=A_+\ot R$, the endomorphisms $\sigma_{ij}$ and $\tau_{ij}$ map $A_+$ into itself. If $\sigma^{(q)}_+$ is the restriction of  $\sigma^{(q)}$ to $R\ot A^{(q)}_+$, then the image of this map is $A^{(q)}_+\ot R$. The inverse of  $\sigma^{(q)}_+$ will be denoted by  $\tau^{(q)}_+$.
\end{fact}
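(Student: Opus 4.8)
The statement collects several standard properties of the $\sigma$-twisted tensor product, and the plan is to verify each of them by a direct computation, reducing everything to the two compatibility conditions on $\sigma$, its unit axioms, and — where $A$ and $R$ are graded — its gradedness. I would start with the associativity and unitality of the product $m_{A\otimes R}=(m_A\otimes m_R)(\id_A\otimes\sigma\otimes\id_R)$ on $A\otimes R$, which is the classical fact underlying the construction (see \cite{Cap95a,JPS,Ma,Ta}): one expands $m_{A\otimes R}(m_{A\otimes R}\otimes\id_{A\otimes R})$ and $m_{A\otimes R}(\id_{A\otimes R}\otimes m_{A\otimes R})$ on $A\otimes R\otimes A\otimes R\otimes A\otimes R$, uses the compatibility condition for $m_A$ and the one for $m_R$ to bring both sides to the normal form in which the products inside $A$ and inside $R$ are performed last, and checks that the two normal forms agree; unitality is immediate from $\sigma(1_R\otimes x)=x\otimes 1_R$ and $\sigma(r\otimes 1_A)=1_A\otimes r$. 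When $\sigma$ is graded, $A\otimes_\sigma R$ is then automatically graded with the stated decomposition, because $m_{A\otimes R}$ is a composite of degree-preserving maps.

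Next I would pass to the matrix picture. Writing $\sigma(e_i\otimes x)=\sum_j\sigma_{ji}(x)\otimes e_j$ and $\tau(x\otimes e_i)=\sum_j e_j\otimes\tau_{ji}(x)$ and computing $\sigma\tau=\tau\sigma=\id$, a comparison of the coefficients of the $e_k$ gives at once $\widetilde\sigma\widetilde\tau=\widetilde\tau\widetilde\sigma=I_n$ in $M_n(\End A)$. The unit axioms, evaluated on $e_1\otimes x$ and on $e_i\otimes 1_A$, pin down $\sigma_{11}=\id_A$, the vanishing of the remaining $\widetilde\sigma$-entries involving $e_1$, and $\widetilde\sigma(1_A)=I_n$. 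Evaluating the first compatibility condition on $e_i\otimes x\otimes y$ and reading off the coefficients of $e_k$ yields the multiplicativity relation $\widetilde\sigma(x)^t\widetilde\sigma(y)^t=\widetilde\sigma(xy)^t$; evaluating the second compatibility condition on $e_i\otimes e_j\otimes x$, substituting $e_ie_j=\sum_p c_{ij}^pe_p$, and again comparing coefficients yields $\sum_p c_{ij}^p\sigma_{kp}=\sum_{p,q}c_{pq}^k\sigma_{pi}\sigma_{qj}$.

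Finally, for the iterated maps I would induct on $q$ using the recursion \eqref{eq:sigma_recurence}. That $\sigma^{(q)}$ is invertible is clear, since it is a composite of the invertible maps $\sigma^{(q-1)}\otimes\id_A$ and $\id_{A^{(q-1)}}\otimes\sigma$, with inverse $\tau^{(q)}=(\tau^{(q-1)}\otimes\id_A)(\id_{A^{(q-1)}}\otimes\tau)$; substituting the one-step formulas for $\sigma$ and $\tau$ into these recursions and telescoping the resulting chains of summation indices produces the closed expressions \eqref{Sigma_n} and \eqref{Tau_n}. The standing hypothesis $\sigma(R\otimes A_+)=A_+\otimes R$ forces every $\sigma_{ij}$, and hence (by invertibility) every $\tau_{ij}$, to carry $A_+$ into itself, so \eqref{Sigma_n} shows that $\sigma^{(q)}$ restricts to a bijection from $R\otimes A^{(q)}_+$ onto $A^{(q)}_+\otimes R$; this last point is exactly what is needed in \S\ref{subsec:B is a twist} to recognise the Fomin--Kirillov algebra $B$ as a graded twisted tensor product $A\otimes_\sigma R$. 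None of this is conceptually hard: the only spots demanding genuine care are the associativity computation, where the two compatibility conditions must be invoked at precisely the right places, and the bookkeeping of the $n$-fold index sums in \eqref{Sigma_n}--\eqref{Tau_n}, which I expect to be the main practical obstacle.
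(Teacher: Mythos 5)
Your verification is correct and proceeds exactly as the paper does: the associativity/unitality of $A\ot_\sigma R$ is the classical computation recalled from the cited references, and the matrix relations, the structure-constant identity, the closed formulas \eqref{Sigma_n}--\eqref{Tau_n} by induction on the recursion \eqref{eq:sigma_recurence}, and the preservation of $A_+$ (hence the bijectivity of $\sigma^{(q)}_+$) are obtained by the same coefficient comparisons the paper performs. The only point worth noting is that the unit axiom on $e_1\ot x$ kills the entries $\sigma_{j1}$ for $j>1$ (as in your reading and as in the upper-triangular matrix \eqref{eq:definition of sigma}), so your argument is consistent with the paper up to its transposed index in that one line.
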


\begin{fact}[The Nichols algebra  \texorpdfstring{$\B(\cO)$}{B(O)} as a twisted tensor product.]\label{subsec:B is a twist} 
We are going to apply a result due to A. Milinski and H.-J. Schneider, for proving that the Fomin-Kirillov algebra $B$ is a twisted tensor product. We keep the notation from  \cite[Theorem 3.2]{MS}. Let $L'$ and $L$ be the group algebras of the cyclic group generated by $(13)\in\Si$ and of $\Si$, respectively. We choose the braided bialgebras in the categories ${}^{L'}_{L'}\mathcal{YD}$ and ${}^{L}_{L}\mathcal{YD}$ to be $R$ and $B$, respectively. Finally, we take $i:R\to B$ and $\phi:B\to R$ to be the inclusion map and the algebra morphism such that $\phi(a)=\phi(b)=0$ and $\phi(c)=c$.

The Nichols algebra $R$ coacts on $B$ via the algebra map  $\rho:B\to B\ot R$ uniquely defined such that  $\rho(a)=a\ot 1$,  $\rho(b)=b\ot 1$ and $\rho(c)=c\ot 1+ 1\ot c$. The $R$-coinvariant subalgebra $B^{co(R)}$ and $A$ coincide. The multiplication map $m_B$ induces linear transformations  $\nu:A\ot R\rightarrow B$ and $\nu':R\ot A\to B$.  The former map is invertible, cf.  \cite[Theorem 3.2]{MS}. Thus, by \cite[p. 5]{BM}, the map $\sigma:=\nu^{-1}\nu'$ is a twisting map and $\nu$ is an isomorphism of algebras between $B$ and $A\ot_\sigma R$.   

Since any twisting tensor product $A\ot_\sigma R$ is free as a left $A$-module it follows that  $B$ as a left $A$-module is free as well. In particular, we deduce that $B$ is $12$-dimensional, having as a linear basis the set
\[\textstyle
 \mathcal{B}:=\cA\bigcup\cA c=\{1,a,b,ab,ba, aba,c,ac,bc,abc,bac, abac \}.
\] 
We use the basis $\{1,c\}$ of $R$ to compute the twisting map $\sigma$. Let $\widetilde{\sigma}$ be the corresponding matrix with elements in $\End_\ku(A)$, see \S\ref{subsec:TTP}.  For simplicity we shall rename  $\sigma_{12}$ and $\sigma_{22}$ by $\alpha$ and $\beta$. So for any $x\in A$,
\begin{equation}\label{eq:definition of sigma}
\widetilde{\sigma}(x)=
		\begin{bmatrix}
			x &  \alpha(x) \\
			0&\beta(x)
		\end{bmatrix}.
\end{equation}
Taking into account the equation \eqref{eq:relations_B} we have $ca=-ab-bc$. Thus $\sigma(c\ot a)=-1_R\ot ab-c\ot b$. In a similar way  we obtain the relation $\sigma(c\ot b)=-1_R\ot ba-c\ot a$. Hence 
\begin{align}\label{eq:DefinitionAlphaBeta}
  \alpha (a)=-ab,\quad  \alpha (b)=-ba,\quad
   \beta (a)=-b\quad \text{and}\quad  \beta (b)=-a.
\end{align}
The condition  $\widetilde{\sigma}(x)^t\cdot\widetilde{\sigma}(y)^t=\widetilde{\sigma}(xy)^t$, for any $x,y\in A$, means that $\beta$ is an algebra map and $\alpha$ is a $(1,\beta)$-skew derivation. 
Since $a$ and $b$  generate $A$  as an algebra, the above relations uniquely determine $\alpha$ and $\beta$ and imply the following equations:
\begin{align}\label{eq:properties of alpha and beta}
\alpha^2=0,\quad \beta\alpha=-\alpha\beta\quad\mbox{and}\quad\beta^2=1.
\end{align}
The twisting map $\sigma$ is bijective. Indeed, in view of the foregoing remarks, the inverse of $\widetilde{\sigma}$ is the matrix
\begin{equation*}
\widetilde{\tau}=
		\begin{bmatrix}
			\id_A & \beta\alpha\\
			0 &\beta
		\end{bmatrix}.
\end{equation*}
Clearly, $\beta$ is a graded algebra morphism. Although $\alpha$ does not preserve the grading, it maps $A_+$ to itself. It follows that  $\sigma(R\ot A_+)=A_+\ot R$.

Finally, we note that there exist 
$\alpha_n$, $\beta_n$,  $\alpha_n'$ and $\beta_n'$ in $\End_\ku\big(A^{(n)}\big)$ such that the linear liftings $\sigma^{(n)}$ and $\tau^{(n)}$ are given,  for any $x\in A^{(n)}$, by
\begin{equation}\label{ec:Sigma^n}
\sigma^{(n)}(c\ot x)=\alpha_n(x)\ot 1+\beta_n(x)\ot c\quad\text{and}\quad \tau^{(n)}(x\ot c)=1\ot\alpha_n'(x)+c\ot\beta_n'(x).
\end{equation}
By equations \eqref{Sigma_n} and \eqref{Tau_n} we deduce that
\begin{equation}\label{RAction}
 \begin{array}{lll}\displaystyle
 \alpha_{n} =\sum_{i=0}^{n-1}\beta^{\ot i}\ot \alpha\ot \id_{A^{(n-i-1)}}, && \beta_{n}  =\beta^{\ot n};\\[4mm] 
\displaystyle\alpha'_{n}=\sum_{i=0}^{n-1}\id_{A^{(i)}_+}\ot\;\beta\alpha\ot\beta^{\ot (n-i-1)},& & \beta'_{n}  =\beta^{\ot n}.
\end{array} 
\end{equation}
Obviously, $\sigma_+^{(n)}$ and $\tau_+^{(n)}$ also satisfy the relations \eqref{ec:Sigma^n} for any $x\in A_+^{(n)}$. 
\end{fact}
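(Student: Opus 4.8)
The plan is to prove the three assertions in this last paragraph: that $\sigma^{(n)}$ and $\tau^{(n)}$ decompose as in \eqref{ec:Sigma^n} along the basis $\{1_R,c\}$ of $R$, so that $\alpha_n,\beta_n,\alpha_n',\beta_n'\in\End_\ku(A^{(n)})$ are well defined; that they are given by the explicit formulas \eqref{RAction}; and that $\sigma_+^{(n)}$ and $\tau_+^{(n)}$ satisfy the same relations on $A_+^{(n)}$. The first assertion needs only that $R=\ku\langle c\mid c^2\rangle$ is two-dimensional: then $A^{(n)}\ot R=(A^{(n)}\ot 1_R)\oplus(A^{(n)}\ot c)$ and $R\ot A^{(n)}=(1_R\ot A^{(n)})\oplus(c\ot A^{(n)})$ as vector spaces, and writing $\sigma^{(n)}(c\ot x)$ and $\tau^{(n)}(x\ot c)$ in these decompositions defines the four endomorphisms, $\ku$-linearity being inherited from $\sigma^{(n)}$ and $\tau^{(n)}$.

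For \eqref{RAction} I would substitute the basis $e_1=1_R$, $e_2=c$ of $R$ into the general expressions \eqref{Sigma_n} and \eqref{Tau_n}. By \eqref{eq:definition of sigma} the only nonzero components of $\widetilde{\sigma}$ are $\sigma_{11}=\id_A$, $\sigma_{12}=\alpha$, $\sigma_{22}=\beta$, so since $\widetilde{\tau}=\widetilde{\sigma}^{-1}$ one has $\tau_{11}=\id_A$, $\tau_{12}=\beta\alpha$, $\tau_{22}=\beta$ (and $\sigma_{21}=\tau_{21}=0$). In the sum \eqref{Sigma_n} computing $\sigma^{(n)}(c\ot x)$, a summand indexed by $(i_1,\dots,i_n)\in\{1,2\}^n$ has a factor $\sigma_{i_j i_{j-1}}$, with $i_0=2$ by convention; this factor vanishes exactly when $(i_j,i_{j-1})=(2,1)$, so a nonzero summand forces the index string to have the shape $2^a 1^{\,n-a}$ for a unique $a\in\{0,\dots,n\}$. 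For such a string the factors are $\beta$ in the first $a$ slots, $\alpha$ in slot $a+1$ (when $a<n$) and $\id_A$ afterwards, while the tail of \eqref{Sigma_n} is $e_{i_n}=1_R$ if $a<n$ and $e_{i_n}=c$ if $a=n$. Summing over $a$ gives $\sigma^{(n)}(c\ot x)=\big(\sum_{a=0}^{n-1}\beta^{\ot a}\ot\alpha\ot\id_{A^{(n-a-1)}}\big)(x)\ot 1+\beta^{\ot n}(x)\ot c$, which is exactly the formula for $\alpha_n$ and $\beta_n$ in \eqref{RAction}.

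The computation of $\alpha_n'$ and $\beta_n'$ from \eqref{Tau_n} is the mirror image: the relevant factor is now $\tau_{i_j i_{j+1}}$, which vanishes for $(i_j,i_{j+1})=(2,1)$, so the surviving index strings are $1^a 2^{\,n-a}$, contributing $\id_A$ in the first $a-1$ slots, $\beta\alpha$ in slot $a$ and $\beta$ afterwards, with head $e_{i_1}=1_R$ if $a\ge 1$ and $e_{i_1}=c$ if $a=0$; summing yields the formulas for $\alpha_n'$ and $\beta_n'$ in \eqref{RAction} (the first block $\id_{A^{(i)}_+}$ there being the restriction of $\id_{A^{(i)}}$ to $A_+^{(n)}$, which is where $\tau_+^{(n)}$ lives). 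An equivalent route is induction on $n$ from the recursion \eqref{eq:sigma_recurence} and its inverse $\tau^{(n+1)}=(\tau^{(n)}\ot\id_A)(\id_{A^{(n)}}\ot\tau)$, with base case \eqref{eq:definition of sigma}; the inductive step is then a one-line manipulation in the $2\times 2$ matrix notation, in which the only identities used are $\beta^2=1$ and $\beta\alpha=-\alpha\beta$ from \eqref{eq:properties of alpha and beta}.

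For the last assertion, since $\alpha$ and $\beta$ map $A_+$ into itself, every endomorphism in \eqref{RAction} preserves $A_+^{(n)}$, so restricting \eqref{ec:Sigma^n} to $A_+^{(n)}$ and recalling that $\sigma_+^{(n)}$ and $\tau_+^{(n)}$ are by definition the restrictions of $\sigma^{(n)}$ and $\tau^{(n)}$ completes the proof. I do not expect a genuine obstacle: the whole thing is a finite bookkeeping. The only points that need care are keeping track of the direction in which $\sigma^{(n)}$ transports $R$ past the tensor factors of $A^{(n)}$ — from the left, which is why the $\beta$'s pile up on the left and the $\id_A$'s on the right in $\alpha_n$, while the inverse process reverses the roles and turns $\alpha$ into $\beta\alpha=-\alpha\beta$ in $\alpha_n'$ — and the two boundary strings $a=0$ and $a=n$, which are precisely what produce the $\beta_n$- and $\beta_n'$-terms.
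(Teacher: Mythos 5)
Your proposal is correct and takes essentially the same route as the paper, which obtains \eqref{RAction} by substituting the matrices $\widetilde{\sigma}=\left[\begin{smallmatrix}\id_A&\alpha\\0&\beta\end{smallmatrix}\right]$ and $\widetilde{\tau}=\left[\begin{smallmatrix}\id_A&\beta\alpha\\0&\beta\end{smallmatrix}\right]$ into the general formulas \eqref{Sigma_n} and \eqref{Tau_n}; your analysis of the surviving index strings $2^a1^{n-a}$ (resp.\ $1^a2^{n-a}$), of the boundary cases producing the $\beta_n$- and $\beta_n'$-terms, and of the restriction to $A_+^{(n)}$ fills in exactly the bookkeeping the paper leaves implicit. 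Your parenthetical reading of $\id_{A^{(i)}_+}$ as the restriction of $\id_{A^{(i)}}$ is also the intended one.
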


\begin{fact}[The normalized bar resolution.]\label{SubSecBarResolution}
Let $\Lambda$ be a connected graded $\ku$-algebra with associative multiplication $m:\La\ot\La\to\La$.  We denote the restriction of $m$ to $\La_+\ot\La_+$ by $m_+$.

Recall that the normalized bar resolution $P_*(\La)$ of $\ku$ in the category of left $\La$-modules is given by the exact sequence
\begin{align*}
0\longleftarrow \ku\overset{d_0}{\longleftarrow}\La\overset{d_1}{\longleftarrow}\La\ot\La_+\overset{d_2}{\longleftarrow}\La\ot\La_+^{(2)}\overset{d_3}{\longleftarrow}\La\ot\La_+^{(3)}\overset{d_4}{\longleftarrow}\La\ot\La_+^{(4)}\overset{d_5}{\longleftarrow}\cdots
\end{align*}
where $d_0$ is the augmentation map of $\La$. 

For $n>0$ we have  $d_n=\sum_{i=0}^{n-1}(-1)^id_n^i$ and every map $d_n^i$ is induced by $m$. More precisely, $d_n^i(\lambda_0\ot\cdots\ot\lambda_n)=\lambda_0\ot\cdots\ot\lambda_i\lambda_{i+1}\ot\cdots\ot\lambda_n$.

To compute  $E_n(\La,M):=\Ext_\La^n(\ku,M)$ we use the  complex $\Ou{*}{\La,M}$ obtained by applying the functor $\Hom_\La(-,M)$ to $P_*(\La)$. Thus in degree $n$ we have $\Ou{n}{\La,M}=\Hom_\ku(\La_+^{(n)},M)$, where by convention $\Ou{0}{\La,M}=M$.  The differential maps are given by $\lD^0(m)(\lambda)=\lambda\cdot m$ and
\[
 \lD^n(f)(\lambda_0,\dots,\lambda_n)=\lambda_0\cdot f(\lambda_1,\dots,\lambda_n)+\sum_{i=0}^{n-1}(-1)^{i+1}f(\lambda_0,\dots,\lambda_i\lambda_{i+1},\dots,\lambda_n)
\]
for any $n>0$ and $f\in\Ou{n}{\La,M}$. Recall that $(\lambda_0,\dots,\lambda_n)$ is a shorthand notation for  $\lambda_0\ot\cdots\ot\lambda_n$.

If  $M=\ku$, then we shall use the notation $\Ou{*}{\La}:=\Ou{*}{\La,\ku}$. Since the action of $\Lambda$ on $\ku$ is trivial, $\big(\Ou{*}{\La},\lD^*\big)$ is a DG-algebra with respect to the Yoneda product
\[
 f g(\lambda_1,\dots,\lambda_{n+m})=f(\lambda_1,\dots,\lambda_n)\,g(\lambda_{n+1},\dots,\lambda_{n+m}),
\]
where $f\in\Ou{n}{\La}$ and $g\in\Ou{m}{\La}$. The vector space $E(\La):=\oplus_{n\in\mathbb{N}}E_n(\La,\ku)$ inherits an $\N$-graded algebra structure, which will be called the Yoneda ring or the cohomology ring of $\La$. 

In the case when $\La$ is finite dimensional, the complex $\Ou{*}{\La}$ can be rewritten as follows. Let $V$ denote the dual vector space of $\La_+$. Hence $\Ou{n}{\La}\cong V^{(n)}$ and, via this identification, $\lD^1$ can be regarded  as a map from $V$ to $V\ot V$ such that
\[
 \lD^1(f)=-\sum_{i=1}^n f_i'\ot f_i''
\]
if and only if $f(\lambda_1\lambda_2)=\sum_{i=1}^n f_i'(\lambda_1) f_i''(\lambda_2)$, for any $\lambda_1,\lambda_2\in \La_+$. 

Similarly, for $p>1$ the differential map $\lD^p$ is a morphism from $V^{\ot p}$ to $V^{\ot p+1}$satisfying the relation:
\begin{align}
\lD^{p+1}=\lD^1\ot\id_{V^{(p)}}-\id_V\ot\lD^p.
\end{align}
In conclusion, the  $n$-degree component $E_n(\La)$ of $E(\La)$ is the $n$th cohomology group of the complex:
\begin{align}\label{NormalizedBarComplex}
0\longrightarrow\ku\overset{\lD^0}{\longrightarrow}V\overset{\lD^1}{\longrightarrow}V^{(2)}\overset{\lD^2}{\longrightarrow}V^{(3)}\overset{\lD^3}{\longrightarrow}V^{(4)}\overset{\lD^4}{\longrightarrow}\cdots.
\end{align}
It is worthwhile to note that the Yoneda product on $\Ou{*}{\La}$ coincides with the multiplication of the tensor (free) algebra $T(V)$ of the vector space $V$.
\end{fact}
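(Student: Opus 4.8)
The statements assembled in this recollection are all standard, so the plan is to indicate how each of them is obtained rather than to carry out the (routine) verifications. First I would recall that the normalized bar resolution $P_*(\La)$ is a projective resolution of $\ku$: each term $\La\ot\La_+^{(n)}$ is free as a left $\La$-module, so only exactness has to be checked, and this is furnished by the classical contracting homotopy $s_n\colon\La\ot\La_+^{(n)}\to\La\ot\La_+^{(n+1)}$, $s_n(\lambda_0\ot\cdots\ot\lambda_n)=1\ot\bar\lambda_0\ot\lambda_1\ot\cdots\ot\lambda_n$ with $\bar\lambda_0:=\lambda_0-\varepsilon(\lambda_0)1\in\La_+$, together with $s_{-1}\colon\ku\to\La$, $1\mapsto1$; connectedness of $\La$ is exactly what yields the decomposition $\La=\ku\oplus\La_+$ used here, and $d_{n+1}s_n+s_{n-1}d_n=\id$ is checked term by term. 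Applying $\Hom_\La(-,M)$ and the adjunction $\Hom_\La\big(\La\ot\La_+^{(n)},M\big)\cong\Hom_\ku\big(\La_+^{(n)},M\big)$ then produces the complex $\Ou{*}{\La,M}$ with the asserted differential; for $M=\ku$, where $\La$ acts through $\varepsilon$, the leading summand $\lambda_0\cdot f(\dots)$ of $\lD^n$ vanishes on $\La_+^{(n+1)}$ and we are left with $\lD^n(f)=\sum_{i=0}^{n-1}(-1)^{i+1}f(\dots,\lambda_i\lambda_{i+1},\dots)$.

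For the DG-algebra claim I would simply inspect the formulas: the Yoneda concatenation product on $\Ou{*}{\La}$ is visibly associative and unital, and the Leibniz rule with respect to $\lD^*$ is obtained by matching terms, the only sign discrepancies being the expected Koszul signs; hence $E(\La)$ inherits a graded algebra structure.

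For the finite-dimensional reformulation I would use that $\dim_\ku\La<\infty$ makes the canonical maps $\big(\Hom_\ku(\La_+,\ku)\big)^{\ot n}\to\Hom_\ku\big(\La_+^{(n)},\ku\big)$ isomorphisms, so that $\Ou{n}{\La}\cong V^{(n)}$ with $V=(\La_+)^*$; dualizing $m_+\colon\La_+\ot\La_+\to\La_+$ identifies $\lD^1$ with $-m_+^*$, which is precisely the stated characterization of $\lD^1$. The recursion $\lD^{p+1}=\lD^1\ot\id_{V^{(p)}}-\id_V\ot\lD^p$ I would obtain by writing $\lD^n=\sum_{i=0}^{n-1}(-1)^{i+1}\mu_i^*$, where $\mu_i\colon\La_+^{(n+1)}\to\La_+^{(n)}$ multiplies the factors in positions $i$ and $i+1$: the $i=0$ term of $\lD^{p+1}$ arises from $\mu_0=m_+\ot\id_{\La_+^{(p)}}$ and dualizes to $\lD^1\ot\id_{V^{(p)}}$, while for $i\geq1$ one has $\mu_i=\id_{\La_+}\ot\,\nu_{i-1}$ with $\nu_{i-1}$ a multiplication on $p+1$ factors, and reindexing the corresponding partial sum turns it into $-\id_V\ot\lD^p$. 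This puts $E_n(\La)$ in place as the $n$th cohomology of the complex \eqref{NormalizedBarComplex}. Finally, the last assertion is immediate: under $\Ou{n}{\La}\cong V^{(n)}$ the defining formula for the Yoneda product carries $(f^1\ot\cdots\ot f^n,\,g^1\ot\cdots\ot g^m)$ to the concatenation $f^1\ot\cdots\ot f^n\ot g^1\ot\cdots\ot g^m$, which is precisely the multiplication of $T(V)=\bigoplus_{n\geq0}V^{(n)}$.

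I do not expect any genuine obstacle here: everything reduces to standard homological algebra and elementary linear duality, and the only point that needs attention is the sign and reindexing bookkeeping in the recursion for $\lD^p$.
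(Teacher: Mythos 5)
Your verification is correct and follows exactly the standard route that the paper implicitly relies on (the paper states these facts without proof, as a recollection): freeness of the terms plus the contracting homotopy built from $\lambda_0\mapsto\lambda_0-\varepsilon(\lambda_0)1$, the adjunction identification of the Hom complex, the Leibniz-rule check for the cup product, and the dualization giving $\lD^1=-m_+^*$ together with the sign/reindexing bookkeeping for the recursion $\lD^{p+1}=\lD^1\ot\id_{V^{(p)}}-\id_V\ot\lD^p$. No gaps.
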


\begin{exa}\label{ex:Omega(R)}
 Let $R=\ku\big\langle c|c^2\big\rangle$. For an $R$-module $M$ we define $D\in\End_\ku(M)$ by $D(m)=c\cdot m$. Since $R_+$ is $1$-dimensional the normalized bar complex $\Ou{*}{R,M}$ can be identified to the complex $(C^*,d^*)$, where $C^n:=M$ and $d^n=D$. Hence $\Ext^0_R(R,M)=\Ker D$ and $\Ext^n_R(R,M)=\Ker D/\Imm D$, for $n>0$.
 
If $M=\ku$ is the trivial $R$-module, then $D=0$. Thus, $E(R)$ is the polynomial ring $\ku[X]$. The indeterminate $X$ is an element of degree $1$, which corresponds to the $1$-cocycle defined by the linear transformation  $f:R_+\to\ku$,  $f(c)=1$.
\end{exa}

\begin{fact}[The normalized bar complex of an algebra in \texorpdfstring{$\ydg$}{YD}.]
We now take $\La$ to be a finite dimensional algebra in the  category of graded Yetter-Drinfeld modules over $\ku G$. Thus  $V$, the dual of $\La_+$, is also an object in this category. Its  component $V_p$ of degree $p$ consists of all linear forms which vanish on $\La_q$ for any positive $q\neq p$, and $V_p$ is an object in the category $\yd{G}$ with respect to the action $(g,f)\longmapsto {}^gf$, where ${^gf}(v)=f\big({}^{g^{-1}}v\big)$. The coaction is defined by the decomposition $V_p=\bigoplus_{g\in G}V_{p,g}$. By definition, $f\in V_{p,g}$ if and only if $f(v)=0$  for any $v\in \La_{p,h}$ such that $h\neq g^{-1}$. For details on the Yetter-Drinfeld module structure of the linear dual of an object in $\ydg$ the reader is referred to \cite[Section 2]{AG}. 
Since $\ydg$ is monoidal, $V^{(n)}:=\bigoplus_{p\in\N} V^{(n)}_p$ is a graded object of this category, where
\[\textstyle
 V^{(n)}_p:=\big\langle v_1\ot\cdots\ot v_n\mid \sum\limits_{i=1}^n\deg v_i=p\big\rangle.
\]
By definition, the $\ku G$-module structure of $V^{(n)}_p$  is induced by the diagonal action, and the component of degree $g\in G$ is spanned by all $ v_1\ot\cdots\ot v_n\in V^{(n)}_p$ such that $v_i\in V_{g_i}$ and  $g_1\cdots g_n=g$. 

The differentials of \eqref{NormalizedBarComplex} are morphism of graded Yetter-Drinfeld modules, so $\Ou{*}{\La}$ is a direct sum 
\[\textstyle
 \Ou{*}{\La}=\bigoplus\limits_{p\in\mathbb{N}}\Ou{*}{\La,p}
\]
of subcomplexes in $\ydg$.
Since $\Ou{n}{\La,p}=V^{(n)}_p$, it follows that  $\Ou{n}{\La,p}=0$ for $n>p$.

Let $\Omega^*(\La,p)_e^{G}$ be the subcomplex of all cochains in  $\Omega^*(\La,p)$ that are of degree $e$ and $G$-invariant. Hence,
$\omega\in\Omega^n(\La,p)_e^{G}$ if and only if 
$^g\omega=\omega$ for all $g\in G$ and $\omega$ belongs to $\Omega^n(\La,p)_e$, the component of  $e$-degree elements of the Yetter-Drinfeld module $\Omega^n(\La,p) $. 
\end{fact}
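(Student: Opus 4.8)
What the statement asks for is a bundle of compatibilities: that $V=(\La_+)^*$ is a graded object of $\ydg$ with the action, coaction and $\N$‑grading indicated; that each differential $\lD^n$ of \eqref{NormalizedBarComplex} is a morphism of graded Yetter–Drinfeld modules; that consequently $\Ou{*}{\La}$ splits in $\ydg$ as $\bigoplus_{p}\Ou{*}{\La,p}$; and that $\Ou{n}{\La,p}=0$ whenever $n>p$. The plan is to trace all of this back to the single structural input that $\La$ is an algebra in $\ydg$ — i.e.\ that its multiplication $m$ is a morphism of Yetter–Drinfeld modules — together with the fact that $\ydg$ is a braided monoidal category whose finite‑dimensional objects are rigid.

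First I would record, following \cite[Section 2]{AG}, the Yetter–Drinfeld structure on the dual. Since $\La$ is finite dimensional, $\La_+$ is a finite‑dimensional graded object of $\ydg$, and $V=(\La_+)^*$ is again such an object, with the contragredient action $({}^{g}f)(v)=f({}^{g^{-1}}v)$ and with coaction determined by $f\in V_g\iff f|_{\La_h}=0$ for every $h\neq g^{-1}$. The Yetter–Drinfeld condition ${}^{g}f\in V_{ghg^{-1}}$ for $f\in V_h$ is a one‑line transcription of ${}^{g}\La_k\subseteq\La_{gkg^{-1}}$; and dualizing the $\N$‑grading of $\La_+$ yields $V=\bigoplus_{p\ge1}V_p$ with $V_p\cong(\La_p)^*$, so in particular $V_p=0$ for $p\le 0$. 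This last observation is the only numerical input the vanishing assertion needs.

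Next I would treat the differentials. Because $m$ is a morphism in $\ydg$ and is $\N$‑graded ($m(\La_p\ot\La_q)\subseteq\La_{p+q}$), its restriction $m_+\colon\La_+\ot\La_+\to\La_+$ is a morphism of graded Yetter–Drinfeld modules. Dualizing within $\ydg$ — which, on finite‑dimensional objects, is a contravariant functor sending morphisms of $\ydg$ to morphisms of $\ydg$ — the transpose of $m_+$ corresponds, up to a sign and up to the chosen identification $\Ou{2}{\La}\cong V^{(2)}$, to $\lD^1$; hence $\lD^1$ is a morphism of graded Yetter–Drinfeld modules, homogeneous of internal degree $0$ (and $\lD^0$, being the zero map, is trivially one). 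For $p>1$ the recursion $\lD^{p+1}=\lD^1\ot\id_{V^{(p)}}-\id_V\ot\lD^p$ builds $\lD^p$ from tensor products, identity maps and differences of morphisms of $\ydg$, all homogeneous of internal degree $0$, so by induction every $\lD^p$ is a morphism of graded Yetter–Drinfeld modules. In particular each $\lD^n$ respects the decomposition $V^{(n)}=\bigoplus_pV^{(n)}_p$ and the $\ku G$‑action and ‑coaction, giving $\Ou{*}{\La}=\bigoplus_p\Ou{*}{\La,p}$ as a direct sum of subcomplexes in $\ydg$. The vanishing is then a degree count: $\Ou{n}{\La,p}=V^{(n)}_p$ is spanned by tensors $v_1\ot\cdots\ot v_n$ of homogeneous $v_i\in V$ with $\sum_i\deg v_i=p$, and since each $\deg v_i\ge1$, such a tensor is zero unless $p\ge n$.

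The one point that demands genuine care — and the place I would expect to be the main obstacle — is the precise identification of $\Ou{n}{\La}=(\La_+^{(n)})^*$ with $V^{(n)}$ endowed with its tensor‑product Yetter–Drinfeld structure. In a braided category the dual of a tensor product comes with a reversal of the tensor factors, so the identification that makes $\lD^n$ a morphism of $\ydg$ is \emph{not} the naive ``pair coordinatewise'' isomorphism in terms of which the explicit formula $\lD^1(f)=-\sum_i f_i'\ot f_i''$ is written; with the naive identification the $\ku G$‑coaction comes out with its group multiplication reversed. I would handle this either by carrying the rigid structure of $\ydg$ through the argument explicitly, or — since the later sections use only the underlying $\N$‑grading, the $\ku G$‑action, and the neutral ($e$-)component of the coaction, all insensitive to this reversal because $g_1\cdots g_n=e\iff g_n^{-1}\cdots g_1^{-1}=e$ — by noting that on that coarser data the naive identification is already compatible, so that in particular $\Omega^*(\La,p)_e^{G}$ is well defined independently of the order convention.
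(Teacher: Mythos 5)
Your overall route is the right one, and it is essentially the argument the paper leaves implicit: dualize $m_+$ inside the rigid braided category $\ydg$ to see that $\lD^1$ is a morphism of graded Yetter--Drinfeld modules, propagate this through the recursion $\lD^{p+1}=\lD^1\ot\id_{V^{(p)}}-\id_V\ot\lD^p$, and obtain $\Ou{n}{\La,p}=0$ for $n>p$ from the fact that each tensor factor of a nonzero element of $V^{(n)}_p$ has degree at least $1$. You also deserve credit for isolating the one genuinely delicate point, namely that the identification $(\La_+^{(n)})^*\cong V^{(n)}$ in a braided category involves a reversal of tensor factors, so the coordinatewise pairing does not intertwine the monoidal coaction on $V^{(n)}$ with the dual coaction.

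Your disposal of that reversal on the $e$-component, however, does not work as written. With the paper's convention ($f\in V_g$ iff $f$ is supported on $\La_{g^{-1}}$), the coordinatewise functional $f_1\ot\cdots\ot f_n$ with $f_i\in V_{g_i}$ is supported on the component of $\La_+^{(n)}$ of degree $g_1^{-1}\cdots g_n^{-1}$ and hence has dual degree $g_n\cdots g_1$, while its monoidal degree is $g_1\cdots g_n$. The comparison you need is therefore between the conditions $g_1\cdots g_n=e$ and $g_n\cdots g_1=e$, and these cut out \emph{different} subsets of $G^n$ for nonabelian $G$ and $n\geq3$: if $g_1g_2\neq g_2g_1$ and $g_3=(g_1g_2)^{-1}$, then $g_1g_2g_3=e$ but $g_3g_2g_1=(g_1g_2)^{-1}(g_2g_1)\neq e$. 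The identity you invoke, $g_1\cdots g_n=e\iff g_n^{-1}\cdots g_1^{-1}=e$, compares a product with its own inverse rather than with the reversed product, so it does not establish the insensitivity you claim. The repair is short: (i) the differentials preserve the $e$-component for the coaction actually induced from $(\La_+^{(n)})^*$, so that is the coaction with respect to which $\Omega^*(\La,p)_e^{G}$ must be read as a subcomplex; and (ii) the flip $v_1\ot\cdots\ot v_n\mapsto v_n\ot\cdots\ot v_1$ is a $G$-equivariant isomorphism carrying the monoidal $e$-component onto the dual one, so the two candidates have the same dimension and the same dimension of $G$-invariants in each internal degree; since the counts of \S\ref{sub:omega-inv} only see orbits, stabilizers and dimensions, every later computation is unaffected. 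With your parenthetical justification replaced by this flip argument, the proof is complete.
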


\begin{fact}[The dimension of  \texorpdfstring{$\Omega^n(\La,p)^G_e$}{O}.]\label{sub:omega-inv} 
Let $V$ be a $\ku$-linear representation of a finite group $G$ and let $V^G$ denote the space of $G$-invariant elements of $V$. Recall that, throughout this paper, the characteristic of $\ku$ is $0$. If $V_G=\ku\ot_{\ku G}V$, then the linear map $\phi:V^G\to V_G$ given by $\phi(v)=1\ot v$ is an isomorphism. Its inverse maps  $1\ot v$ to $\frac{1}{|G|}\sum_{g\in G}{}^gv$.

Let $\cX$ be a $G$-set. We shall denote the orbit and the stabilizer of $x\in \cX$ by $[x]$ and $G_x$, respectively. The set of all orbits of $\cX$ will be denoted $\cX/G$. We fix a set of representatives $\mathcal{R}$ of $\cX/G$, that is a subset of $\cX$ so that any orbit contains a unique element in $\mathcal{R}$. 

By definition a linear representation $V$ of $G$ is called $\cX$-graded provided that it is endowed with a decomposition $V:=\oplus_{x\in \cX}V_x$ such that $^gV_x\subseteq V_{g\cdot x}$, for any $g\in G$ and $x\in \cX$. For example, any Yetter-Drinfeld $\ku G$-module may be seen as an $\cX$-graded module, by taking the $G$-set $\cX$ to be $G$ with the adjoint action. Note that $V_x$ is a $\ku G_x$-module. If $x\in\R$ we define $V_{[x]}:=\oplus_{y\in [x]}V_y$. Clearly,  $V_{[x]}$ is an $\cX$-graded representation of $G$ and we have
$$
V_{[x]}=\ku G\ot{}_{\ku G_x}V_x.
$$
Moreover, since the $\ku$-linear transformation $\phi$ is an isomorphism we get
\begin{align*}\textstyle
V^G=\bigoplus\limits_{x\in\R}V_{[x]}^G\simeq \bigoplus\limits_{x\in\R}\ku\ot{}_{\ku G}{\;}(\ku G\ot{}_{\ku G_x} V_x)\simeq\bigoplus\limits_{x\in\R} \ku\ot{}_{\ku G_x} V_x\simeq\bigoplus\limits_{x\in\R}V_x^{G_x}.
\end{align*}
Hence for any $\cX$-graded representation $V$ we have $$\dim V^G=\sum_{x\in\mathcal{R}}\dim V_x^{G_x}.$$ We are going to use this relation to compute the dimension  of  $\Omega^n(\La,p)_e^G$, where $\La$ is a finite dimensional graded algebra in the category of Yetter-Drinfeld $\ku G$-modules. For, we introduce some more notation.

Let $\mathbb{N}^n[p]$ denote the set of all $n$-tuples $\lp=(p_1,\dots , p_n)$ in $\N^n$  such that all $p_i$ are positive and $\sum_{i=1}^np_i=p$. For a graded vector space $V=\oplus_{n\in\N}V_n$ and an $n$-tuple $\lp=(p_1,\dots,p_n)$ as above let  $V_{\lp}$ denote $V_{p_1}\ot \cdots \ot V_{p_n}$. 
Furthermore, we assume that each $V_n=\oplus_{x\in G}V_{n,x}$ is an  Yetter-Drinfeld $\ku G$-module and we use the notation $V_{\lp,\lx}:=V_{p_1,x_1}\ot\cdots\ot V_{p_n,x_n}$, for any $\lp\in \N^n$ and $\lx\in G^n$.

Let $G$ act on $G^n$ by ${}^g(x_1,\dots,x_n)=(gx_1g^{-1},\dots,gx_ng^{-1})$. Obviously $\cX^{(n)}=\{\lx\in G^n\mid x_1\cdots x_n=e\}$  is a $G$-subset of $G^n$ and the homogeneous component of degree $e$ of $V^{(n)}_p:=\oplus_{\lp\in\N[p]}V_{\lp}$ is the subspace
\begin{align}\label{omega-e}\textstyle
\big(V^{(n)}_p\big)_e=\bigoplus\limits_{\lx\in \cX^{(n)}}\bigoplus\limits_{\lp\in N^n[p]}V_{\lp,\lx}.
\end{align}
We now take $V$ to be the dual of $\La_+$, regarded as a graded object in $\ydg$. Thus,
$$
\dim\Omega^n(\La,p)_e^{G}=\sum_{\lx\in\R}\sum_{\lp\in\N^n[p]}\dim (V_{\lp,\lx})^{G_{\lx}}.
$$
In this relation $\R$ and $G_{\lx}$ denote a set of representatives for $\cX^{(n)}/G$ and the stabilizer of $\lx$, respectively. Note that $G_{\lx}=\bigcap_{i=1}^n C_G(x_i)$, where $C_G(x_i)$ is the centralizer of $x_i$ in $G$.

We say that $\lp$ and $\llq$ are equivalent and we write $\lp\sim\llq$ if and only if  there is $\pi\in\mathbb{S}_n$ such that $q_i=p_{\pi(i)}$ for all $1\leq i\leq n$. Note that $V_{\lp}$ and $V_{\llq}$ are isomorphic as $G$-graded representations, provided that $\lp\sim \llq$. If  $\mathcal{P}_n(p)$ denotes the set of all increasing $n$-tuples $\llq\in\N^n[p]$, then every $\lp\in\N^n[p]$  is equivalent to a unique $\llq\in\mathcal{P}_n(p)$. An element of $\mathcal{P}_n(p)$ will be called positive $n$-partition of $p$.

Let $\cX_{\llq}^{(n)}=\{\lx\in \cX^{(n)}\mid V_{\llq,\lx}\neq0\}$. Obviously, $\cX_{\llq}^{(n)}$ is a $G$-subset of $\cX^{(n)}$. To summarize, if $\R_{\llq}=\R\cap \cX_{\llq}^{(n)}$ and $|\llq|$ is the number of tuples $\lp$ equivalent to $\llq$, then  the following formula holds true:
 \begin{align}\label{omega-inv}
\dim\Omega^n(\La,m)_e^{G}=\sum_{\llq\in\mathcal{P}_n(m)}\sum_{\lx\in\R_{\llq}}|\llq|\dim (V_{\llq,\lx})^{G_{\lx}}.
\end{align}
\end{fact}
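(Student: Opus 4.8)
The plan is to deduce \eqref{omega-inv} from the intermediate identity
\[
\dim\Omega^n(\La,p)_e^{G}=\sum_{\lx\in\R}\sum_{\lp\in\N^n[p]}\dim (V_{\lp,\lx})^{G_{\lx}},
\]
which was already obtained above by feeding the decomposition \eqref{omega-e} of the $\cX^{(n)}$-graded $\ku G$-module $\Omega^n(\La,p)_e$ into the general relation $\dim V^G=\sum_{x\in\R}\dim V_x^{G_x}$ (valid for every $\cX$-graded representation in characteristic zero), together with the observations that the diagonal $G$-action leaves the degrees $\lp$ fixed and that the stabilizer of $\lx\in\cX^{(n)}$ is $G_{\lx}=\bigcap_{i=1}^nC_G(x_i)$. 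So the only remaining task is to collapse this double sum into one indexed by the positive $n$-partitions $\llq\in\mathcal{P}_n(p)$.

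For that I would use two observations. First, unpacking the already-noted isomorphism $V_{\lp}\cong V_{\llq}$ of $G$-graded representations attached to a pair $\lp\sim\llq$: the evident permutation of tensor factors realizing it is a $\ku G$-linear bijection sending $V_{\lp,\lx}$ onto $V_{\llq,\pi(\lx)}$, where $\pi(\lx)$ is $\lx$ with its entries permuted accordingly; since $G_{\lx}=\bigcap_iC_G(x_i)$ depends only on the multiset of the $x_i$ we have $G_{\lx}=G_{\pi(\lx)}$, so $\dim(V_{\lp,\lx})^{G_{\lx}}=\dim(V_{\llq,\pi(\lx)})^{G_{\pi(\lx)}}$. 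Second, for a fixed $\llq$ the sum $\sum_{\lx}\dim(V_{\llq,\lx})^{G_{\lx}}$ over one representative of each $G$-orbit in $\cX^{(n)}$ does not depend on the chosen representatives, because acting by $g$ gives a $\ku$-isomorphism $V_{\llq,\lx}\to V_{\llq,{}^g\lx}$ intertwining $G_{\lx}$ with $gG_{\lx}g^{-1}$. Now the $\mathbb{S}_n$-action permuting the entries of $\cX^{(n)}$ commutes with the $G$-action by simultaneous conjugation, so $\pi(\R)$ is again a full set of orbit representatives; applying the first observation to each summand and then the second yields $\sum_{\lx\in\R}\dim(V_{\lp,\lx})^{G_{\lx}}=\sum_{\lx\in\R}\dim(V_{\llq,\lx})^{G_{\lx}}$ whenever $\lp\sim\llq$. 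Grouping the tuples $\lp\in\N^n[p]$ by the unique $\llq\in\mathcal{P}_n(p)$ to which they are equivalent (each such class having exactly $|\llq|$ members), and then discarding the vanishing summands so that $\R$ is replaced by $\R_{\llq}=\R\cap\cX_{\llq}^{(n)}$, produces exactly \eqref{omega-inv}.

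The only delicate point is the bookkeeping in this last reorganization: one must keep the group-labels $\lx$ moving in lockstep with the degrees $\lp$ under a permutation of tensor factors, use that the permutation and conjugation actions commute so that orbit representatives are carried to orbit representatives, and invoke the representative-independence of each partition's contribution. All of this is routine; the real content, namely the equality $\dim V^G=\sum_{x\in\R}\dim V_x^{G_x}$ in characteristic zero and the decomposition \eqref{omega-e}, is already in place.
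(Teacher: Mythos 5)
There is a genuine gap at the point where you transport the sum over $\R$ from $\lp$ to $\llq$. Your re-indexing rests on the claim that permuting the entries of a tuple gives an $\mathbb{S}_n$-action on $\cX^{(n)}$ commuting with simultaneous conjugation, so that $\pi(\R)$ is again a set of orbit representatives. But $\cX^{(n)}=\{\lx\in G^n\mid x_1\cdots x_n=e\}$ is \emph{not} stable under permutation of coordinates when $G$ is non-abelian: for $n=3$, if $x_1x_2x_3=e$ then $x_2x_1x_3=e$ only when $x_1$ and $x_2$ commute. So $\pi(\lx)$ may fail to lie in $\cX^{(n)}$ at all, $\pi(\R)$ need not consist of orbit representatives, and the term-by-term matching $\dim(V_{\lp,\lx})^{G_{\lx}}=\dim(V_{\llq,\pi(\lx)})^{G_{\pi(\lx)}}$ no longer connects to the decomposition \eqref{omega-e} of the degree-$e$ component. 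Equivalently, the naive flip $V_{\lp}\to V_{\llq}$ is $G$-equivariant for the diagonal action but is \emph{not} a morphism of $G$-graded modules (the $G$-degree of a tensor monomial is the ordered product of the degrees of its factors), so it does not restrict to the $e$-components whose invariants are being counted. This failure occurs precisely in the situation the formula is used for, namely $G=\Si$ and $n\geq 3$.

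The statement itself is correct, and the repair is the one the displayed assertion ``$V_{\lp}$ and $V_{\llq}$ are isomorphic as $G$-graded representations'' is really invoking: replace the flip by the braiding of $\ydg$. For an adjacent transposition the braiding sends $v\ot w\mapsto {}^{x_i}w\ot v$ and, on labels, $(\dots,x_i,x_{i+1},\dots)\mapsto(\dots,x_ix_{i+1}x_i^{-1},x_i,\dots)$; this \emph{does} preserve the condition $x_1\cdots x_n=e$, commutes with simultaneous conjugation, leaves the stabilizer $G_{\lx}=\bigcap_i C_G(x_i)$ unchanged, and maps $V_{\lp,\lx}$ isomorphically and $G_{\lx}$-equivariantly onto the corresponding component of $V_{\llq}$. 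Composing such braidings for a reduced word of the permutation gives an isomorphism in $\ydg$ between $V_{\lp}$ and $V_{\llq}$, hence an isomorphism of $G$-modules between their $e$-components, and therefore $\sum_{\lx\in\R}\dim(V_{\lp,\lx})^{G_{\lx}}=\sum_{\lx\in\R}\dim(V_{\llq,\lx})^{G_{\lx}}$ for $\lp\sim\llq$. With this substitute for your first and third observations, the remaining bookkeeping in your proposal (representative-independence within an orbit, grouping the $\lp$'s into classes of size $|\llq|$, and discarding the vanishing summands so that $\R$ is replaced by $\R_{\llq}$) is correct and yields \eqref{omega-inv}.
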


\begin{fact}[The Cartan-Eilenberg spectral sequence of a twisted tensor product.]\label{subsec:CE}
The main tool that we use for the computation of the ring $E(B)$ is the Cartan-Eilenberg spectral sequence \cite[Theorem XVI.6.1]{CE}. For the variant of this spectral sequence that we need, the reader is referred to  \cite[\S5.3]{ginzburgkumar}. 

A subalgebra $A$ of $B$ is called normal if and only if $A_+B=BA_+$. Thus, for a normal subalgebra $A$ of $B$ we can define the quotient algebra $\overline{B}:=B/A_+B$. Under the additional assumption that $B$ is a flat  right $A$-module, it follows that $\overline{B}$ acts on $E_q(A)$, and we have a multiplicative spectral sequence: 
\begin{equation}\label{eq:CE} 
 E_2^{p,q}=\Ext_{\overline{B}}^p\big(\ku,E_q(A)\big)\Longrightarrow E_{p+q}(B).
\end{equation}
For applying the above spectral sequence to a specific extension $A\subseteq B$, we need an explicit computation of the $\overline{B}$-action  on the $\Ext$ groups $E_q(A)$. 
Since $B$ is flat over $A$, 
$$
\Ext_A^q(\ku,\ku)\simeq\Ext_B^q(B\ot_A\ku,\ku)\simeq\Ext_B^q(\overline{B},\ku).
$$
By the proof of \cite[Lemma 5.2.1]{ginzburgkumar}, it follows that $E_q(A)$ is a left $\overline{B}$-module with respect to the action induced by the right multiplication of $\overline{B}$ on itself. In conclusion, to describe this action we have to consider an injective $B$-resolution $0\rightarrow\ku\rightarrow I^*$ of $\ku$ and then to transport the $\overline{B}$-module structure of $H^q(\overline{B},I^*)$ on $E							_q(A)$ via the above identification.
In the following lemma we prove that, alternatively, we may work with a special projective $B$-resolution of $\overline{B}$.
\end{fact}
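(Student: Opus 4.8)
I read the announced lemma as asserting that the $\overline{B}$-module $E_q(A)$ appearing in the Cartan--Eilenberg spectral sequence \eqref{eq:CE} can be computed from a projective $B$-resolution of $\overline{B}$ obtained by inducing up the normalized bar resolution of $A$, with a right $\overline{B}$-action built from the twisting liftings $\tau^{(m)}$. To make this precise I would first observe that $A$ is a normal subalgebra of $B$: since $B=A\ot_\sigma R$ and $\sigma(R\ot A_+)=A_+\ot R$, one has $A_+B=A_+\ot R=BA_+$, so $\overline{B}=B/A_+B$ is an algebra and the canonical map $B\ot_A\ku\to\overline{B}$ (where $\ku=A/A_+$) is an isomorphism of $(B,\overline{B})$-bimodules. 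Invertibility of $\sigma$ and its compatibility with $m_A$ make $B$ a free right $A$-module on the chosen basis $\{e_1,\dots,e_n\}$ of $R$; hence $B\ot_A(-)$ is exact, and applying it to $P_*(A)\to\ku$ gives a resolution $Q_*:=B\ot_A P_*(A)\to\overline{B}$ by free left $B$-modules, with $Q_m\cong B\ot A_+^{(m)}$.

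The heart of the argument is to equip $Q_*$ with a right $\overline{B}=R$-action lifting the right multiplication of $\overline{B}$ on itself. For $b\in B$, $\xi\in A_+^{(m)}$ and $r\in R$ I would set
\[
(b\ot\xi)\cdot r:=\sum_j\big(b\cdot(1_A\ot e_j)\big)\ot\xi_j,\qquad\text{where}\qquad\tau^{(m)}_+(\xi\ot r)=\sum_j e_j\ot\xi_j
\]
(so the $\xi_j\in A_+^{(m)}$ are read off from \eqref{Tau_n}, and the products $b\cdot(1_A\ot e_j)$ are taken in $B$). Using associativity of $B$, the structure relations of $A\ot_\sigma R$, the compatibilities of $\sigma$ with $m_A$ and $m_R$, and the fact that $\widetilde{\sigma}$ and $\widetilde{\tau}$ are mutually inverse (all from \S\ref{subsec:TTP}), one checks in turn that this is a well-defined right $R$-action; that it commutes with the left $B$-action, so each $Q_m$ is a $(B,\overline{B})$-bimodule; that every face $d_m^i$ of the bar differential is right $R$-linear --- for $d_m^0$ this reduces to the identity $b\,\xi\,e_i=\sum_j\big(b\cdot(1_A\ot e_j)\big)\,\tau_{ji}(\xi)$ in $B$, and for $i\geq1$ to the hexagon axiom $\sigma(\id\ot m_A)=(m_A\ot\id)(\id\ot\sigma)(\sigma\ot\id)$; and that for $m=0$ the action is plain right multiplication in $Q_0=B$, so the augmentation $Q_0\twoheadrightarrow\overline{B}$ is a bimodule map. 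Hence $Q_*\to\overline{B}$ is a resolution in the category of $(B,\overline{B})$-bimodules whose terms are projective as left $B$-modules.

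It then remains to identify the resulting module structure. The adjunction $\Hom_B(B\ot_A P_*,\ku)\cong\Hom_A(P_*,\ku)$ identifies $\Hom_B(Q_*,\ku)$ with the normalized bar complex $\Omega^{*}(A)$ of \S\ref{SubSecBarResolution}, whose cohomology is $E_*(A)$, and the right $\overline{B}$-action on $Q_*$ endows this complex with a left $\overline{B}$-action $(r\cdot f)(q):=f(q\cdot r)$; since $Q_*\to\overline{B}$ is a left-$B$-projective resolution, $H^q\big(\Hom_B(Q_*,\ku)\big)\cong\Ext_B^q(\overline{B},\ku)$ as left $\overline{B}$-modules. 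Now the $\overline{B}$-module structure on $\Ext_B^q(\overline{B},\ku)$ is intrinsic, so the structure obtained in \S\ref{subsec:CE} from an injective $B$-resolution of $\ku$ (following \cite[\S5.2]{ginzburgkumar}) and the one just obtained from $Q_*$ coincide, both being induced by the right multiplication of $\overline{B}$ on itself; a routine comparison-of-resolutions argument (functoriality of $\Ext$ and homotopy-uniqueness of resolutions) makes this precise, and transporting along $E_q(A)=\Ext_A^q(\ku,\ku)\cong\Ext_B^q(\overline{B},\ku)$ yields the asserted description.

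The step I expect to be the genuine obstacle is the verification in the second paragraph that the twisted right action is compatible with \emph{every} face of the bar differential: this is exactly the point at which the twisting, as opposed to an ordinary tensor product, must be used, and carrying it out requires unwinding the recursion \eqref{eq:sigma_recurence} and the explicit formulas \eqref{Sigma_n}--\eqref{Tau_n} against the two compatibility axioms of \S\ref{subsec:TTP}. Everything else --- exactness, the adjunction, and the comparison of module structures --- is formal once that action is in hand.
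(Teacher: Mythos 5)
Your proposal is correct and is essentially the paper's own argument: inducing the normalized bar resolution of $A$ up to $B$ and twisting the right $R$-action by $\tau_+^{(q)}$ reproduces exactly the $(B,R)$-bimodule resolution $A\ot R\ot A_+^{(*)}$ of Lemma \ref{le:B R bimod resol explicitly} (the paper verifies the same compatibilities by graphical calculus), and the resulting action on $\Hom_B(-,\ku)\cong\Ou{*}{A}$ is precisely the formula \eqref{ec:action}. The only immaterial difference is that you identify this module structure with the one coming from an injective resolution via functoriality and homotopy-uniqueness of lifts of right multiplication, whereas Lemma \ref{le:B R bimod resol} does it by running Weibel's balancing double complex with the $\overline{B}$-structures carried along.
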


\begin{lema}\label{le:B R bimod resol}
Let $P_*\rightarrow \overline{B}\rightarrow 0$ be a  resolution of $\overline{B}$ by projective left $B$-modules. We assume that each $P_*$ is a $(B,\overline{B})$-bimodule and that the maps that define the resolution are right $\overline{B}$-linear. Then there is an isomorphism of left $\overline{B}$-modules
$E_q(A)\simeq H^q(\Hom_B(P_*,\ku))$.
\end{lema}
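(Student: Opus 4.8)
The statement is essentially a comparison-of-resolutions argument, so the plan is to exploit the standard fact that $\Ext$ can be computed with \emph{any} projective resolution together with the extra bimodule structure carried by $P_*$. First I would recall the chain of isomorphisms already established in \S\ref{subsec:CE}, namely $E_q(A)=\Ext_A^q(\ku,\ku)\simeq\Ext_B^q(\overline B,\ku)$, valid because $B$ is flat over $A$ and $\overline B=B\ot_A\ku$. Under this identification the left $\overline B$-module structure on $E_q(A)$ is, by the proof of \cite[Lemma 5.2.1]{ginzburgkumar}, the one induced by the \emph{right} multiplication of $\overline B$ on itself; so the whole problem is to compute $\Ext_B^q(\overline B,\ku)$ as a right-$\overline B$-module-in-the-source, equivalently as a left $\overline B$-module on cohomology.

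\textbf{Key steps.} Step one: since $P_*\to\overline B\to0$ is a resolution of $\overline B$ by projective left $B$-modules, it computes $\Ext_B^*(\overline B,\ku)$, i.e. $\Ext_B^q(\overline B,\ku)\cong H^q\big(\Hom_B(P_*,\ku)\big)$ as $\ku$-vector spaces. Step two: upgrade this to a $\overline B$-module isomorphism. The right $\overline B$-action on $P_*$ makes each $\Hom_B(P_i,\ku)$ a left $\overline B$-module via $(\xi\cdot f)(p)=f(p\cdot\xi)$ for $\xi\in\overline B$, $p\in P_i$, $f\in\Hom_B(P_i,\ku)$; here one uses that $\ku$ is the trivial module and that $p\mapsto p\cdot\xi$ is left $B$-linear because the bimodule actions commute. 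Because the differentials of $P_*$ are right $\overline B$-linear, the induced differentials on $\Hom_B(P_*,\ku)$ are left $\overline B$-linear, so the cohomology $H^q\big(\Hom_B(P_*,\ku)\big)$ inherits a left $\overline B$-module structure. Step three: check that this module structure agrees with the one transported from $\Ext_B^q(\overline B,\ku)$ under the abstract identification. For this I would use the comparison theorem for projective resolutions: any two projective resolutions of $\overline B$ — in particular $P_*$ and the bar-type resolution implicit in \S\ref{subsec:CE} — are chain-homotopy equivalent over $B$, and since the bimodule structures here come from functoriality of $\Hom_B(-,\ku)$ in the covariant ``external'' $\overline B$-variable, the comparison maps can be chosen compatibly, so the induced isomorphism on $H^q$ is $\overline B$-linear. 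Combining steps one through three with the identification $E_q(A)\simeq\Ext_B^q(\overline B,\ku)$ of \S\ref{subsec:CE} gives the desired isomorphism of left $\overline B$-modules $E_q(A)\simeq H^q\big(\Hom_B(P_*,\ku)\big)$.

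\textbf{Main obstacle.} The only genuinely delicate point is step three: verifying that the $\overline B$-action read off from the explicit complex $\Hom_B(P_*,\ku)$ really coincides with the one that the Cartan–Eilenberg machinery (via \cite[Lemma 5.2.1]{ginzburgkumar}) puts on $E_q(A)$, rather than, say, a twisted or opposite version. The cleanest way around this is to note that both module structures are functorial in exactly the same external variable — the right $\overline B$-action on the object $\overline B$ being resolved — so the comparison isomorphism between the two resolutions, which can be taken to be $(B,\overline B)$-bilinear up to homotopy by the bimodule comparison theorem, automatically intertwines the two actions. Once this naturality is spelled out, the rest of the argument is the routine observation that $\Hom_B(-,\ku)$ sends the right $\overline B$-module structure on $P_*$ to a left $\overline B$-module structure on the cochain complex, and that taking cohomology is functorial.
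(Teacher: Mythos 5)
Your proposal is correct in substance and lands very close to the paper's own argument, but the two proofs package the key point differently, and one of your justifications needs repair. The paper runs the balancing (Cartan--Eilenberg) double complex explicitly: it observes that every object in the proof of Weibel's Theorem 2.7.6 --- in particular the total complex of $\Hom_B(P_*,I^*)$ and the two augmentations $\Hom_B(\overline{B},I^*)\rightarrow\Tot(\Hom_B(P_*,I^*))\leftarrow\Hom_B(P_*,\ku)$ --- carries a left $\overline{B}$-module structure coming from the right $\overline{B}$-action on $P_*$ and on $\overline{B}$, and that these quasi-isomorphisms are $\overline{B}$-linear. You instead reduce to functoriality of $\Ext_B^*(-,\ku)$ in the first variable plus the comparison theorem; that is a legitimate alternative and arguably makes the ``why'' more transparent. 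The one step you should not phrase as you did is the appeal to a ``bimodule comparison theorem'': $P_*$ is only assumed projective as a complex of \emph{left} $B$-modules, not as $(B,\overline{B})$-bimodules (i.e.\ not as $B\ot\overline{B}^{op}$-modules), so there is no comparison theorem producing a bimodule-linear homotopy equivalence between $P_*$ and another resolution. The correct way to finish --- which you essentially have in your ``functorial in the same external variable'' remark --- is elementwise: for each $\xi\in\overline{B}$, right multiplication by $\xi$ on $P_*$ is a chain map of complexes of left $B$-modules lifting $\rho_\xi:\overline{B}\to\overline{B}$ (this uses that the differentials \emph{and the augmentation} are right $\overline{B}$-linear), so by uniqueness of lifts up to homotopy the induced endomorphism of $H^q(\Hom_B(P_*,\ku))$ is exactly $\Ext_B^q(\rho_\xi,\ku)$, which is the Ginzburg--Kumar action; no bimodule-linear comparison map is ever needed. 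With that substitution your proof is complete and equivalent to the paper's.
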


\begin{proof}
Note that $\Hom_B(P_*,\ku)$ is a complex of left $\overline{B}$-modules with the action induced by the right $\overline{B}$-action on $P_*$. We conclude by remarking that all objects used in the proof of \cite[Theorem 2.7.6]{We} inherit a left $\overline{B}$-module structure and the maps between them are compatible with these actions.
Thus, proceeding as in the proof of that theorem, one shows that there are $\overline{B}$-linear quasi-isomorphisms
\[
\Hom_B(\overline{B},I^*)\rightarrow\Tot(\Hom_B(P_*,I^*)) \leftarrow\Hom_B(P_*,\ku).\qedhere
\]
\end{proof}

In the remaining part of this section, we shall specialize the spectral sequence \eqref{eq:CE} to the particular case when $B=A\ot_\sigma R$, where $A$ is a connected algebra and $R$ is a graded algebra. The fact that $R$ is not necessarily connected will allow as to work with an arbitrary algebra $R$ which is trivially graded, that is $R=R_0$. Furthermore, we assume that the  twisting map $\sigma$ is invertible  and  maps $R\ot A_+$ to $A_+\ot R$.  We retain the notation from \S\ref{subsec:B is a twist}. Thus,  $\sigma_+$ denotes  the restriction of $\sigma$ to $R\ot A_+$ and for the lifting of $\sigma_+$ to $R\ot A_+^{(q)}$,  defined by the recurrence relation  \eqref{eq:sigma_recurence}, we shall use the notation $\sigma^{(q)}$. The inverses of these maps will be denoted by $\tau$, $\tau_+$ and $\tau_+^{(q)}$, respectively.

We regard $A$ and $R$ as subalgebras of $B$ via the inclusions $x\overset{i_A}{\longmapsto} x\ot 1$ and $r\overset{i_R}{\longmapsto} 1\ot r$. We first check that $A$ is normal in $B$ and that $B$ is a flat right $A$-module.
Since by assumption  $\sigma$ and $\sigma_+$ are bijective, it is easy to see that  $AR=RA$ and $A_+R=RA_+$. Thus,
\[
 A_+B=A_+AR=A_+R=RA_+=RAA_+=ARA_+=BA_+,
\]
so $A$ is normal in $B$. Note that $\overline{B}=B/A_+B=B/A_+R\cong R$.

Under the conditions that we imposed on the twisting map, the right $A$-module $B$ is not only flat, but free. Indeed, the twisting map $\sigma: R\ot A\to A\ot R$ is an isomorphism of right $A$-modules, where $R\ot A$ and $A\ot R$ are regarded as right $A$-modules via $m_A$ and  by restriction of scalars with respect to the inclusion of rings $A\subseteq A\ot_\sigma R=B$.

Let us now describe the action of $\overline{B}\cong R$ on $E_q(A)$. 
When  working with twisted tensor products it is much more convenient to use in computations the  graphical representation of morphisms, similar to that one from the theory of braided monoidal categories, see \cite[Chapter XIV.1]{Ka}. This method is also explained in \cite[Section 4]{JPS}, where it is applied to the investigation of Koszulity of graded twisted tensor products. As general rules, the identity of a vector space will be  drawn as a sole string; the tensor  product and the composition of two morphisms will be represented by horizontal and vertical juxtaposition, respectively. 

Following the above references, we draw the string diagrams representing the most important maps that we work with as in the picture below.  
\begin{equation*}   
 \begin{array}{l} 
   \includegraphics{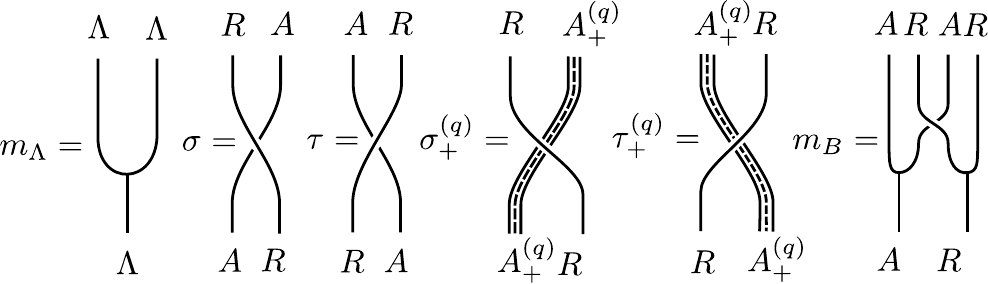} 
 \end{array}
\end{equation*}
In the first three diagrams we introduce the graphical representation of  the multiplication of an algebra $\La$, the twisting map $\sigma$ between $A$ and $R$ and its inverse $\tau$. The next two diagrams represent  $\sigma_+^{(q)}$ and $\tau_+^{(q)}$. Note  the special representation of $\id_{A^{(q)}_+}$ as a `stripe' which replaces the corresponding $q$ strings. 
For the definition of the multiplication  $m_{A\ot_\sigma R}$ see the fifth diagram. In the following two equations  the  compatibility relations between $\sigma$ and the multiplication maps are translated in diagrammatic language. 
\begin{equation*}
 \begin{array}{l}  
   \includegraphics{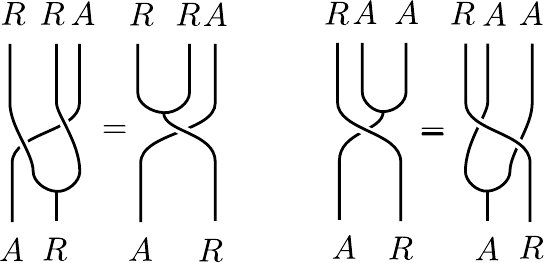} 
 \end{array}
\end{equation*}
Turning back to the Cartan-Eilenberg spectral sequence of a twisted tensor product, we are going to construct a projective resolution of $R$ as in Lemma \ref{le:B R bimod resol}, using the complex $P_*(A)\ot R$.  We endow $A\ot A_+^{(q)}\ot R$ with a $(B,R)$-bimodule structure, where the right and left actions are given  by  $m_R$ and $\mu_q$. For the definition of $\mu_q$  see the first diagram in the figure below. 
\begin{equation}\label{Actions}
 \begin{array}{c}
   \includegraphics{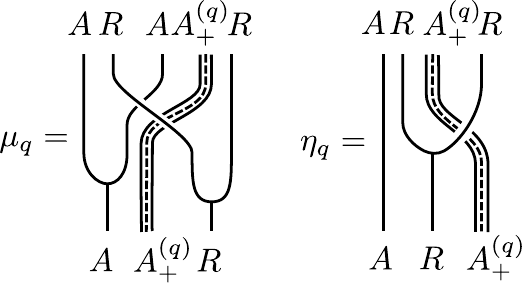}
 \end{array}
\end{equation}
We claim that  $P_*(A)\ot R$ is an exact complex in the category of $(B,R)$-bimodules. Obviously, the complex is exact and the differentials are morphisms of right $R$-modules, since by the definition of the normalized bar resolution  $d_q=\sum_{i=0}^{q-1}(-1)^id_q^i$, cf. \S\ref{SubSecBarResolution}. The fact that $d_q\ot \id_R$ is left $B$-linear is proved by  diagrammatic computation in the next figure. In the first equation we show that $d_q^0\ot \id_R$ is a morphism of left $B$-modules, using first that the product in $A$ is associative and then the compatibility between $m_A$ and $\sigma$. In the second equation we check that $d_q^i\ot \id_R$ is left $B$-linear, for any $0<i<q$.   Here, we need the same compatibility relation once again. 
\begin{equation*} 
 \begin{array}{c} 
 \includegraphics{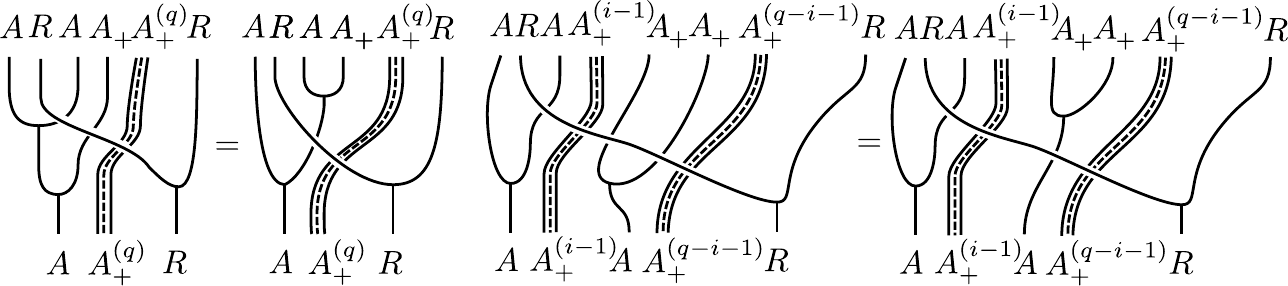} 
\end{array}
\end{equation*}
It remains to show that $A\ot R\ot A_+^{(q)}$ is free as a left $B$-module. Let us remark that $A\ot R\ot A_+^{(q)}$ is a $(B,R)$-bimodule with respect to  the left and right actions given by $m_B$ and $\eta_q$. The latter map is defined in the second diagram of \eqref{Actions}.  

Moreover, by the computation below it follows that the $(B,R)$-bimodule map $\id_A\ot\;\sigma_+^{(q)}:A\ot R\ot A_+^{(q)}\to A\ot A_+^{(q)}\ot R$ is an isomorphism.  In the first equation we prove that this map is a morphism of left $B$-modules using that $\sigma$  and $m_R$ are compatible. The same property and the fact that $\sigma_+^{(q)}$ and $\tau_+^{(q)}$ are inverses each other allow us to conclude that $\id_A\ot\;\sigma_+^{(q)}$ is right $R$-linear as well. 
\begin{center}
 \includegraphics{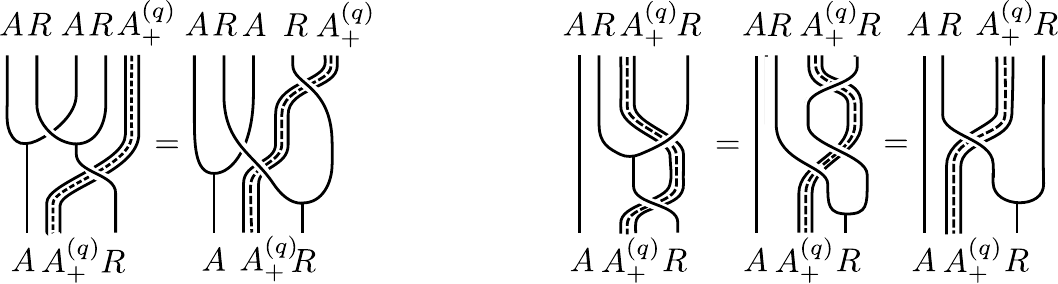}
\end{center}
In particular we deduce that $P_*(A)\ot R$ is an exact complex in the category of $(B,R)$-bimodules which is a resolution of $R$ by free $B$-modules too. In order to get a simpler form of this resolution we define the map $\partial_q=\sum_{i=0}^{q-1}(-1)^i\partial_q^i$, where for every $0\leq i\leq q-1$ we let $\partial_q^i$ to be given by
\[
 \partial_q^i=\big(\id_A \ot\sigma_+^{(q)}\big)\big(d_q^i\ot \id_R\big)\big(\id_A\ot\tau_+^{(q-1)}\big).
\]
Clearly, each $\partial_q^i$ is a $(B,R)$-bimodule morphism.  Obviously $\big(A\ot R\ot A_+^{(*)},\partial_*\big)$ is another resolution of $R$ that shares the properties of $P_*(A)\ot R$. The maps $\partial_q^i$ are computed in the figure below. For future references, we gather the foregoing results in Lemma \ref{le:B R bimod resol explicitly}.
\begin{equation*}
  \begin{array}{c}
    \includegraphics{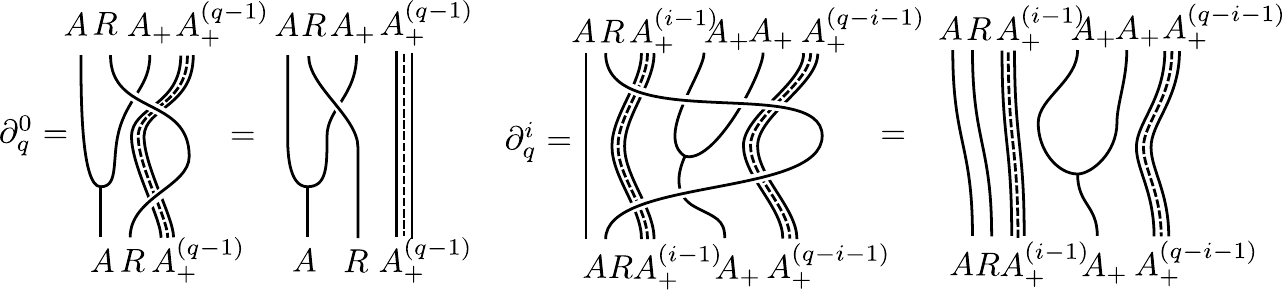}  
  \end{array}
\end{equation*}
\begin{lema}\label{le:B R bimod resol explicitly}
The complex  of  $(B,R)$-bimodules $\big(A\ot R\ot A_+^{(*)},\partial_*\big)$ is a resolution of $R$  by free left $B$-modules. In degree $q$ the differential is defined by the relation   $\partial_q=\sum_{i=0}^{q-1}(-1)^i\partial_q^i$, where
\[ 
 \partial_q^i=\left\{
 \begin{array}{ll}
  \big(m_A\ot\id_{R\ot A\ot A_+^{(q-1)}}\big)\big(\id_A\ot\, \sigma\ot \id_{\ot A_+^{(q-1)}}\big), & \text{if } i=0;\\
 \id_{R\ot A\ot A_+^{(i-1)}} \ot\,  m_A\ot\id_{ A_+^{(q-i-1)}}, &  \text{if } i>0.
 \end{array}\right.
\]
\end{lema}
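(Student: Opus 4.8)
The plan is to read the lemma off the discussion carried out in \S\ref{subsec:CE}, organised in three steps: first I would check that $\bigl(A\ot A_+^{(q)}\ot R,\,d_q\ot\id_R\bigr)$ is a resolution of $R$ by free left $B$-modules in the category of $(B,R)$-bimodules; then transport it along the isomorphisms $\id_A\ot\sigma_+^{(q)}$; and finally simplify the transported differential to the closed form in the statement. For the first step: exactness is automatic, since $P_*(A)\to A\to\ku\to 0$ is exact and $-\ot_\ku R$ is an exact functor. Each $d_q\ot\id_R$ is right $R$-linear because $d_q=\sum_{i=0}^{q-1}(-1)^i d_q^i$ uses only the multiplication of the $A$-tensorands, whereas the right $R$-action is $m_R$ on the last tensorand; left $B$-linearity of every $d_q^i\ot\id_R$ is the diagrammatic identity already verified before the statement (for $i=0$ one uses associativity of $m_A$ and its compatibility with $\sigma$, for $0<i<q$ only the latter). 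Freeness over $B$ follows from the $(B,R)$-bimodule isomorphism $\id_A\ot\sigma_+^{(q)}\colon A\ot R\ot A_+^{(q)}\to A\ot A_+^{(q)}\ot R$, since $A\ot R\ot A_+^{(q)}=B\ot A_+^{(q)}$ is manifestly $B$-free. As $\overline B=B/A_+B\cong R$, this exhibits $P_*(A)\ot R$ as a resolution of $R$ of the type required by Lemma \ref{le:B R bimod resol}.

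For the second step I would conjugate the differential $d_q\ot\id_R$ by the isomorphisms $\id_A\ot\sigma_+^{(\bullet)}$, so that $A\ot R\ot A_+^{(*)}$ acquires the differential $\partial_q=\sum_{i=0}^{q-1}(-1)^i\partial_q^i$ with $\partial_q^i$ the composite of $\id_A\ot\sigma_+^{(q)}$, $d_q^i\ot\id_R$ and $\id_A\ot\tau_+^{(q-1)}$ displayed just before the statement. Since an isomorphism of complexes preserves exactness, freeness and the $(B,R)$-bimodule structures, $\bigl(A\ot R\ot A_+^{(*)},\partial_*\bigr)$ is again a free $(B,R)$-bimodule resolution of $R$; this already proves the first assertion of the lemma.

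The last step, rewriting $\partial_q^i$ in closed form, is the only genuine computation, and I expect it to be the main obstacle --- not because it is deep but because one must keep careful track of how $\sigma_+^{(q)}$ and $\tau_+^{(q-1)}$ interact with $d_q^i$. The guiding principle is that $\sigma_+^{(q)}$ moves the $R$-factor past the $q$ copies of $A_+$ one at a time, through the recurrence \eqref{eq:sigma_recurence}, and that by $\sigma(\id_R\ot m_A)=(m_A\ot\id_R)(\id_A\ot\sigma)(\sigma\ot\id_A)$ moving $R$ past a product is the same as moving it past each factor and then multiplying. Thus for $0<i<q$, where $d_q^i$ multiplies the $i$-th and $(i{+}1)$-th copies of $A_+$, both lying on one side of $R$, the maps $\sigma_+^{(q)}$ and $\tau_+^{(q-1)}$ should cancel everywhere except at that spot, leaving $\partial_q^i=\id_{A\ot R\ot A_+^{(i-1)}}\ot\,m_A\ot\id_{A_+^{(q-i-1)}}$; for $i=0$, where $d_q^0$ multiplies the leftmost tensorand $A$, untouched by $\sigma$, with the first copy of $A_+$, one pushes $R$ past only that first copy, applies the displayed compatibility relation, and the leftover $\sigma_+^{(q-1)}$ cancels $\tau_+^{(q-1)}$, giving $\partial_q^0=\bigl(m_A\ot\id_{R\ot A_+^{(q-1)}}\bigr)\bigl(\id_A\ot\sigma\ot\id_{A_+^{(q-1)}}\bigr)$. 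These are exactly the diagrammatic identities in the figure preceding the statement, and verifying them rigorously is most transparent in the string-diagram calculus, where the manipulations become mechanical.
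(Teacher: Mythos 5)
Your proposal is correct and follows essentially the same route as the paper: tensor the normalized bar resolution $P_*(A)$ with $R$, check exactness, right $R$-linearity and left $B$-linearity of $d_*\ot\id_R$ (the latter via the compatibility of $\sigma$ with $m_A$), use the $(B,R)$-bimodule isomorphisms $\id_A\ot\sigma_+^{(q)}$ to get freeness and to transport the differential, and then simplify the conjugated maps using the recurrence for $\sigma^{(q)}$, the multiplicativity of $\sigma$, and the fact that $\tau_+^{(q-1)}$ inverts $\sigma_+^{(q-1)}$. The only difference is cosmetic --- you phrase the final computation via the recurrence/matrix description of $\sigma^{(q)}$ where the paper argues by string diagrams --- and your closed forms for $\partial_q^0$ and $\partial_q^i$ are the intended ones (correcting the slight slip $\id_{R\ot A\ot A_+^{(\cdot)}}$ versus $\id_{A\ot R\ot A_+^{(\cdot)}}$, resp. $\id_{R\ot A_+^{(q-1)}}$, in the displayed statement).
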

\begin{obs}
 In the defining relations of $\partial_q^i$, for brevity, we used the  same notation $m_A$ for  the maps induced by the product of $A$. Strictly speaking, if $i=0$, then $m_A$ stands for the restriction of the product to $A\ot A_+$. Similarly, if $i>0$, then $m_A$ denotes the restriction of the multiplication to $A_+\ot A_+$. 
\end{obs}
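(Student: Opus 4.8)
The plan is to justify the convention by reading off, from the explicit formulas in Lemma~\ref{le:B R bimod resol explicitly}, the source of each occurrence of $m_A$ inside $\partial_q^i$, and then to check that the corresponding restriction of the product of $A$ is a well-defined map of the $(B,R)$-bimodule $A\ot R\ot A_+^{(q)}$. The only algebraic input needed is that $A_+$ is a two-sided ideal of the connected graded algebra $A$, so that $A\cdot A_+\subseteq A_+$ and $A_+\cdot A_+\subseteq A_+$.

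First I would analyse the summand $i=0$. According to the lemma, $\partial_q^0$ is built by first applying $\id_A\ot\sigma\ot\id_{A_+^{(q-1)}}$, which uses that $\sigma$ carries $R\ot A_+$ into $A_+\ot R$ and therefore moves the leftmost copy of $A_+$ across $R$, placing it immediately to the right of the outer tensorand $A$. The ensuing $m_A$ multiplies exactly these two adjacent factors, so its source is $A\ot A_+$; this is the restriction of the product of $A$ announced in the remark. Since $A_+$ is an ideal, the image of this restriction lies in $A_+\subseteq A$, so the output of $\partial_q^0$ indeed sits in $A\ot R\ot A_+^{(q-1)}$, matching the target of $\partial_q$.

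Next I would treat the summands $i>0$. Here the factors to the left of positions $i$ and $i+1$ are carried along unchanged by the identity part of the formula, and $m_A$ multiplies the two consecutive copies of $A_+$ in those positions; hence its source is $A_+\ot A_+$, as claimed. Because $A_+$ is closed under multiplication, the product again lands in $A_+$, so $\partial_q^i$ has the correct target $A\ot R\ot A_+^{(q-1)}$. I do not expect any genuine obstacle: the content of the remark is purely the bookkeeping of which tensorands are multiplied, and the ideal property of $A_+$ is precisely what makes both restrictions of $m_A$ well defined and compatible with the $(B,R)$-bimodule structure established in Lemma~\ref{le:B R bimod resol explicitly}.
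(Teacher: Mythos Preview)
Your explanation is correct in content, but note that this statement is a \emph{Remark}, not a theorem, and the paper offers no proof of it whatsoever: it is simply a one-sentence clarification of the notational convention adopted in the preceding lemma. What you have written is a careful unpacking of why that convention is sensible, tracing through the formulas of Lemma~\ref{le:B R bimod resol explicitly} and invoking that $A_+$ is a two-sided ideal in the connected graded algebra $A$; this is all accurate, but it goes well beyond what the paper does, which is nothing.

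One small point: in the case $i=0$ you observe that the image of $m_A\vert_{A\otimes A_+}$ lies in $A_+$, which is true but unnecessary---the first tensor factor of the target $A\otimes R\otimes A_+^{(q-1)}$ is just $A$, so no constraint on the image is required there. The ideal property of $A_+$ is genuinely needed only for $i>0$, to ensure that the product of two elements of $A_+$ remains in $A_+$.
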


We have just seen that $E_q(A)$ is the $q$th cohomology group of the complex obtained by applying the functor $\Hom_B(-,\ku)$ to the resolution constructed in Lemma \ref{le:B R bimod resol explicitly}. For any $q$, the resulting vector space  $\Hom_B(B\ot A_+^{(q)},\ku)$ is canonically isomorphic to $\Hom_\ku( A_+^{(q)},\ku)$. Through this identification, the differential $\Hom_B(\partial_{q+1},\ku)$ corresponds  to $-\lD^q$,
where $\lD^q$ denotes the  arrow of  $\Ou{*}{A}$ in degree $q$. 

Via the same identification as above, we get a left $R$-module structure on the $\ku$-dual of $A_+^{(q)}$. By a straightforward computation we can see that, for any $r\in R$, $x\in  A_+^{(q)}$  and  any linear form $f$ on $ A_+^{(q)}$,
\begin{equation}\label{ec:action}
 (r\cdot f)(x)=\sum_{i=1}^m r_{i}\cdot f(x_{i})=\sum_{i=1}^m \varepsilon_R(r_{i})f(x_{i}),
\end{equation}
where $\tau_+^{(q)}(x\ot r)=\sum r_{i}\ot  x_{i}$ and we denoted the augmentation map of $R$ by $\varepsilon_R$.  

We now assume that $A$ and $R$ are finite dimensional. Thus, taking $V$ to be  the linear dual of $A_+$, we can use the complex \eqref{NormalizedBarComplex} for computing $E(A)$. The $R$-module structure of $\Ou{*}{A}$ corresponds to a left $R$-action on this complex. In order to write explicitly the formula for the $R$-action on $V^{(q)}$, we fix a linear basis of $R$, say $\{e_1,\dots,e_n\}$, such that $e_1=1_R$ and all other elements are in $R_+$. 
Using the relation \eqref{Tau_n} we easily see that, for any $f_1,\dots,f_q \in V$,
\begin{equation}\label{ec:ActionOnV^(n)}
e_{i} \cdot(f_1\ot\cdots\ot f_q)=\sum_{i_1,\dots,i_{q-1}=1}^n f_1 \tau_{1i_1}\ot f_2\tau_{i_1i_2}\ot\cdots \ot f_{q-1}\tau_{i_{q-2}i_{q-1}}\ot f_q\tau_{j_{q-1}i},
\end{equation} 
as $e_{j}\in R_+$ for any $j>1$. In conclusion, we have proved the following.  

\begin{theorem}\label{te: CE}
 Let $B=A\ot_\sigma R$ be a twisted tensor product, where $A$ is connected and $R$ is graded, but not necessarily connected. We assume that $\sigma$ is invertible and $\sigma(R\ot A_+)=A_+\ot R$.
 \begin{itemize}
  \item[(a)] The left $R$-module structure of $E(A)$ is induced by the $R$-action on  $\big(\Ou{*}{A}, \lD^*\big)$ given  in \eqref{ec:action}.
  \item[(b)] If $A$ and $R$ are finite dimensional,  then $E(A)$ is the cohomology of the complex \eqref{NormalizedBarComplex}, and the $R$-module structure  of $E(A)$ is induced by the action \eqref{ec:ActionOnV^(n)} on this complex.
  \item[(c)] There is a multiplicative spectral sequence:
  \[
   E^{pq}_2=\Ext_R^p\big(\ku,E_q(A)\big)    \Longrightarrow E_{p+q}(B).
  \]
 \end{itemize}
\end{theorem}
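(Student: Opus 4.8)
The plan is to assemble parts (a), (b), and (c) from the machinery developed in \S\ref{subsec:CE} and Lemmas \ref{le:B R bimod resol} and \ref{le:B R bimod resol explicitly}, so that the theorem becomes essentially a bookkeeping statement. First I would verify the hypotheses needed to invoke the Cartan--Eilenberg spectral sequence \eqref{eq:CE} for the inclusion $A\subseteq B=A\ot_\sigma R$: namely that $A$ is a normal subalgebra of $B$ and that $B$ is flat (in fact free) as a right $A$-module. Both of these were already checked in the discussion preceding Lemma \ref{le:B R bimod resol explicitly}, using that $\sigma$ and $\sigma_+$ are bijective and map $R\ot A_+$ to $A_+\ot R$; the upshot is $A_+B=BA_+$ and $\overline B=B/A_+B\cong R$. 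Since $\sigma\colon R\ot A\to A\ot R$ is a right $A$-module isomorphism, $B$ is even free over $A$. This gives part (c) directly: \eqref{eq:CE} specializes to $E_2^{p,q}=\Ext_R^p(\ku,E_q(A))\Rightarrow E_{p+q}(B)$, and multiplicativity is inherited from the Cartan--Eilenberg construction.

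For part (a), the point is to identify the abstract $\overline B\cong R$-action on $E_q(A)$ (induced, per \cite[Lemma 5.2.1]{ginzburgkumar}, by right multiplication of $\overline B$ on itself) with the concrete action \eqref{ec:action} on the normalized bar complex $\big(\Ou{*}{A},\lD^*\big)$. Here I would invoke Lemma \ref{le:B R bimod resol}: the $(B,R)$-bimodule resolution $\big(A\ot R\ot A_+^{(*)},\partial_*\big)$ of $R=\overline B$ by free left $B$-modules, constructed in Lemma \ref{le:B R bimod resol explicitly}, satisfies the hypotheses of that lemma, so $E_q(A)\cong H^q(\Hom_B(A\ot R\ot A_+^{(*)},\ku))$ as left $R$-modules. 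Then one uses the canonical isomorphism $\Hom_B(B\ot A_+^{(q)},\ku)\cong\Hom_\ku(A_+^{(q)},\ku)$, under which $\Hom_B(\partial_{q+1},\ku)$ becomes $-\lD^q$; transporting the right $R$-action on the resolution through this identification yields precisely formula \eqref{ec:action}, where $\tau_+^{(q)}(x\ot r)=\sum r_i\ot x_i$ and $r\cdot f = \sum \varepsilon_R(r_i)f(x_i)$. This is the straightforward computation alluded to in the text; the only subtlety is keeping track of the right $R$-action $\eta_q$ versus $m_R$ through the isomorphism $\id_A\ot\sigma_+^{(q)}$, which is exactly what was verified diagrammatically in \S\ref{subsec:CE}.

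For part (b), once $A$ and $R$ are finite-dimensional one passes from $\Ou{*}{A}$ to the complex \eqref{NormalizedBarComplex} with $V=(A_+)^*$, which computes $E(A)$ by \S\ref{SubSecBarResolution}. It then remains to rewrite the action \eqref{ec:action} in coordinates: fixing a basis $\{e_1,\dots,e_n\}$ of $R$ with $e_1=1_R$ and the rest in $R_+$, and expanding $\tau_+^{(q)}$ via \eqref{Tau_n} in terms of the endomorphisms $\tau_{ij}$ of $A$, one obtains \eqref{ec:ActionOnV^(n)}; the factors $\varepsilon_R(e_j)$ kill all terms except those involving $e_1=1_R$ in the appropriate slot, which is why only the dualized $\tau_{ij}$'s survive. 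I expect no genuine obstacle here: the whole theorem is a summary of the preceding subsection, and the main (modest) care required is the consistent translation between the bimodule/diagrammatic picture of \S\ref{subsec:CE} and the explicit cochain formulas — in particular checking that the sign $-\lD^q$ and the variance of the $R$-action come out correctly under the identification $\Hom_B(B\ot A_+^{(q)},\ku)\cong\Hom_\ku(A_+^{(q)},\ku)$.
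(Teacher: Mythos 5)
Your proposal is correct and follows essentially the same route as the paper: the theorem is stated there as a summary of the preceding subsection, whose content is exactly your plan — normality of $A$ and freeness of $B$ over $A$ via the bijectivity of $\sigma$ and $\sigma_+$, the free $(B,R)$-bimodule resolution of $\overline{B}\cong R$ from Lemma \ref{le:B R bimod resol explicitly} fed into Lemma \ref{le:B R bimod resol}, and the transport of the $R$-action through $\Hom_B\big(B\ot A_+^{(q)},\ku\big)\cong\Hom_\ku\big(A_+^{(q)},\ku\big)$ to obtain \eqref{ec:action} and, in the finite-dimensional case via \eqref{Tau_n}, the coordinate formula \eqref{ec:ActionOnV^(n)}. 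No gaps to report.
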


As a first application of Theorem \ref{te: CE}, let us take $B$ to be a twisted tensor product $B=A\ot_\sigma R$, where  $R=\ku \big\langle c\mid c^2\big\rangle$. We keep the notation from \S\ref{subsec:B is a twist} and we choose the basis $\{e_1,e_2\}=\{1,c\}$ on $R$. Thus, in this particular case, for $i=2$, the equation \eqref{ec:ActionOnV^(n)} becomes: 
\begin{equation}\label{ec:ActionDualNumbers}
 c\cdot (f_1\ot\cdots\ot f_q)=\sum_{i=1}^q f_1 \ot\cdots\ot
 f_{i-1} \ot f_i\beta\alpha\ot f_{i+1} \beta \ot\cdots\ot  f_{q} \beta. 
\end{equation}
By Example \ref{ex:Omega(R)} we identify $\Ou{*}{R,E_q(A)}$  to the complex $(C^*,d^*)$, where $C^n=E_n(A)$ and $d^n(\mf{w})= c\cdot \mf{w}$, for any $n$.  For any $p,q\geq 0$ we define
 $Z^{p,q}:=\{ \mf{w}\in E_q(A)\mid c\cdot\mf{w}=0\}$.
On the other hand, we set $B^{0,q}:=0$ and $B^{p,q}:=\{c\cdot \mf{w}\mid \mf{w}\in E_q(A)\}$, for any $p>0$ and $q\geq 0$.
\begin{cor}\label{co:CE}
If $R=\ku \big\langle c\mid c^2\big\rangle$, then there is a multiplicative spectral sequence:
  \begin{equation}\label{ec:CE}
   {E}^{pq}_2 \Longrightarrow E_{p+q}(B),
  \end{equation}
where $E^{pq}_2:=Z^{p,q}/B^{p,q}$, for any $p,q\geq 0$. 
\end{cor}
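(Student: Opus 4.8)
The plan is to obtain Corollary \ref{co:CE} as a direct specialization of Theorem \ref{te: CE}(c) to the case $R=\ku\big\langle c\mid c^2\big\rangle$, combined with the explicit description of $\Ext_R^p\big(\ku,M\big)$ for an arbitrary $R$-module $M$ coming from Example \ref{ex:Omega(R)}. The only thing to check is that the $E_2$-page of the spectral sequence of Theorem \ref{te: CE}(c) has the asserted form $E^{pq}_2=Z^{p,q}/B^{p,q}$.

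First I would recall from Theorem \ref{te: CE}(c) that there is a multiplicative spectral sequence with $E^{pq}_2=\Ext_R^p\big(\ku,E_q(A)\big)$ converging to $E_{p+q}(B)$, and that by Theorem \ref{te: CE}(a) the left $R$-module structure on $E_q(A)$ is the one induced by the action \eqref{ec:action} on the cochain complex $\big(\Ou{*}{A},\lD^*\big)$; in the present setting with the basis $\{1,c\}$ of $R$ this action is given explicitly by \eqref{ec:ActionDualNumbers}. Next I would apply Example \ref{ex:Omega(R)} with $M=E_q(A)$: since $R_+$ is one-dimensional, $\Ou{*}{R,M}$ is identified with the complex $(C^*,d^*)$ where $C^n=M$ for all $n$ and every differential is the endomorphism $D(\mf{w})=c\cdot\mf{w}$. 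Therefore $\Ext_R^0(R,M)=\Ker D$ and $\Ext_R^n(R,M)=\Ker D/\Imm D$ for $n>0$. Specializing to $M=E_q(A)$, this is precisely $Z^{p,q}$ in degree $p=0$ and $Z^{p,q}/B^{p,q}$ in degrees $p>0$, by the very definitions $Z^{p,q}=\{\mf{w}\in E_q(A)\mid c\cdot\mf{w}=0\}$, $B^{0,q}=0$ and $B^{p,q}=\{c\cdot\mf{w}\mid\mf{w}\in E_q(A)\}$ for $p>0$. Hence $E^{pq}_2=\Ext_R^p\big(\ku,E_q(A)\big)=Z^{p,q}/B^{p,q}$ for all $p,q\geq 0$, and the multiplicativity and convergence are inherited verbatim from Theorem \ref{te: CE}(c).

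There is essentially no obstacle here: the statement is a bookkeeping identification, and the substance has already been done in Theorem \ref{te: CE} and Example \ref{ex:Omega(R)}. The one point requiring a word of care is that one must use $\Ext_R^p(\ku,-)$ rather than $\Ext_R^p(R,-)$ — but since $\ku$ is the trivial $R$-module on which $c$ acts as $0$, and $B^{p,q}$ is defined via the action of $c$ on $E_q(A)$ (not on $\ku$), the computation in Example \ref{ex:Omega(R)} applies directly with the coefficient module $E_q(A)$, and the formula \eqref{ec:ActionDualNumbers} makes the operator $D$ explicit. Thus the corollary follows, and it is this concrete form of the spectral sequence that will be exploited in the sequel to compute $E(\cFK_3)$.
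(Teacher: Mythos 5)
Your proposal is correct and follows essentially the same route as the paper: the paper's proof simply applies Theorem \ref{te: CE} together with the identification of $\Ext_R^p\big(\ku,E_q(A)\big)$ with $Z^{p,q}/B^{p,q}$ coming from Example \ref{ex:Omega(R)} and the action \eqref{ec:ActionDualNumbers}, which is exactly what you spell out. Your remark about $\Ext_R^p(\ku,-)$ versus $\Ext_R^p(R,-)$ correctly resolves what is only a slip of notation in the Example, since the normalized bar resolution there resolves the trivial module $\ku$.
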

\begin{proof}
We apply Theorem \ref{te: CE} taking into account the fact that ${E}^{pq}_2$ and $\Ext_R^p\big(\ku,E_q(A)\big)$ are isomorphic.
\end{proof}

\begin{fact}[The Yoneda ring of a smash product.]\label{subsec:bosonization}
Also as an application of the multiplicative spectral sequence from Theorem \ref{te: CE} we shall investigate the relationship between the cohomology ring of a graded $H$-module algebra $\La$ and the cohomology ring of the smash product $\La\# H$. 

Let $H$ be a Hopf algebra with comultiplication $\Delta_H$ and counit $\varepsilon_H$. We assume that the antipode $S$ of $H$ is bijective and we denote the inverse of $S$ by $\overline{S}$. Let us recall that the antipode of a finite dimensional Hopf algebra is always bijective.

A graded  $H$-module algebra consists  of an $\N$-graded algebra  $\La=\oplus_{n\in\N}\La_n$ and an $H$-action on each $\La_n$ such that $h\cdot 1_\La=\varepsilon_H(h)1_\La$ and the following relation holds for any $h\in H, x\in\La_n$ and $y\in\La_m$:
\[
 h\cdot (xy)=(h_{(1)}\cdot x)(h_{(2)}\cdot y).
\]
In the above equation we used Sweedler's notation $\Delta_H(h)={h_{(1)}}\ot h_{(2)}$. As $\La$ is an $H$-module algebra, we can define the smash product $\La\#H$ of $\La$ and $H$. As a vector space $\La\#H=\La\ot H$ and the multiplication is defined by $(a\#h)(b\# k)=a({h_{1}}\cdot b)\#h_{2}k$, for all $x\in \La_n$, $y\in \La_m$ and $h,k\in H$. Of course, the unit of $\La\#H$ is $1_\La\# 1_H$.

The smash product $\La\#H$ can be seen as a graded twisted tensor product with invertible twisting map. Indeed, we define an $\N$-grading on $H$ by imposing that all elements to be of degree $0$. Let $\sigma:H\ot\La\to\La\ot H$ be the $\ku$-linear map defined for $x\in\La_n$ and $h\in H$ by
\[
\sigma(h\ot x)={h_{(1)}}\cdot x\ot h_{(2)}.
\]
One easily sees that $\sigma$ is graded and $\sigma(H\ot\La_+)\subseteq \La_+\ot H$. Moreover, $\sigma$ is bijective and its inverse $\tau$ maps $x\ot h$ to  ${\overline{S}(h_{(1)})}\cdot x\ot h_{(2)}$. 
Thus we can apply Theorem \ref{te: CE} for the twisted tensor product $\La\ot_\sigma H=\La\# H$, so there is a canonical $H$-module structure on $E(\La)$.  
Under the additional assumption that  $H$ and $\La$ are finite dimensional we can compute this action using the relation \eqref{ec:ActionOnV^(n)}, where we take $V$ to be the linear dual of $\La_+$. For, we fix a basis $\{e_1,\dots,e_n\}$ on $H$ such that $e_1=1_H$ and all other elements are in the kernel of $\varepsilon_H$. If  we write $\Delta_H(e_i)=\sum_{j=1}^n e_j\ot h_{ji}$, then the $(i,j)$-entry of the matrix $\widetilde{\tau}$ associated to $\tau$ is the linear endomorphism  $\tau_{ij}:\La\to\La$  which maps $x$ to ${\overline{S}(h_{ij})}\cdot x$. It follows that, for any $f\in V$ and $x\in \La$, 
\[
 f\tau_{ij}(x)=f\big(\overline{S}(h_{ij})\cdot x\big)=(h_{ij}\cdot f)(x),
\]
where the left   $H$-action on $V$ is given by $(h\cdot f)(x)=f(\overline{S}(h)\cdot x)$. By \eqref{ec:ActionOnV^(n)} we get
\[
 e_{i} \cdot(f_1\ot\cdots\ot f_q)=\sum_{i_1,\dots,i_{q-1}=1}^n h_{1i_1}\cdot f_1\ot h_{i_1i_2}\cdot f_2 \ot\cdots \ot  h_{i_{q-2}i_{q-1}} \cdot f_{q-1}\ot  h_{i_{q-1}i} \cdot f_{q}.
\]
Let $\Delta^1_H:=\Delta_H$. For any $i\geq2$ we define the  iterated comultiplication $\Delta_H^{i+1}$ by the recurrence relation $\Delta_H^{i+1}:=(\Delta_H\ot\,\id_{H^{(i)}})\Delta_H^{i}$. In particular we have
\[
 \Delta_H^{q+1}(e_i)=\sum_{i_0,\dots,i_{q-1}=1}^n e_{i_0}\ot h_{i_0i_1}\ot h_{i_1i_2} \ot\cdots \ot  h_{i_{q-2}i_{q-1}}\ot  h_{i_{q-1}i}.
\]
Since $e_1=1$ and $e_{i_0}$ is in the kernel of the counit, for any $i_0>1$, by applying $\varepsilon_H\ot\id_{H^{(n)}}$ to both sides of the above equation, we deduce that 
\[
 \Delta_H^{q}(e_i)=\sum_{i_1,\dots,i_{q-1}=1}^n h_{1i_1}\ot h_{i_1i_2} \ot\cdots \ot  h_{i_{q-2}i_{q-1}}\ot  h_{i_{q-1}i}
\]
In conclusion, the $H$-action on the bar complex \eqref{NormalizedBarComplex} is induced by the diagonal action of $H$ on $V$, defined by $(h\cdot f)(x)=f\big(\overline{S}(h)\cdot x\big)$, for any $h\in H$, $f\in V$ and $x\in \La$. We conclude this section by proving the following result.
\end{fact}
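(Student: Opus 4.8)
The result stated above asserts (I take it) that, for a finite-dimensional semisimple Hopf algebra $H$ and a finite-dimensional graded $H$-module algebra $\La$, the Yoneda ring $E(\La\#H)$ is isomorphic, as an $\N$-graded algebra, to the invariant subring $E(\La)^H$ for the $H$-action described just above. My plan is to feed the presentation $\La\#H=\La\ot_\sigma H$ into the Cartan--Eilenberg machinery of Theorem \ref{te: CE} and then use semisimplicity to collapse everything. By the discussion preceding the statement, $\sigma$ is an invertible graded twisting map with $\sigma(H\ot\La_+)\subseteq\La_+\ot H$, and $\overline{\La\#H}=(\La\#H)/\La_+(\La\#H)\cong H$; hence $A=\La$ (connected) and $R=H$ (trivially graded) satisfy the hypotheses of Theorem \ref{te: CE}, which gives a multiplicative spectral sequence
\[
E_2^{p,q}=\Ext_H^p\big(\ku,E_q(\La)\big)\ \Longrightarrow\ E_{p+q}(\La\#H),
\]
where the $H$-module structure on each $E_q(\La)$ is the one computed in \S\ref{subsec:bosonization}: it is induced by the diagonal $H$-action on the bar complex \eqref{NormalizedBarComplex}, with $(h\cdot f)(x)=f\big(\overline{S}(h)\cdot x\big)$ on the one-cochains.

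Next I would use semisimplicity to degenerate the spectral sequence. Since $H$ is semisimple every $H$-module is projective; in particular the trivial module $\ku$ is projective over $H$, so $\Ext_H^p(\ku,M)=0$ for all $p>0$ and all $H$-modules $M$. Therefore $E_2^{p,q}=0$ whenever $p>0$, the second page is concentrated in the column $p=0$, and the spectral sequence degenerates at $E_2$. Consequently the convergence filtration on $E_n(\La\#H)$ satisfies $F^1E_n(\La\#H)=0$, whence
\[
E_n(\La\#H)=E_\infty^{0,n}=E_2^{0,n}=\Hom_H\big(\ku,E_n(\La)\big)\cong E_n(\La)^H,
\]
the last isomorphism being the canonical identification of $\Hom_H(\ku,M)$ with the invariant subspace $M^H$ (it sends a morphism to the image of $1$).

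To upgrade this to an isomorphism of $\N$-graded algebras I would invoke the multiplicativity of the spectral sequence. Because $E_\infty^{p,q}=0$ for $p>0$, the edge homomorphism $E(\La\#H)\to E_2^{0,*}$ is a morphism of $\N$-graded algebras which, by the previous paragraph, is bijective in every degree; hence it is an isomorphism of graded algebras. It then remains to recognise the ring that the spectral sequence puts on $E_2^{0,*}=\bigoplus_nE_n(\La)^H$. By the construction of the Cartan--Eilenberg spectral sequence recalled before Lemma \ref{le:B R bimod resol} (cf. \cite[\S5.3]{ginzburgkumar}), the multiplication on the $p=0$ column of $E_2$ is induced by the Yoneda product of $E(\La)$; and since the $H$-action on $E(\La)$ is an $H$-module algebra action — the diagonal action being compatible with the concatenation product on $\Ou{*}{\La}$ — the subspace $E(\La)^H$ is a graded subalgebra of $E(\La)$, which is precisely $E_2^{0,*}$ with its product. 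Composing, $E(\La\#H)\cong E(\La)^H$ as $\N$-graded algebras.

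I expect the routine-but-unavoidable bookkeeping to be the main obstacle: one has to check explicitly that the $H$-action on $E(\La)$ coming from the twisted tensor product structure is an $H$-module algebra action, i.e. that $h\cdot(fg)=(h_{(1)}\cdot f)(h_{(2)}\cdot g)$ holds on the bar complex, which comes down to unwinding the diagonal action through the concatenation of cochains and using coassociativity of $\Delta_H$; and one must verify that the edge homomorphism transports the spectral-sequence product on $E_2^{0,*}$ to exactly this restricted Yoneda product. A minor point to keep in mind is that $\La\#H$ is not connected (its degree-zero part is $\overline{\La\#H}\cong H$), which is precisely why Theorem \ref{te: CE} was phrased for an $R$ that need not be connected, so no extra argument is needed on that account.
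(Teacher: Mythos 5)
Your proposal is correct and follows essentially the same route as the paper: you apply the spectral sequence of Theorem \ref{te: CE} to $\La\ot_\sigma H=\La\#H$ with the $H$-action on $E(\La)$ computed in \S\ref{subsec:bosonization}, use semisimplicity of $H$ to kill $\Ext_H^p(\ku,-)$ for $p>0$ so the sequence degenerates at the second page, and invoke multiplicativity to identify $E(\La\#H)$ with $E(\La)^H$ as graded algebras. The only difference is that you make explicit the edge-homomorphism and module-algebra bookkeeping that the paper's proof of Theorem \ref{te:H-invariants} leaves implicit.
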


\begin{theorem}\label{te:H-invariants}
 Let $H$ be a semisimple Hopf algebra (a fortiori finite dimensional). If $\La$ is a finite dimensional $H$-module algebra then there is an isomorphism of graded algebras
 \begin{align}\label{eq:yoneda ring of a bosonization}
E(\La\#H)\simeq E(\La)^H, 
\end{align}
where $E(\La)^H=\{\omega\in E(\La)\mid h\cdot \omega=\varepsilon_H(h)\omega, \forall h\in H\}$ is the space of $H$-invariant elements of $E(\La)$ with respect to the $H$-module structure on $E(\La)$  induced by the action on $\Ou{*}{\La}$ given by
\[
 h\cdot (f_1\ot\cdots\ot f_q)=h_{(1)}\cdot f_1\ot\cdots \ot h_{(q)}\cdot f_q,
\]
where for any $h\in H$ and $f\in V$ we have $(h\cdot f)(x)=f(\overline{S}(h)\cdot x)$. In particular, for any finite dimensional algebra in $_H^H\mathcal{YD}$ the smash product $\La\# H$ makes sense, and the isomorphism \eqref{eq:yoneda ring of a bosonization} holds true.
\end{theorem}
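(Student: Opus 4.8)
The plan is to run the smash product through the spectral sequence of a twisted tensor product. By \S\ref{subsec:bosonization}, $\La\#H$ is a graded twisted tensor product $\La\ot_\sigma H$ whose twisting map $\sigma$ is invertible and satisfies $\sigma(H\ot\La_+)\subseteq\La_+\ot H$, and the $H$-module structure it induces on $E(\La)$ via Theorem \ref{te: CE}(a)--(b) is precisely the one coming from the diagonal action $h\cdot(f_1\ot\cdots\ot f_q)=h_{(1)}\cdot f_1\ot\cdots\ot h_{(q)}\cdot f_q$ on $\Ou{*}{\La}$, where $(h\cdot f)(x)=f(\overline{S}(h)\cdot x)$. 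Hence Theorem \ref{te: CE}(c) applies and produces a multiplicative spectral sequence $E_2^{pq}=\Ext_H^p\big(\ku,E_q(\La)\big)\Rightarrow E_{p+q}(\La\#H)$.

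Next I would exploit the hypothesis that $H$ is semisimple. Then every $\ku H$-module is projective, hence $\ku$ itself is projective, so $\Ext_H^p(\ku,M)=0$ for all $p>0$ and all $M$, whereas $\Ext_H^0(\ku,M)=\Hom_H(\ku,M)\cong M^H$, the space of $H$-invariants. Therefore the second page is concentrated in the single column $p=0$, with $E_2^{0,q}\cong E_q(\La)^H$ and $E_2^{p,q}=0$ for $p>0$. A first-quadrant cohomological spectral sequence supported on one column has no nonzero higher differentials (the source or target of any $d_r$, $r\geq2$, lies in a column of negative or positive index), so it degenerates at $E_2$; moreover the convergence filtration on each $E_n(\La\#H)$ is the one-step filtration whose only nonzero subquotient is $E_\infty^{0,n}=E_2^{0,n}$. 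We thus obtain $E_n(\La\#H)\cong E_n(\La)^H$ as $\ku$-vector spaces for every $n$.

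It remains to upgrade this to an isomorphism of graded algebras, and this is where the only real work lies. First, $E(\La)^H=\bigoplus_q E_q(\La)^H$ is a graded subalgebra of $E(\La)$: coassociativity of $\Delta_H$ gives $h\cdot(uv)=\sum(h_{(1)}\cdot u)(h_{(2)}\cdot v)$ for the concatenation (Yoneda) product of cochains $u,v$ of $\Ou{*}{\La}$, so if $u,v$ are $H$-invariant then $h\cdot(uv)=\varepsilon_H(h)\,uv$, i.e. $uv$ is $H$-invariant. Second, by the construction of the multiplicative structure on the Cartan--Eilenberg spectral sequence (cf. \cite[\S5.3]{ginzburgkumar}, \cite[Chapter XVI]{CE}), the product on the bottom column $E_2^{0,*}=\bigoplus_q\Ext_H^0(\ku,E_q(\La))$ is induced by the Yoneda product $E_q(\La)\ot E_{q'}(\La)\to E_{q+q'}(\La)$ together with the $H$-action; under the identification $\Ext_H^0(\ku,M)\cong M^H$ this is exactly the restriction of the Yoneda product of $E(\La)$, so $E_2^{0,*}\cong E(\La)^H$ as graded algebras. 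Since the convergence filtration on each $E_n(\La\#H)$ is trivial, the ring $E(\La\#H)$ coincides with its associated graded $\bigoplus_n E_\infty^{0,n}=E_2^{0,*}$; equivalently, the edge homomorphism $E(\La\#H)\to E_2^{0,*}$ is a homomorphism of graded algebras that is bijective in each degree, which proves \eqref{eq:yoneda ring of a bosonization}. I expect the genuine obstacle to be exactly this second point, namely matching the abstract product carried by the Cartan--Eilenberg spectral sequence on its $p=0$ column with the restriction of the Yoneda product to $H$-invariant classes; the degeneration argument itself is formal.

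Finally, for the last assertion: if $\La$ is a finite dimensional algebra in $_H^H\mathcal{YD}$, then its multiplication and unit being morphisms in the braided monoidal category force $h\cdot(xy)=(h_{(1)}\cdot x)(h_{(2)}\cdot y)$ and $h\cdot 1_\La=\varepsilon_H(h)1_\La$, so $\La$ is a finite dimensional $H$-module algebra, the smash product $\La\#H$ is defined, and the first part applies verbatim (recall that $H$, being finite dimensional, has bijective antipode).
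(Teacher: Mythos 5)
Your proposal is correct and follows essentially the same route as the paper: apply the multiplicative Cartan--Eilenberg spectral sequence of Theorem \ref{te: CE} to the twisted tensor product $\La\ot_\sigma H=\La\#H$, use semisimplicity of $H$ to concentrate the second page in the column $p=0$ with $E_2^{0,q}\cong E_q(\La)^H$, and conclude by degeneration. The paper states the degeneration and the resulting graded algebra isomorphism more tersely, whereas you spell out the identification of the product on $E_2^{0,*}$ with the restricted Yoneda product and the triviality of the convergence filtration, which is a welcome elaboration of the same argument rather than a different one.
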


\begin{proof}
By the foregoing remarks we know that the $H$-action on $E(\La)$  satisfy the properties stated in the theorem.  As $H$ is semisimple, the multiplicative spectral sequence
\[
 E^{p,q}_2=\Ext_H^p\big(\ku,E_q(\La)\big)\Longrightarrow E_{p+q}(\La\# H)
\] 
(see Theorem \ref{te: CE}) degenerates at the second page, yielding the graded algebra isomorphism \eqref{eq:yoneda ring of a bosonization}. 

Let us now suppose that $\La$ is a finite dimensional algebra in $_H^H\mathcal{YD}$. As the multiplication and the unit of $\La$ are morphisms in the category of Yetter-Drinfeld modules, $\La$ is an $H$-module algebra. Thus  the smash product $\La\#H$ exists, and we have the isomorphism from equation \eqref{eq:yoneda ring of a bosonization}. 
\end{proof}

\begin{obs}\label{obs:bosonization}
If $\La$ is a braided Hopf algebra in the category of Yetter-Drinfeld $H$-modules, then $\La\# H$ has a natural structure of ordinary Hopf algebra, which is called the bosonization of $\La$. As a coalgebra the bosonization of $\La$ is the smash coproduct, see for instance \cite[$\S 1.3$]{AG}.
\end{obs}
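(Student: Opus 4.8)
The statement is the classical Radford biproduct / Majid bosonization theorem, so the plan is to verify directly that the smash product algebra structure and the smash coproduct coalgebra structure on $\La\ot H$ are compatible and admit an antipode. Recall the data at hand: since $\La$ is a braided Hopf algebra in $\ydhh$, it carries an $H$-action, written $h\cdot x$, and a left $H$-coaction $\delta(x)=x_{(-1)}\ot x_{(0)}$, subject to the Yetter-Drinfeld condition $\delta(h\cdot x)=h_{(1)}x_{(-1)}S(h_{(3)})\ot h_{(2)}\cdot x_{(0)}$; moreover $\La$ is at once an $H$-module algebra, an $H$-comodule algebra, an $H$-module coalgebra and an $H$-comodule coalgebra, its braided comultiplication $\Delta_\La(x)=x_{[1]}\ot x_{[2]}$ and its counit $\varepsilon_\La$ are algebra morphisms for the braided tensor product on $\La\ot\La$ (whose braiding is $\chi(x\ot y)=x_{(-1)}\cdot y\ot x_{(0)}$), and there is a braided antipode $S_\La$. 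On the vector space $\La\# H=\La\ot H$ I would impose the smash product $(x\#h)(y\#k)=x(h_{(1)}\cdot y)\#h_{(2)}k$ with unit $1_\La\#1_H$, and the smash coproduct $\Delta(x\#h)=\big(x_{[1]}\#(x_{[2]})_{(-1)}h_{(1)}\big)\ot\big((x_{[2]})_{(0)}\#h_{(2)}\big)$ with counit $\varepsilon(x\#h)=\varepsilon_\La(x)\varepsilon_H(h)$.

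The algebra axioms require no new work: the smash product is exactly the twisted tensor product $\La\ot_\sigma H$ of \S\ref{subsec:bosonization}, hence associative and unital. Dually, coassociativity and counitality of the smash coproduct are the formal duals of these facts and follow from $\La$ being an $H$-comodule coalgebra; concretely, one checks $(\Delta\ot\id)\Delta=(\id\ot\Delta)\Delta$ by the same manipulations used for associativity, exchanging the roles of product and coproduct and of action and coaction. That $\varepsilon$ is an algebra morphism is immediate.

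The crux is to show that $\Delta$ is multiplicative, namely $\Delta\big((x\#h)(y\#k)\big)=\Delta(x\#h)\,\Delta(y\#k)$, where the right-hand product is taken in $(\La\#H)\ot(\La\#H)$. Expanding the left-hand side uses that $\Delta_\La$ is a braided algebra morphism, which introduces the braiding $\chi$ between the two $\La$-tensorands, explicitly $\Delta_\La(xy)=x_{[1]}\big((x_{[2]})_{(-1)}\cdot y_{[1]}\big)\ot(x_{[2]})_{(0)}\,y_{[2]}$. The $H$-factors occurring in the smash coproduct must then reproduce precisely this braiding once they are moved past the $\La$-factors via the smash product. The verification rests on three identities: that $\Delta_\La$ is a braided algebra map (supplying $\chi$), that $m_\La$ is a morphism of $H$-comodules, and above all the Yetter-Drinfeld condition, which governs how $\delta$ interacts with $h\cdot(-)$. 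The conceptual point, and the reason the bosonization of a genuinely braided Hopf algebra is nevertheless an ordinary Hopf algebra, is that the braiding appearing on the $\La$-side is exactly absorbed into the $H$-comodule structure, so that no residual braiding survives in $\La\#H$. I expect this compatibility, most transparently carried out in the graphical calculus already used in \S\ref{subsec:CE}, to be the only substantial step.

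Finally I would exhibit the antipode. Setting $S_{\La\#H}(x\#h)=\big(1_\La\#S(x_{(-1)}h)\big)\big(S_\La(x_{(0)})\#1_H\big)$ combines the braided antipode $S_\La$ of $\La$ with the antipode $S$ of $H$. Using the bialgebra structure just established, the defining property of $S_\La$ (interpreted with the braiding) and the antipode axiom for $H$, a direct computation yields $S_{\La\#H}*\id=\id*S_{\La\#H}=u\varepsilon$, so $\La\#H$ is an ordinary Hopf algebra; since $S$ is bijective, which is automatic here as $H$ is finite dimensional, so is $S_{\La\#H}$. By construction the underlying coalgebra of $\La\#H$ is the smash coproduct of the $H$-comodule coalgebra $\La$ with $H$, which is the description recorded in \cite[\S1.3]{AG}.
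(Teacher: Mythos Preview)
Your outline is a correct sketch of the Radford--Majid bosonization theorem: the smash product and smash coproduct you write down are the standard ones, the bialgebra compatibility does indeed reduce to the Yetter--Drinfeld condition together with the braided-multiplicativity of $\Delta_\La$, and your formula for the antipode is the right one. So as a proof plan there is nothing wrong with it.

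That said, you should be aware that the paper does not prove this statement at all. It is recorded as a Remark and simply attributed to the literature via the reference \cite[\S1.3]{AG}; there is no argument given, nor is one expected, since the result is classical (Radford's biproduct, reinterpreted by Majid). So while your proposal supplies a genuine proof outline, the paper's own ``proof'' is just a citation. If your goal is to match the paper, a one-line reference suffices; if your goal is to supply an independent argument, what you have written is an adequate sketch, though you would still need to carry out the bialgebra-compatibility computation in full rather than gesture at the graphical calculus.
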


\section{The ring  \texorpdfstring{$E(A)$}{E(A)}}\label{section: EA}
Our aim in this section is to investigate the Yoneda ring of the subalgebra $A =\ku[a,b]$ of the $12$-dimensional Fomin-Kirillov algebra $B$. We start by providing a resolution of the trivial left $A$-module $\Bbbk$ by free left $A$-modules. It will be obtained as  the total complex associated to a certain double complex. Then we shall show that the ring $R:=\ku[c]$ acts on the cohomology ring $E(A)$ and we shall explicitly compute this action.
In order to do all that, we consider the following diagram in the category of left $A$-modules, where $\rho_x$ denotes the right multiplication by $x\in A$.
\begin{equation*}
\xymatrix{
\vdots \ar@{->}[1,0]|{\rho_{b} }&
\vdots\ar@{->}[1,0]|{\rho_{ -a}}&
\vdots\ar@{->}[1,0]|{\rho_{b} }&
\vdots\ar@{->}[1,0]|{\rho_{ -a}}&
\vdots\ar@{->}[1,0]|{\rho_{ba} }&\\
A\ar@{->}[1,0]|{\rho_{b} }&
\ar@{->}[0,-1]_{\rho_{ba} }\ar@{->}[1,0]|{\rho_{ -a}}A&
\ar@{->}[0,-1]_{\rho_{ ab}}\ar@{->}[1,0]|{\rho_{b} }A&
\ar@{->}[0,-1]_{\rho_{ba} }\ar@{->}[1,0]|{\rho_{ ab}}A&
\ar@{->}[0,-1]_{\rho_{b} }\ar@{->}[1,0]|{\rho_{ ab}}A&
\cdots\ar@{->}[0,-1]_{\rho_{b}}\\
A\ar@{->}[1,0]|{\rho_{ b}}&
\ar@{->}[0,-1]_{\rho_{ba} }\ar@{->}[1,0]|{\rho_{ -a}}A&
\ar@{->}[0,-1]_{\rho_{ab} }\ar@{->}[1,0]|{\rho_{ba} }A&
\ar@{->}[0,-1]_{\rho_{-a} }\ar@{->}[1,0]|{\rho_{ ba}}A&
\ar@{->}[0,-1]_{\rho_{- a} }\ar@{->}[1,0]|{\rho_{ ba}}A&
\cdots\ar@{->}[0,-1]_{\rho_{-a}}\\
A\ar@{->}[1,0]|{\rho_{b} }&
\ar@{->}[0,-1]_{\rho_{ba} }\ar@{->}[1,0]|{\rho_{ab} }A&
\ar@{->}[0,-1]_{\rho_{b} }\ar@{->}[1,0]|{\rho_{ab} }A&
\ar@{->}[0,-1]_{\rho_{ b}}\ar@{->}[1,0]|{\rho_{ ab}}A&
\ar@{->}[0,-1]_{\rho_{b} }\ar@{->}[1,0]|{\rho_{ab} }A&
\cdots\ar@{->}[0,-1]_{\rho_{b}}\\
A&\ar@{->}[0,-1]^{\rho_{ -a}}A&
\ar@{->}[0,-1]^{\rho_{-a} }A&
\ar@{->}[0,-1]^{\rho_{ -a}}A&
\ar@{->}[0,-1]^{\rho_{-a} }A&
\cdots \ar@{->}[0,-1]^{\rho_{-a}}\\
}
\end{equation*}
We claim that this diagram defines a first-quadrant double complex $C=(C_{\ast\ast},d^h_{\ast\ast}, d^v_{\ast\ast})$, where $C_{p,q}=A$. Note that, if $q>p$ then $d_{p,q}^v=\rho_b$  (respectively $d_{p,q}^v=\rho_{-a}$), provided that  $p$ is even (respectively odd). For $q\geq p$ we have $d_{p,q}^h=\rho_{ba}$  (respectively $d_{p,q}^h=\rho_{ab}$), provided that $q$ is odd (respectively even). On the other hand, for $q<p$ we set $d^h_{p,q}=\beta\circ d^v_{q,p}\circ\beta$ and for any $q\leq p$ one takes $d^v_{p,q}=\beta\circ d^h_{q,p}\circ\beta$.

\begin{lema}
The above diagram is a double complex of left $A$-modules. The total complex  $\Tot_{*}(C)$ is a minimal resolution of  $\Bbbk$ by free left $A$-modules. In particular $\dim E_n(A)=n+1$, for all $n\in\N$. 
\end{lema}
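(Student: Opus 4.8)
The plan is to verify directly that the displayed array is a double complex and then analyze the total complex. First I would check that the array is a bicomplex: because every $C_{p,q}=A$ and every arrow is a right multiplication $\rho_x$ (or a conjugate $\beta\rho_x\beta$), the relations $d^h d^h=0$, $d^v d^v=0$ and $d^h d^v=d^v d^h$ reduce to identities in $A$ among the elements $a,b,ab,ba$. The vanishing of the vertical (resp. horizontal) squares amounts to $ab\cdot ba$, $ba\cdot ab$, $a\cdot b$, $b\cdot a$ vanishing in $A$ in the appropriate slots, which follows from $a^2=b^2=0$ and the basis $\mathcal A=\{1,a,b,ab,ba,aba\}$ together with $aba=bab$; for instance $\rho_{-a}\rho_b=\rho_{ba}$ as maps $A\to A$ only after composing with the next arrow, so one traces each length-two path and uses $a^2=b^2=0$ or $abab=0=baba$ (consequences of $aba=bab$ and $a^2=b^2=0$). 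The regions $q<p$ are handled by the symmetry rule $d^h_{p,q}=\beta d^v_{q,p}\beta$ together with the facts from \eqref{eq:properties of alpha and beta} that $\beta^2=\id_A$ and $\beta$ is an algebra automorphism, so the bicomplex relations below the diagonal follow from those above it by conjugating with $\beta$; along the diagonal one checks compatibility of the two descriptions by hand using $\beta(a)=-b$, $\beta(b)=-a$.

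Next I would show $\Tot_*(C)$ is a resolution of $\ku$. Since $A$ is free of rank $6$ over $\ku$, the most transparent route is to compute the homology of $\Tot_*(C)$ by a spectral sequence of the bicomplex, or more concretely to tensor down to $\ku$ and compare Euler characteristics / Poincaré series, but minimality (see below) makes this cleaner: if one first proves $\Tot_*(C)$ is minimal, then its homology computes $\Tor^A_*(\ku,\ku)$ and the ranks of the free modules are forced, so it suffices to prove exactness in low degrees or — better — to exhibit explicitly the augmentation $\Tot_0(C)=A\to\ku$ and a contracting homotopy, or to invoke that $A$ is a Koszul-type algebra (indeed $A=\ku\langle a,b\mid a^2,b^2,aba-bab\rangle$ is a well-known $4$-dimensional-in-each-degree... no: $6$-dimensional, with Hilbert series $1+2t+2t^2+t^3$) whose minimal resolution is periodic-like with $n+1$ generators in degree $n$. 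The key structural observation is that each $d_{p,q}$ is right multiplication by an element of $A_+$, hence $\ku\ot_A d=0$, so $\Tot_*(C)$ is automatically minimal once it is shown to be a resolution; then $\dim E_n(A)=\dim\Hom_A(\Tot_n(C),\ku)=\#\{(p,q):p+q=n\}=n+1$.

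The main obstacle I anticipate is establishing exactness of $\Tot_*(C)$ — verifying the bicomplex relations is a finite bookkeeping exercise, but proving the total complex has homology only in degree $0$ requires either a genuine contracting homotopy or a clean conceptual argument. The cleanest approach is probably to filter $C$ by columns (or rows), compute the homology of each column $(C_{p,\bullet},d^v)$ as a complex of free $A$-modules, and show the resulting $E^1$-page of the bicomplex spectral sequence collapses to $\ku$ in total degree $0$. Each column is, up to conjugation by $\beta$, a complex of the form $\cdots\xrightarrow{\rho_a}A\xrightarrow{\rho_b}A\xrightarrow{\rho_a}A$ (alternating), and one computes that $\Ker\rho_b/\Imm\rho_a$ and $\Ker\rho_a/\Imm\rho_b$ are small — using the explicit basis $\mathcal A$ one finds $\operatorname{ann}_A^{\mathrm{left... right}}(b)=bA+baA$ type statements — so the columns are exact except at the bottom, where one gets a copy of $A/A_+$-related data that assembles across $p$ into the horizontal complex whose homology is $\ku$. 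Finally the count $\dim E_n(A)=n+1$ is immediate from minimality as explained above; I would state it as a corollary of the shape of $\Tot_*(C)$ rather than re-deriving it.
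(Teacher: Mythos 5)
Your overall strategy coincides with the paper's: check the sign relations of the diagram using $a^2=b^2=0$ and $aba=bab$, compute the homology of $\Tot_*(C)$ by taking homology of the bicomplex one direction at a time via the associated spectral sequence, and then obtain minimality and the count $\dim E_n(A)=\#\{(p,q):p+q=n\}=n+1$ from the fact that every differential is right multiplication by an element of $A_+$ (so $\Hom_A(\rho_x,\ku)=0$). That last part of your argument is correct. The genuine gap is in the exactness step, which you yourself flag as the main obstacle: your description of the columns is wrong. Column $p$ is not, even up to conjugation by $\beta$, an alternating complex $\cdots \overset{\rho_a}{\longrightarrow} A \overset{\rho_b}{\longrightarrow} A$; it carries the degree-one differential ($\rho_b$ or $\rho_{-a}$, constant) only above the diagonal and switches to the degree-two differentials $\rho_{ab},\rho_{ba}$ at and below it. Consequently the columns are \emph{not} exact except at the bottom. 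For instance column $1$ is $A \overset{\rho_{ab}}{\longleftarrow} A \overset{\rho_{-a}}{\longleftarrow} A \overset{\rho_{-a}}{\longleftarrow}\cdots$, whose homology at $q=1$ is $\Ker(\rho_{ab})/\Imm(\rho_{a})=\langle ab\rangle\neq 0$, and whose homology at $q=0$ is the four-dimensional $A/\Imm(\rho_{ab})$ --- not ``$A/A_+$-related data''. So the first page of your column filtration is not concentrated in the bottom row, and the collapse does not happen for the reason you give: you still have to compute the differential induced on that page and verify that the resulting complexes are exact for $p\geq 1$ and have homology $\ku$, concentrated at the origin, for $p=0$. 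That verification, carried out with the explicit spaces $A/\Imm(\rho_{xy})$, $\Ker(\rho_{yx})/\Imm(\rho_{xy})$ and $\Ker(\rho_{xy})/\Imm(\rho_{x})$, is precisely the substance of the paper's proof (done with the row filtration, which by the $\beta$-symmetry is equivalent), and it is missing from your plan.

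Two smaller points. The shortcut floated in your second paragraph is circular: minimality of a complex tells you nothing about its homology until you know it is a resolution, so you cannot ``prove minimality first'' and conclude that the ranks are forced; likewise you cannot invoke that the minimal resolution of $A$ has $n+1$ generators in degree $n$, since that is essentially the statement being proved. Finally, with the signs as drawn (the entries $\rho_{-a}$), the squares of the diagram anticommute --- this is exactly where $aba=bab$ enters --- so the relation to check is $d^hd^v=-d^vd^h$, not $d^hd^v=d^vd^h$; if you prefer commuting squares you must insert the usual signs into the total differential.
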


\begin{proof}
The columns and the rows are complexes as $a^2=b^2=0$. The squares anti-commute since $aba=bab$. Let us prove that $\Tot_*(C)$ is exact in positive degree and that its homology in degree $0$ is $\ku$.  By \cite[Definition 5.6.2]{We}, the filtration by rows of $C$ induces a spectral sequence with
$ ^{II}E^1_{p,q}:=H^h_q(C_{p,\ast})$. 
Since the horizontal arrows of $C$ maps the basis $\cA$ to itself, we may identify $ ^{II}E^1_{p,q}$ to a subspace of $A$ which is spanned by some elements of $\cA$.

For $x\in\{a,b\}$ we have  $ \Ker (\rho_x)=\Imm(\rho_x)$. On the other hand, if $x$ and $y$ are distinct elements in  $\{a,b\}$, then
\[
\frac{ A}{\Imm(\rho_{xy})}=\langle 1,x,y,xy\rangle,\quad
\frac{ \Ker (\rho_{yx})}{\Imm(\rho_{xy})}=\langle y,yx\rangle,\quad
\frac{ \Ker (\rho_{xy})}{\Imm(\rho_{x})}=\langle xy\rangle.
\]
Let us denote these vector spaces by $C_{x,y}$, $C'_{x,y}$ and $C''_{x,y}$, respectively. Since $A/\Imm(\rho_a)=\langle 1, b, ab\rangle$ it follows that $ ^{II}E^1_{0,\ast}$ is the complex
\begin{equation*}
0\longleftarrow\langle 1_A, b, ab\rangle\longleftarrow C_{b,a}\longleftarrow C_{b,a}\longleftarrow\cdots \longleftarrow C_{b,a}\longleftarrow\cdots,
\end{equation*}
whose differential maps are all induced by $\rho_b$. It is easy to see that the $n$th homology group of this complex is precisely $\Bbbk 1_A$ provided that $n=0$, and it is trivial otherwise.
Similarly, $^{II}E^1_{1,\ast}$ is the complex
\begin{equation*}
0\longleftarrow 0\longleftarrow C''_{b,a}\longleftarrow C'_{b,a}\longleftarrow\cdots \longleftarrow C'_{b,a}\longleftarrow\cdots.
\end{equation*}
In this case, the map $C'_{b,a}\to C''_{b,a}$ is induced by $\rho_{ab}$, while the other maps are induced by $\rho_{-a}$. It follows that the homology of ${}^{II}E^1_{1,\ast}$ is trivial. For  $p>1$, one proves that  $\h_q(^{II}E^1_{p,\ast})=0$ in a similar way.

We conclude that all $^{II}E^2_{p,q}$ are trivial, excepting $^{II}E^2_{0,0}$ which is of dimension $1$. Thus the spectral sequence collapses at $r=2$ and $\h_n(\Tot_\ast(C))=0$, for any $n>0$. On the other hand $\h_0(\Tot_\ast(C))=\Bbbk$, so $\Tot_\ast(C)$ is a free resolution of $\Bbbk$ in the category of left $A$-modules.

Since $\Hom_A( \rho_x,\Bbbk)=0$ for any $x\in\cA\setminus \{1\}$, it follows that the horizontal and vertical differential maps of the double complex  $\Hom_A( C,\Bbbk)$ are all trivial. In conclusion, the differential maps of $\Hom_A(\Tot_*(C) ,\Bbbk)$ are also trivial. Hence, the resolution  $\Tot_*(C)$ is minimal and
\[
 E_n(A)=\Ext_A^n(\Bbbk,\Bbbk)=\Hom_A(\Tot_*(C) ,\Bbbk)=\Hom_A(A^{n+1},\Bbbk)\cong\Bbbk^{n+1}.
\]
Thus $\dim E_n(A)=n+1$.
\end{proof}

For  the computation of the algebra structure of $E(A)$ we need another important property of $A$, namely that it is a $\mathcal{K}_2$-algebra.
Following \cite{CS} we shall say that  $A$ is a $\mathcal{K}_2$-algebra if and only if $E(A)$ is generated as an algebra by $E_1(A)$ and $E_2(A)$.
Note that any Koszul algebra is $\mathcal{K}_2$, as its cohomology ring is generated by the homogeneous component of degree $1$, cf.  \cite{CS}.

\begin{lema}\label{le:K2}
The algebra $A$ is $\mathcal{K}_2$.
\end{lema}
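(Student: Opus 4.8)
The plan is to pin down explicit generators of $E(A)$ by working in the normalized bar complex $\Ou{*}{A}$ of \S\ref{SubSecBarResolution}, where $V$ is the linear dual of $A_+=\langle a,b,ab,ba,aba\rangle$ and the Yoneda product is the concatenation product of $T(V)$. Because $A_+$ is five-dimensional, each homogeneous piece $\Ou{n}{A,p}$ is finite-dimensional, so the cohomology can be read off internal degree by internal degree; moreover the previous lemma already tells us that $\dim E_n(A)=n+1$, with $E_n(A)_j$ nonzero exactly for $n\le j\le n+\lfloor n/2\rfloor$.

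First I would compute $\lD^1$ and $\lD^2$ on the dual basis $\{a^\ast,b^\ast,(ab)^\ast,(ba)^\ast,(aba)^\ast\}$. One finds that $a^\ast,b^\ast$ are cocycles spanning $E_1(A)$, that $\lD^1((ab)^\ast)=-a^\ast\ot b^\ast$ and $\lD^1((ba)^\ast)=-b^\ast\ot a^\ast$, and that a basis of $E_2(A)$ is given by $x^2:=[a^\ast\ot a^\ast]$, $y^2:=[b^\ast\ot b^\ast]$ (internal degree $2$) together with
\[
z:=[\,a^\ast\ot (ba)^\ast+(ab)^\ast\ot a^\ast\,]
\]
(internal degree $3$), where $x:=[a^\ast]$, $y:=[b^\ast]$; one checks directly that the displayed $2$-cochain is a cocycle and that it is not a coboundary, the only coboundary in that internal degree being $\lD^1((aba)^\ast)$. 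In particular $xy=yx=0$ in $E_2(A)$, so $x,y$ generate a quotient of $\ku\langle x,y\mid xy,yx\rangle$; and since $z$ sits in internal degree $3$ it cannot be a product of classes in $E_1(A)$, so $z$ is an unavoidable degree-$2$ generator of $E(A)$.

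It then remains to show that $x,y,z$ already generate $E(A)$, and the plan has three ingredients. (i) The subalgebra generated by $x$ and $y$ is all of $A^{!}:=\ku\langle x,y\mid xy,yx\rangle$: the linear strand $\bigoplus_nE_n(A)_n$ of the Yoneda ring is always isomorphic to the quadratic dual, which here is exactly $A^{!}$, and $\dim E_n(A)_n=2=\dim A^{!}_n$ for $n\ge1$. (ii) The class $z$ is central in $E(A)$: this is the equality in $E_3(A)$ of the concatenations $a^\ast\ot\zeta$ and $\zeta\ot a^\ast$ (and likewise with $b^\ast$), $\zeta$ the chosen cocycle for $z$, which amounts to writing $a^\ast\ot\zeta-\zeta\ot a^\ast$ as $\lD^2$ of an explicit $2$-cochain — a finite linear-algebra check in the two relevant internal degrees. (iii) The class $z$ is not nilpotent. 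Granting (i)--(iii), the monomials $z^k$, $z^kx^\ell$, $z^ky^\ell$ (with $k\ge0$, $\ell\ge1$) are linearly independent — they are separated by the internal grading except for the pairs $z^kx^\ell,z^ky^\ell$, whose independence follows from their explicit cocycle representatives — and there are exactly $n+1$ of them in homological degree $n$. Hence the bigraded map $\ku[z]\ot A^{!}\to E(A)$ is an isomorphism, so in particular $E(A)$ is generated by $E_1(A)$ and $E_2(A)$ and $A$ is $\mathcal{K}_2$.

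The main obstacle is ingredient (iii). The power $z^k$ is represented by the concatenated cocycle $\zeta^{\ot k}$, and the point is that $\zeta^{\ot k}$ is never a coboundary. I expect to establish this either by isolating a tensor monomial occurring in $\zeta^{\ot k}$ that no coboundary of the appropriate internal degree can hit — this requires care in choosing the order on tensor monomials, since $\lD^1((aba)^\ast)$ mixes factors of type $(ab)^\ast\ot a^\ast$ with others — or, more robustly, by invoking that the cohomology ring of a finite-dimensional Nichols algebra of diagonal type (such as $A$) contains a polynomial subalgebra on classes of even cohomological degree, and then using the dimension count above to identify $z$ as such a class. The computations in (i) and (ii) are routine once the internal grading is used to split them into small finite-dimensional pieces, so the substance of the proof is concentrated entirely in the non-nilpotence of $z$.
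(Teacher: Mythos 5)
Your route is genuinely different from the paper's (which deduces the $\mathcal{K}_2$ property abstractly: $A'=\ku\langle a,b\mid a^2,b^2\rangle$ is Koszul, $g=aba-bab$ is regular and normal with $A=A'/A'g$, and Cassidy--Shelton's Theorem 9.1 applies because the obstruction map $\gamma$ vanishes), but as written it has two genuine problems. First, your step (ii) is false: $\mf{z}$ is \emph{not} central in $E(A)$. The actual relations, established in Theorem \ref{thm:EA algebra structure}, are $\mf{zx}+\mf{yz}=0$ and $\mf{xz}+\mf{zy}=0$; since $\mf{xz}$ and $\mf{yz}$ are linearly independent in $E_3(A)$, one has $\mf{zx}=-\mf{yz}\neq\mf{xz}$. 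Your proposed ``finite linear-algebra check'' that $a^\ast\ot\zeta-\zeta\ot a^\ast$ is a coboundary would therefore fail. This part is repairable --- the twisted relations still reduce every monomial in $\mf{x},\mf{y},\mf{z}$ to $\pm\mf{x}^{\ell}\mf{z}^{k}$ or $\pm\mf{y}^{\ell}\mf{z}^{k}$, so your counting scheme survives --- but the target of your isomorphism must be the algebra with these twisted commutation rules, not $\ku[z]\ot A^{!}$ with $z$ central.

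The second problem is not repairable as proposed: step (iii), the non-nilpotence of $\mf{z}$ (more precisely, the nonvanishing and independence of all the classes $\mf{x}^{\ell}\mf{z}^{k}$, $\mf{y}^{\ell}\mf{z}^{k}$), is the entire substance of the argument and you do not prove it. Your first route (isolating a tensor monomial of $\zeta^{\ot k}$ missed by all coboundaries) is only sketched and is exactly where the difficulty lies. Your fallback is based on a false premise: $A$ is not a finite-dimensional Nichols algebra of diagonal type. The braiding of $B=\mathfrak{B}(\cO)$ does not restrict to $\ku a\oplus\ku b$ (for instance $\chi(a\ot b)$ involves $x_{(13)}=c$), and no rank-two diagonal Nichols algebra with both generators square-zero can have Hilbert series $(1+t)(1+t+t^2)$, since its PBW factorization would have to contain $(1+t)^2$, which does not divide $1+2t+2t^2+t^3$. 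Even granting some polynomial subalgebra in $E(A)$, you would still have to identify your specific class $\mf{z}$ (or $\mf{z}^2$) as a polynomial generator. So the proposal, while it would prove more than the lemma if completed (it would give the full ring structure of $E(A)$, which the paper only obtains afterwards by \emph{using} this lemma), currently leaves its key step unestablished; the paper's appeal to the regular-normal-element theorem of Cassidy and Shelton is precisely what lets one avoid this computation.
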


\begin{proof}
Let $A'=\ku\langle a,b\,|\,a^2,b^2\rangle$. It is well known that  $A'$ is Koszul, see for example \cite[Theorem 6.12]{JPS}. By the foregoing remarks, $A'$ is $\mathcal{K}_2$ as well. Note that $g=aba-bab$ is normal and regular in $A$ and $A=A'/A'g$. We conclude that $A$ is $\mathcal{K}_2$
using \cite[Theorem 9.1]{CS}. Note that the map
$$
 \gamma:\Ext_{A'}^2(\Bbbk,\Bbbk)\to \Ext_{A}^1(\Bbbk,\Bbbk(3))
$$
that appears in the statement of the foregoing mentioned result 
is trivial, in view of the observation preceding \cite[Corollary 9.2]{CS} and the fact that $A'$ has no defining relations of degree $\deg(g)+1=4$.
\end{proof}

\begin{theorem}\label{thm:EA algebra structure}
There is a graded algebra isomorphism
$$
E(A)\simeq\ku\langle \mf{x,y,z}\mid \mf{xy},\mf{yx},\mf{zx}+\mf{yz},\mf{xz}+\mf{zy}\rangle,
$$
such that $\deg \mf{x}=1=\deg \mf{y}$ and $\deg \mf{z}=2$.
\end{theorem}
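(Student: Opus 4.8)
The strategy is to use the minimal free resolution $\Tot_*(C)$ of $\ku$ constructed in the previous lemma, together with the $\mathcal{K}_2$-property established in Lemma \ref{le:K2}. Since $\dim E_n(A)=n+1$, the Hilbert series of $E(A)$ (with respect to the cohomological grading) is $\sum_{n\geq 0}(n+1)t^n=(1-t)^{-2}$. The candidate algebra $E:=\ku\langle\mf{x},\mf{y},\mf{z}\mid \mf{xy},\mf{yx},\mf{zx}+\mf{yz},\mf{xz}+\mf{zy}\rangle$ with $\deg\mf{x}=\deg\mf{y}=1$, $\deg\mf{z}=2$ should be checked to have exactly the same Hilbert series: one shows by a straightforward rewriting argument that $\{\fij,\gij\}\cup\{\hj\}$-type monomials — more precisely, a basis obtained by putting every word in normal form using the four relations — gives a $\ku$-basis with $n+1$ elements in degree $n$ (the relations $\mf{xy}=\mf{yx}=0$ kill all mixed words in $\mf x,\mf y$ except pure powers, and the two degree-$3$ relations let one move every $\mf z$ past an $\mf x$ or $\mf y$ at the cost of swapping $\mf x\leftrightarrow\mf y$ and a sign).

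Next I would produce the homomorphism $E\to E(A)$. For this I need explicit cocycle representatives in the complex $\Ou{*}{A}$ (equivalently, in the dual of $\Tot_*(C)$) of three classes $\mf x,\mf y\in E_1(A)$ and $\mf z\in E_2(A)$. The degree-$1$ part $E_1(A)$ is $2$-dimensional, dual to $A_1=\ku a\oplus\ku b$; take $\mf x,\mf y$ dual to $a,b$ respectively. The class $\mf z\in E_2(A)$ should be chosen using the $\mathcal{K}_2$-structure: $E_2(A)$ is $3$-dimensional, two of its dimensions come from the Koszul-type quadratic classes (products living in the subalgebra generated by $E_1$) and the third, genuinely new, class $\mf z$ corresponds to the cubic defining relation $g=aba-bab$ of $A$ — this is exactly the content of \cite[Theorem 9.1]{CS}. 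Then I must verify the four relations hold in $E(A)$: $\mf{xy}=\mf{yx}=0$ follows because $A$ has no quadratic monomial relations involving both $a$ and $b$ beyond $a^2,b^2$ — concretely the Yoneda products $\mf x\mf y$ and $\mf y\mf x$ live in the $1$-dimensional $E_1\cdot E_1$ part of $E_2(A)$ coming from $a^2,b^2$, and computing against the resolution shows they vanish. The two degree-$3$ relations $\mf{zx}+\mf{yz}=0$ and $\mf{xz}+\mf{zy}=0$ are the heart of the computation: they must be checked by an explicit cochain-level computation of the Yoneda products $\mf z\mf x$, $\mf y\mf z$, etc., in $\Ou 3{A}$, and seeing that the prescribed combinations are coboundaries (in fact zero, since the resolution is minimal, so it suffices to check they are zero on the nose in $\Hom_A(\Tot_3(C),\ku)$). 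This uses the anticommutativity $aba=bab$ of the squares in $C$ in an essential way.

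Having the well-defined graded algebra map $\Phi\colon E\to E(A)$, it remains to prove it is an isomorphism. Surjectivity is immediate once we know $E(A)$ is generated in degrees $\le 2$: Lemma \ref{le:K2} gives that $E(A)$ is generated by $E_1(A)\cup E_2(A)$, $E_1(A)=\ku\mf x\oplus\ku\mf y$ is in the image, and $E_2(A)$ is spanned by $\mf x^2,\mf y^2$ (Yoneda squares, which are $\Phi(\mf x^2),\Phi(\mf y^2)$) together with the new class $\mf z=\Phi(\mf z)$; one must check $\mf x^2\ne 0\ne\mf y^2$ and that these three classes are linearly independent, which again is a direct computation against the minimal resolution. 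Then surjectivity plus the fact that $\dim E_n=n+1=\dim E(A)_n$ in every degree forces $\Phi$ to be an isomorphism, provided the Hilbert-series computation for $E$ in the first paragraph is correct.

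\textbf{Main obstacle.} The delicate point is the verification of the two cubic relations $\mf{zx}+\mf{yz}=0$ and $\mf{xz}+\mf{zy}=0$ at the cochain level. This requires writing down explicitly the comultiplication/Yoneda-product formula on $\Ou *{A}$ for the non-Koszul generator $\mf z$ (whose representative is a genuine $2$-cochain, not a product of $1$-cochains), tracking carefully the signs coming from $aba=bab$, and matching against the differentials $d^h,d^v$ of the double complex $C$. Equivalently one can phrase it as: identify $E(A)$ with the $\operatorname{Ext}$-algebra computed via $\Tot_*(C)$ and show the multiplicative structure is governed by the comultiplication on $\Tot_*(C)$ dual to $m_A$; the combinatorics of the grid of $\rho_a,\rho_b,\rho_{ab},\rho_{ba}$ maps is what makes the relations $\mf{zx}=-\mf{yz}$ rather than, say, $\mf{zx}=\mf{xz}$. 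Once this explicit product computation is done, the rest (Hilbert series bookkeeping, surjectivity from $\mathcal{K}_2$) is routine.
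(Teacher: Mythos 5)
Your skeleton coincides with the paper's: exhibit classes $\mf{x},\mf{y}\in E_1(A)$ and $\mf{z}\in E_2(A)$, use Lemma \ref{le:K2} to get generation in degrees $\leq 2$, verify the four relations at the cochain level, and conclude by comparing dimensions with a normal-form spanning set for the abstract algebra $E$ (this last bookkeeping, including the count of $n+1$ monomials in degree $n$, matches the paper exactly; note that for the final step you only need that the monomials span, since surjectivity of $\Phi$ together with $\dim E_n(A)=n+1$ then forces equality). However, the step you yourself label the ``main obstacle'' --- the verification of $\mf{xy}=\mf{yx}=0$, the independence of $\mf{x}^2,\mf{y}^2,\mf{z}$, and above all the two cubic relations $\mf{zx}+\mf{yz}=0$ and $\mf{xz}+\mf{zy}=0$ --- is precisely the content of the theorem, and it is only promised, not carried out. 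Worse, the vehicle you propose for it makes it harder than necessary: on $\Hom_A(\Tot_*(C),\ku)$ the Yoneda product is not given by any ready-made formula; to multiply classes there you must first construct chain-level comparison maps (equivalently a diagonal on $\Tot_*(C)$ compatible with $m_A$), and only after transporting the product through these maps does minimality let you conclude ``zero on the nose.'' None of that structure is supplied in your proposal, so as written the key relations remain unverified.

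The paper avoids this entirely by doing the computation in the normalized bar complex $\Ou{*}{A}$, where the Yoneda product is just concatenation of cochains and the relations appear as explicit coboundaries rather than as vanishing cocycles: with $\mf{x}=[p_a]$, $\mf{y}=[p_b]$ and $\mf{z}=[\,p_b\ot p_{ab}+p_{ba}\ot p_b\,]$, one has $\mf{xy}=[\,p_a\ot p_b\,]=[\lD_1(p_{ab})]=0$, and $\mf{xz}+\mf{zy}$ is the class of $\lD_2(p_{aba}\ot p_b+p_{ab}\ot p_{ab})$, hence zero (similarly for the other two relations); the basis statements for $E_1(A)$ and $E_2(A)$ then follow from $\dim E_n(A)=n+1$. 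So the route to completion is to abandon the minimal resolution for this step (using it only for the dimension count, as the paper does) and produce these explicit bar-complex representatives and coboundaries; your identification of $\mf{z}$ with the class attached to the cubic relation $aba-bab$ via \cite[Theorem 9.1]{CS} is a reasonable guide, but it does not by itself yield the product computations you need.
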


\begin{proof} We denote the $\ku$-linear dual of  $A_+$ by $V$. We use the normalized bar complex \eqref{NormalizedBarComplex} to find elements $\mathsf{x}$, $\mathsf{y}$ and $\mathsf{z}$ in $E(A)$ that generate this algebra and satisfy the above relations.

If $\cA_+:=\cA\setminus\{1\}$, then we take $\{p_x\}_{x\in\cA_+}$ to be the dual basis of $\cA_+$. It is easy to see that
\begin{align}\label{eq:lD en E1}
\begin{split}
\lD_1(\as)&=0=\lD_1(\bs),\qquad \lD_1(\ab)=\as\ot\bs,\qquad \lD_1(\ba)=\bs\ot\as,\\
&\lD_1(\aba)=\as\ot\ba+\ab\ot\as+\bs\ot\ab+\ba\ot\bs.
\end{split}
\end{align}
We set $\mf{x}$, $\mf{y}$ and $\mf{z}$ to be the cohomology classes of $\as$, $\bs$ and $\bs\ot\ab+\ba\ot\bs$, respectively.
Note that $\mf{x},\mf{y}\in E_1(A)$, while $\mf{z}\in E_2(A)$.  Using the fact that $\dim E_n(A)=n+1$, we deduce that $\{\mf{x},\mf{y}\}$ is a basis of $E_1(A)$ and $\{\mf{x}^2,\mf{y}^2,\mf{z}\}$ is a basis of $E_2(A)$.
By Lemma \ref{le:K2}, the algebra $A$ is $\mathcal{K}_2$. It follows that $E(A)$ is generated as an algebra by $\mf{x}$, $\mf{y}$ and $\mf{z}$.

It remains to prove that these cohomology classes satisfy  the relations from the statement of the theorem. Since $\mf{xy}$ is the cohomology class of $\as\ot\bs$ we get $\mf{xy}=0$ in view of the second relation of \eqref{eq:lD en E1}. The relation $\mf{yx}=0$ can be proved in a similar way.
On the other hand $\mf{xz}+\mf{zy}$ is trivial, as it is the cohomology class of 
\[
w=\lD_2(\aba\ot\bs+\ab\ot\ab).
\]
The last relation can be proved in a similar way.

Let $E:=\ku\langle X,Y,Z\mid XY,YX,XZ+ZY,YZ+ZX\rangle$. We regard $E$ as a graded algebra by imposing the relations $\deg X=\deg Y=1$ and $\deg Z=2$. By the preceding remarks, there is a canonical surjective morphism of graded algebras $\phi:E\to E(A)$.  As $Z\in E_2$, it follows that
\[
\textstyle \cE_n:= \{X^{n-2i}Z^i\mid 0\leq i\leq \frac{n}{2}\}\bigcup \{Y^{n-2i}Z^i\mid 0\leq i\leq \frac{n}{2}\}
\]
is a basis of $E_n$. Since $\dim E_n=n+1$, we conclude that $\phi$ is an isomorphism.
\end{proof}

\begin{lema}
The action \eqref{ec:ActionDualNumbers} of $R$ on the generators of $E(A)$ is given by
$$
c\cdot \mf{x}=0=c\cdot \mf{y}\quad\mbox{and}\quad c\cdot \mf{z}=\mf{x}^2-\mf{y}^2.
$$
\end{lema}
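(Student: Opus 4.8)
The plan is to compute the action of $c$ on each generator directly from formula \eqref{ec:ActionDualNumbers} applied to the cochain representatives chosen in the proof of Theorem \ref{thm:EA algebra structure}, using the explicit values \eqref{eq:DefinitionAlphaBeta} of $\alpha$ and $\beta$ on $A$ and their duals acting on $V$. Recall that $\mf{x}$, $\mf{y}$ are represented by the linear forms $\as,\bs\in V=A_+^*$, and $\mf{z}$ by the $2$-cochain $\bs\ot\ab+\ba\ot\bs\in V^{(2)}$; here $f\mapsto f\beta$ and $f\mapsto f\beta\alpha$ denote precomposition with the endomorphisms $\beta$ and $\beta\alpha$ of $A$ (restricted to $A_+$).

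First I would handle the degree-one generators. For $q=1$, \eqref{ec:ActionDualNumbers} reads $c\cdot f = f\beta\alpha$. Since $\beta\alpha=-\alpha\beta$ by \eqref{eq:properties of alpha and beta} and $\alpha$ maps $a,b$ into the span of $ab,ba$ while $\beta$ fixes the grading, one checks that $\beta\alpha$ sends $a,b$ into $A_3=\ku\,aba$; hence $\as\beta\alpha$ and $\bs\beta\alpha$ vanish on all of $A_+$ (they would only be nonzero on degree-$1$ elements, where $\beta\alpha$ is zero). Therefore $c\cdot\mf{x}=0=c\cdot\mf{y}$ already at the cochain level, and a fortiori in cohomology.

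Next, for $\mf{z}$ I would use the $q=2$ case of \eqref{ec:ActionDualNumbers}, namely $c\cdot(f_1\ot f_2)=f_1\beta\alpha\ot f_2\beta + f_1\beta\ot f_2\beta\alpha$, applied to $\bs\ot\ab+\ba\ot\bs$. Again the terms involving $\beta\alpha$ on a degree-$1$ slot (here $\bs\beta\alpha$ and, in the second summand, $\ba\beta$ paired with $\bs\beta\alpha$) vanish, while $\ab\beta\alpha$ and $\ba\beta\alpha$ are forms on $A_4$ which is zero in $A$, so those drop out too; the only potentially surviving contributions come from $\bs\beta\ot(\ab)\beta\alpha$-type pairings, which I expect to collapse, leaving a $2$-cochain that is cohomologous to a combination of $\as\ot\as$ and $\bs\ot\bs$. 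Using $\beta(a)=-b$, $\beta(b)=-a$ one gets $\bs\beta=-\as$ and $\as\beta=-\bs$ on $A_1$; tracking signs through the surviving term and comparing with the representatives $\as\ot\as$ of $\mf{x}^2$ and $\bs\ot\bs$ of $\mf{y}^2$ should yield $c\cdot\mf{z}=\mf{x}^2-\mf{y}^2$ in $E_2(A)$, possibly after subtracting an explicit coboundary $\lD_2(\text{something})$.

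The main obstacle is the bookkeeping in the $\mf{z}$ computation: one must be careful that \eqref{ec:ActionDualNumbers} only gives a representative of $c\cdot\mf{z}$ up to $2$-coboundaries, so after evaluating the raw cochain one has to identify its class inside $E_2(A)=\ku\{\mf{x}^2,\mf{y}^2,\mf{z}\}$, and this requires either a clean direct evaluation on the basis $\cA_+$ of $A_+$ or an explicit coboundary correction via the formulas \eqref{eq:lD en E1}. The sign conventions (the overall minus relating $\lD$ to $\Hom_B(\partial,\ku)$, and the minus signs in $\beta$) are the places where an error is easiest to make, so I would double-check the final identity by pairing $c\cdot\mf{z}$ against the dual basis of $A_+^{(2)}$ and confirming it agrees with $\as\ot\as-\bs\ot\bs$ modulo $\Imm\lD_1$.
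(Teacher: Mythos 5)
Your strategy is the same as the paper's: evaluate \eqref{ec:ActionDualNumbers} on the explicit cocycle representatives $\as$, $\bs$ and $w=\bs\ot\ab+\ba\ot\bs$ chosen in the proof of Theorem \ref{thm:EA algebra structure}, using \eqref{eq:DefinitionAlphaBeta}. The degree-one conclusion is fine, but for a slightly wrong reason: $\beta\alpha$ raises the degree by exactly one, so it sends $a,b$ into $A_2$ (indeed $\beta\alpha(a)=-ba$ and $\beta\alpha(b)=-ab$), not into $A_3$; what actually makes $\as\beta\alpha=\bs\beta\alpha=0$ is only that $\beta\alpha(A_+)\subseteq A_{\geq 2}$, where $\as$ and $\bs$ vanish.

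The computation of $c\cdot\mf{z}$, which is the substance of the lemma, is never carried out, and the two computational facts you propose to base it on are wrong. First, the $q=2$ case of \eqref{ec:ActionDualNumbers} is $c\cdot(f_1\ot f_2)=f_1\beta\alpha\ot f_2\beta+f_1\ot f_2\beta\alpha$ (factors to the left of the $\beta\alpha$-slot are left untouched), not $f_1\beta\alpha\ot f_2\beta+f_1\beta\ot f_2\beta\alpha$. Second, and more seriously, $\ab\beta\alpha$ and $\ba\beta\alpha$ do not ``drop out'': since $\beta\alpha$ maps $A_1$ onto $A_2$ as above, one has $\ab\beta\alpha=-\bs$ and $\ba\beta\alpha=-\as$, and these are precisely the terms that survive. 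With the correct formula, and using $\bs\beta\alpha=0$, $\bs\beta=-\as$, one gets $c\cdot w=\bs\ot\ab\beta\alpha+\ba\beta\alpha\ot\bs\beta=-\bs\ot\bs+\as\ot\as$, so $c\cdot\mf{z}=\mf{x}^2-\mf{y}^2$ already at the cochain level, with no coboundary correction needed. If instead you discard the $\ab\beta\alpha$, $\ba\beta\alpha$ terms as your sketch prescribes, nothing survives; and if you follow your mis-stated formula literally you land on $\as\ot\bs+\as\ot\as$, whose class is $\mf{x}^2$, not $\mf{x}^2-\mf{y}^2$. As written, the key identity is only asserted (``should yield''), and the facts offered in support of it would derail the computation, so this is a genuine gap.
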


\begin{proof}
Recall that $\mf{x}$ is the cohomology class of $p_a$, see the proof of Theorem \ref{thm:EA algebra structure}. Hence  $c\cdot \mf{x}$ is the cohomology class of $c\cdot p_a$. Note that $c\cdot p_a=-p_a\beta\alpha=0$, since  by \eqref{eq:DefinitionAlphaBeta} the image of $\alpha\beta$ is included into $A_{\geq 2}$,  the square of the augmentation ideal $A_+$, and $p_a$ vanishes on this ideal by definition. It follows that $c\cdot \mf{x}=0$. The equation $c\cdot \mf{y}=0$ can be proved in a similar way.

Let  $w=\bs\ot\ab+\ba\ot\bs$. Taking into account the equation  \eqref{ec:ActionDualNumbers} we have
\begin{align*}
c\cdot w=p_b\ot p_{ab}\beta\alpha+p_b\beta\alpha\ot p_{ab}\circ\beta+p_{ba}\ot p_b\beta\alpha+p_{ba}\beta\alpha\ot p_b\beta.
\end{align*}
We have seen that $p_b\beta\alpha=0$. By equation \eqref{eq:DefinitionAlphaBeta}, we get $p_{ab}\beta\alpha=-p_b$ and $p_{ba}\beta\alpha=-p_a$. On the other hand, $p_b\beta=-p_a$. Therefore
$ c\cdot w=p_a\ot p_a-p_b\ot p_b.$
As $\mf{z}$ is the cohomology class of $w$, we get $c\cdot \mf{z}=\mf{x}^2-\mf{y}^2$.
\end{proof}

\begin{theorem}\label{thm:R action}
Let $\mf{t}\in\{ \mf{x},\mf{y}\}$. The   $R$-action on $E(A)$ satisfies  the relations
\begin{gather*}
c\cdot(\mf{t}^{l}\,\mf{z}^{2k})=0\quad\text{and}\quad c\cdot(\mf{t}^{l}\,\mf{z}^{2k+1})=\mf{t}^{l}(\mf{x}^2-\mf{y}^2)\mf{z}^{2k}.
\end{gather*}
\end{theorem}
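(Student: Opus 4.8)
The plan is to reduce everything to the action on the generators, which was just computed in the previous lemma, together with the $K_2$-algebra description of $E(A)$ from Theorem \ref{thm:EA algebra structure}. First I would recall that the $R$-action on $\Ou{*}{A}$ induced by \eqref{ec:ActionDualNumbers} makes $E(A)$ a module over $R=\ku\langle c\mid c^2\rangle$, and—crucially—that $c$ acts as a braided (twisted) derivation rather than an ordinary one. Concretely, from \eqref{ec:ActionDualNumbers} the element $c$ acts on $\Ou{*}{A}$ by the ``Leibniz-type'' rule $c\cdot(\mf{u}\,\mf{v})=(c\cdot\mf{u})(\beta\cdot\mf{v})\pm\mf{u}\,(c\cdot\mf{v})$, where $\beta$ is the algebra automorphism of $A$ with $\beta(a)=-b$, $\beta(b)=-a$ from \eqref{eq:DefinitionAlphaBeta}. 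So the first step is to make this skew-Leibniz rule precise on cohomology classes and record how $\beta$ acts on the generators: $\beta\cdot\mf{x}=-\mf{y}$, $\beta\cdot\mf{y}=-\mf{x}$, and $\beta\cdot\mf{z}=\mf{z}$ (the latter because $\beta^{\ot 2}$ fixes the cocycle $\bs\ot\ab+\ba\ot\bs$ up to sign, using $\beta^2=1$ from \eqref{eq:properties of alpha and beta}; one checks this directly on $w$). Note also that $\mf{x}^2-\mf{y}^2$ is $\beta$-invariant, which is what makes the formulas clean.

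Second, using the previous lemma ($c\cdot\mf{x}=c\cdot\mf{y}=0$ and $c\cdot\mf{z}=\mf{x}^2-\mf{y}^2$) together with the skew-Leibniz rule, I would compute $c\cdot\mf{z}^2$. Since $c\cdot\mf{z}^2=(c\cdot\mf{z})(\beta\cdot\mf{z})\pm\mf{z}(c\cdot\mf{z})=(\mf{x}^2-\mf{y}^2)\mf{z}\pm\mf{z}(\mf{x}^2-\mf{y}^2)$, I need to know how $\mf{z}$ commutes with $\mf{x}^2-\mf{y}^2$ in $E(A)$ and what the sign is. From the presentation in Theorem \ref{thm:EA algebra structure} one has $\mf{zx}=-\mf{yz}$ and $\mf{xz}=-\mf{zy}$, hence $\mf{z}\mf{x}^2=\mf{y}^2\mf{z}$ and $\mf{z}\mf{y}^2=\mf{x}^2\mf{z}$, so $\mf{z}(\mf{x}^2-\mf{y}^2)=-(\mf{x}^2-\mf{y}^2)\mf{z}$. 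Combining this with the Koszul sign $(-1)^{2\cdot 2}=+1$ for commuting two degree-$2$ classes in the Leibniz rule, the two terms cancel and we get $c\cdot\mf{z}^2=0$. This is the key computation; everything else is bookkeeping.

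Third, I would run an induction on $k$. The base cases $k=0$ are exactly the previous lemma (for $\mf{z}^1$) and the trivial statement $c\cdot\mf{t}^l=0$ (which follows from $c\cdot\mf{t}=0$ and the Leibniz rule, since each term contains a factor $c\cdot\mf{t}=0$; one must be slightly careful that $\beta\cdot\mf{t}=\mp\mf{t}'$ is still killed after the induction, but $c\cdot(\beta\cdot\mf{t})=0$ too, so it is fine). For the inductive step, write $\mf{t}^l\mf{z}^{2k}=(\mf{t}^l\mf{z}^{2k-2})\mf{z}^2$ and apply the skew-Leibniz rule: $c\cdot(\mf{t}^l\mf{z}^{2k})=(c\cdot(\mf{t}^l\mf{z}^{2k-2}))(\beta\cdot\mf{z}^2)\pm(\mf{t}^l\mf{z}^{2k-2})(c\cdot\mf{z}^2)$; the first term vanishes by the inductive hypothesis and the second by Step two, giving $c\cdot(\mf{t}^l\mf{z}^{2k})=0$. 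For the odd case, write $\mf{t}^l\mf{z}^{2k+1}=(\mf{t}^l\mf{z}^{2k})\mf{z}$ and apply the rule once more: the term involving $c\cdot(\mf{t}^l\mf{z}^{2k})$ drops out by the even case just proved, and the surviving term is $\pm(\mf{t}^l\mf{z}^{2k})(c\cdot\mf{z})=\pm\mf{t}^l\mf{z}^{2k}(\mf{x}^2-\mf{y}^2)$; after moving $\mf{x}^2-\mf{y}^2$ past $\mf{z}^{2k}$ using $\mf{z}(\mf{x}^2-\mf{y}^2)=-(\mf{x}^2-\mf{y}^2)\mf{z}$ an even number of times and tracking the Koszul signs, one lands on $\mf{t}^l(\mf{x}^2-\mf{y}^2)\mf{z}^{2k}$, as claimed.

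The main obstacle I anticipate is pinning down the exact form of the skew-Leibniz rule and all the signs: the action \eqref{ec:ActionDualNumbers} is a sum over tensor positions with a $\beta\alpha$ in slot $i$ and $\beta$'s after it, so when one passes from the cochain level to the cohomology level and factors a product, one must verify that the ``$\beta$ after the cut'' precisely becomes the algebra automorphism $\beta$ acting on the corresponding cohomology class, and that the sign $(-1)^{\deg}$ Koszul contribution from the bar complex differential interacts correctly with the graded-commutativity relations of $E(A)$. Once the identity $c\cdot(\mf{u}\mf{v})=(c\cdot\mf{u})(\beta\cdot\mf{v})+(-1)^{|\mf u|}\mf{u}(c\cdot\mf{v})$ is established and the relations $\mf{z}(\mf{x}^2-\mf{y}^2)=-(\mf{x}^2-\mf{y}^2)\mf{z}$, $\beta\cdot\mf{z}=\mf{z}$, $\beta\cdot(\mf{x}^2-\mf{y}^2)=\mf{x}^2-\mf{y}^2$ are recorded, the induction is purely formal.
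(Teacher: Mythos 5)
Your overall strategy is exactly the paper's: establish a twisted Leibniz rule for the $c$-action coming from \eqref{ec:ActionDualNumbers}, verify $\beta_2(\mf{z})=\mf{z}$ on cohomology, show $c\cdot\mf{z}^2=0$ via the anticommutation $\mf{z}(\mf{x}^2-\mf{y}^2)=-(\mf{x}^2-\mf{y}^2)\mf{z}$, and then reduce everything else to the generators. However, the one formula you commit to at the end, $c\cdot(\mf{u}\mf{v})=(c\cdot\mf{u})(\beta\cdot\mf{v})+(-1)^{|\mf u|}\mf{u}(c\cdot\mf{v})$, carries a Koszul sign that is not there. Splitting the sum in \eqref{ec:ActionDualNumbers} at any tensor position gives, on the nose,
\[
c\cdot(\mf{u}\,\mf{v})=(c\cdot\mf{u})\,\beta(\mf{v})+\mf{u}\,(c\cdot\mf{v}),
\]
with no sign: the operator is a plain sum over positions of ``$\beta\alpha$ in slot $i$, $\beta$ afterwards,'' not a graded derivation of the DG algebra. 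This matters. With your signed rule, the final step
$c\cdot(\mf{t}^{l}\mf{z}^{2k+1})=\pm(\mf{t}^{l}\mf{z}^{2k})(c\cdot\mf{z})$
picks up $(-1)^{l}$, so for odd $l$ you would land on $-\mf{t}^{l}(\mf{x}^2-\mf{y}^2)\mf{z}^{2k}$, contradicting the statement you are trying to prove. All the other computations survive because the factor to the left of the ``cut'' happens to have even degree ($\mf{z}$, $\mf{z}^{2k}$) or because both factors are killed by $c$; only the odd-$l$ case exposes the error. The fix is simply to derive the rule directly from \eqref{ec:ActionDualNumbers} rather than guessing the sign.

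A second, non-load-bearing slip: $\mf{x}^2-\mf{y}^2$ is not $\beta$-invariant. Since $p_a\beta=-p_b$ and $p_b\beta=-p_a$, one gets $\beta_2(\mf{x}^2)=\mf{y}^2$ and $\beta_2(\mf{y}^2)=\mf{x}^2$, so $\beta_2(\mf{x}^2-\mf{y}^2)=-(\mf{x}^2-\mf{y}^2)$. You never actually use this invariance in the computations as written, but it is another symptom of not having pinned down the $\beta$-action, which is precisely where the proof lives. Once the unsigned Leibniz rule and the correct $\beta$-values are recorded, your induction goes through exactly as in the paper.
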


\begin{proof}
Clearly, $c\cdot 1=0$, since $\alpha(1)=0$. As $\beta$ is a bijective morphism of algebras, $\beta'_q=\beta^ {\ot q}=\beta_q$ induces an automorphism  of $E_q(A)$ that will be denoted  by $\beta_q$ as well. By \eqref{ec:ActionDualNumbers} we have
\begin{align}\label{Yoneda}
c\cdot(\mf{w'}\mf{w''})=\mf{w'}(c\cdot\mf{w''})+(c\cdot\mf{w'})\,{\beta}_q(\mf{w''})
\end{align}
for any homogeneous elements $\mf{w'},\mf{w''}\in E(A)$. Therefore,  $c\cdot(\mf{w'w''})=\mf{w'}(c\cdot\mf{w''})$, provided that $c\cdot\mf{w'}=0$. Keeping this additional assumption on $\mf{w'}$, it follows by induction that  $c\cdot\mf{w'}^{\hspace*{0.5px}l}=0$, for any $l\geq0$.

Let us prove that $c\cdot \mf{z}^2=0$. Equation \eqref{Yoneda} yields
$$
c\cdot \mf{z}^2=\mf{z}(c\cdot \mf{z})+(c\cdot \mf{z}){\beta}_2'(\mf{z}).
$$
Let $w=\bs\ot\ab+\ba\ot\bs$. By equation \eqref{eq:DefinitionAlphaBeta} we get
\begin{align*}
w(\beta\ot\beta)&=\bs\beta\ot\ab\beta+\ba\beta\ot\bs\beta \\ &= -\as\ot\ba-\ab\ot\as\\
&=\bs\ot\ab+\ba\ot\bs-\delta_1(\aba).
\end{align*}
Hence $w(\beta\ot\beta)=w+\delta_1(\aba)$. This relation implies that  $\beta_2(\mf{z})=\mf{z}$. Since by Theorem \ref{thm:EA algebra structure} the equations $\mf{x}\mf{z}=-\mf{y}\mf{z}$ and $\mf{y}\mf{z}=-\mf{x}\mf{z}$ hold true, we get
$$
c\cdot \mf{z}^2=\mf{z}(\mf{x}^2-\mf{y}^2)+(\mf{x}^2-\mf{y}^2)\mf{z}=0.
$$
Consequently,  $c\cdot \mf{z}^{2k}=0$ for any $k\geq0$.  Furthermore,
$$
c\cdot \mf{z}^{2k+1}=\mf{z}^{2k}(c\cdot \mf{z})={ (\mf{x}^2-\mf{y}^2)\mf{z}^{2k}},
$$
where for the last equality we used the preceding lemma and the relation $\mf{z}^2(\mf{x}^2-\mf{y}^2)=(\mf{x}^2-\mf{y}^2)\mf{z}^2$.
By the same lemma, $c\cdot \mf{t}=0$. Thus, $c\cdot \mf{t}^l=0$ for any $l\geq0$. Since both $ \mf{t}^l$ and $\mf{z}^{2k}$ are annihilated by the action of $c$, we deduce that $c\cdot(\mf{t}^{l}\mf{z}^{2k})=0$. We conclude the proof by the following computation:
\begin{equation*}
c\cdot(\mf{t}^{l}\,\mf{z}^{2k+1})=\mf{t}^{l}\mf{z}^{2k}(c\cdot\mf{z})={\mf{t}^{l}\mf{z}^{2k}(\mf{x}^2-\mf{y}^2)} ={\mf{t}^{l}(\mf{x}^2-\mf{y}^2)\mf{z}^{2k}}. \qedhere
\end{equation*}
\end{proof}
\begin{obs}\label{obs:c-action}
 Let $l>0$. Since $\mf{x}\mf{y}=0=\mf{y}\mf{x}$, the action of $R$ on $E(A)$ satisfies the relations
 \[
 c\cdot(\mf{x}^{l}\,\mf{z}^{2k+1})=\mf{x}^{l+2}\mf{z}^{2k}\quad\text{and}\quad c\cdot(\mf{y}^{l}\,\mf{z}^{2k+1})=-\mf{y}^{l+2}\mf{z}^{2k},
 \]
 for any $n,l\in\N$. 
\end{obs}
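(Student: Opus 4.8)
The plan is to read this off directly from Theorem \ref{thm:R action}, specialising $\mf{t}$ first to $\mf{x}$ and then to $\mf{y}$, and simplifying the right-hand side using the relations $\mf{x}\mf{y}=0=\mf{y}\mf{x}$ from Theorem \ref{thm:EA algebra structure}. Concretely, Theorem \ref{thm:R action} with $\mf{t}=\mf{x}$ gives $c\cdot(\mf{x}^{l}\mf{z}^{2k+1})=\mf{x}^{l}(\mf{x}^2-\mf{y}^2)\mf{z}^{2k}$, so I would expand $\mf{x}^{l}(\mf{x}^2-\mf{y}^2)=\mf{x}^{l+2}-\mf{x}^{l}\mf{y}^2$ and observe that, since $l>0$, the second summand equals $\mf{x}^{l-1}(\mf{x}\mf{y})\mf{y}=0$. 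This yields $c\cdot(\mf{x}^{l}\mf{z}^{2k+1})=\mf{x}^{l+2}\mf{z}^{2k}$, which is the first asserted relation.

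The computation for $\mf{y}$ is entirely symmetric: Theorem \ref{thm:R action} gives $c\cdot(\mf{y}^{l}\mf{z}^{2k+1})=\mf{y}^{l}(\mf{x}^2-\mf{y}^2)\mf{z}^{2k}$, and the cross term $\mf{y}^{l}\mf{x}^2=\mf{y}^{l-1}(\mf{y}\mf{x})\mf{x}$ vanishes because $\mf{y}\mf{x}=0$ and $l>0$, leaving $c\cdot(\mf{y}^{l}\mf{z}^{2k+1})=-\mf{y}^{l+2}\mf{z}^{2k}$. There is no real obstacle here — the remark is just a bookkeeping consequence of Theorem \ref{thm:R action}, recorded in this form because it is the version that will be convenient in the sequel. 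The only point to keep track of is the hypothesis $l>0$, which is exactly what is needed in order to factor an $\mf{x}\mf{y}$ (respectively $\mf{y}\mf{x}$) out of the vanishing cross term.
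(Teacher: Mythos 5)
Your argument is correct and is exactly the paper's (implicit) justification: the remark is obtained by specializing Theorem \ref{thm:R action} to $\mf{t}=\mf{x}$ and $\mf{t}=\mf{y}$ and killing the cross terms $\mf{x}^{l}\mf{y}^2$ and $\mf{y}^{l}\mf{x}^2$ via $\mf{x}\mf{y}=0=\mf{y}\mf{x}$, which is precisely where the hypothesis $l>0$ enters. Nothing further is needed.
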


\section{The cohomology ring of the 12-dimensional Fomin-Kirillov algebra}

In this section  we shall prove our main result, the computation of the Yoneda ring $E(B)$. We start by describing the subalgebra $\widetilde{E}(B)$ generated by $E_1(B)$.  On the dual vector space of $B_+$, we take the dual basis $\{q_x\}_{x\in\cB_+}$ of $\cB_+=\cB\setminus\{1\}$. Then $E_1(B)$ is spanned by the cohomology classes $[q_a], [q_b]$ and $[q_c]$. In particular, they generate $\widetilde{E}(B)$ as an algebra.  
Some obvious relations between these generators arise from the fact that $\widetilde{E}(B)$ is graded braided commutative.

\begin{lema}\label{BraidedCommutative}
The Yoneda ring $E(B)$ is  graded braided commutative. In particular, $[q_x] [q_y]=[q_z] [q_x]$ for any permutation $(x,y,z)$ of $(a,b,c)$.
\end{lema}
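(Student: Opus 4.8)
The plan is to deduce the braided graded commutativity of $E(B)$ from the general result that the cohomology ring of a Nichols algebra is braided commutative, namely \cite[Corollary 3.13]{MPSW}, together with the identification of $B=\cFK_3$ with the Nichols algebra $\mathfrak B(\cO)$ recalled in \S\ref{subsec:B is a twist}. More precisely, since $B$ is a Nichols algebra in $\ydstres$, its Yoneda ring $E(B)$ carries a natural structure of braided graded algebra in $\ydstres$, and by the cited result this braided algebra is graded commutative in the sense spelled out in the Preliminaries: for homogeneous $x\in E(B)_g$ of cohomological degree $p$ and $y\in E(B)$ of cohomological degree $q$ one has $x\cdot y = (-1)^{pq}\,({}^{g}y)\cdot x$. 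So the first step is simply to record that $E(B)$ inherits the Yetter-Drinfeld structure (its degree-$n$ piece is $\Omega^n$-cohomology, hence a subquotient of $V^{(n)}$, which is an object of $\ydstres$ as in \S on the normalized bar complex of an algebra in $\ydg$) and to invoke \cite[Corollary 3.13]{MPSW}.

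The second step is to make the ``In particular'' explicit for the degree-one generators $[q_a],[q_b],[q_c]$. Here I would use that $q_t$, dual to the basis element $x_t\in B_1$, lies in the degree-$(1,t)$ component of $V$: indeed the $\ku\Si$-coaction on $B$ has $\rho(x_t)=t\ot x_t$, so the dual coaction places $q_t$ in the component indexed by $t^{-1}=t$ (a transposition has order two). Hence $[q_t]\in E(B)_{1,t}$. Applying the braided graded commutativity formula to $x=[q_s]\in E(B)_{1,s}$ and $y=[q_t]$, with $p=q=1$, gives $[q_s][q_t] = (-1)\,({}^{s}[q_t])\cdot[q_s]$. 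Now ${}^{s}[q_t]$ is computed from the Yetter-Drinfeld action: ${}^{s}q_t = \sgn(s)\,q_{sts^{-1}} = -\,q_{sts^{-1}}$, because the action on $V(\cO)$ is ${}^{g}x_t=\sgn(g)x_{gtg^{-1}}$ and dualizing introduces the sign $\sgn(s^{-1})=\sgn(s)=-1$ for a transposition. The two signs cancel, yielding $[q_s][q_t] = [q_{sts^{-1}}]\,[q_s]$. For any permutation $(x,y,z)$ of $(a,b,c)$ — equivalently of the three transpositions $(12),(23),(13)$ — one checks directly that if $x=x_s$, $y=x_t$ then $x_{sts^{-1}}=z$; e.g.\ $(12)(23)(12)=(13)$, $(23)(13)(23)=(12)$, $(13)(12)(13)=(23)$, and these conjugations are symmetric enough that the three transpositions are permuted cyclically. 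This gives exactly $[q_x][q_y]=[q_z][q_x]$.

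The main obstacle, such as it is, is bookkeeping rather than conceptual: one must be careful about (i) the sign conventions in \cite[Corollary 3.13]{MPSW} versus the braided graded commutativity convention fixed in the Preliminaries, making sure the Koszul sign $(-1)^{pq}$ and the sign coming from the $\Si$-action on $V(\cO)$ are both accounted for, and (ii) verifying that the dual of $x_t$ sits in the $\ku\Si$-homogeneous component indexed by $t$ and that the contragredient action carries the extra sign $\sgn(g)$ — a small computation using the description of the Yetter-Drinfeld structure on the linear dual from \cite[Section 2]{AG}. Once those two sign computations are pinned down, the statement follows formally, and in particular no explicit cocycle manipulation in the bar complex is needed for this lemma.
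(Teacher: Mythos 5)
Your proposal is correct and follows essentially the same route as the paper: invoke \cite[Corollary 3.13]{MPSW} for the braided graded commutativity of the Yoneda ring of a Nichols algebra, then specialize to the degree-one classes $[q_x]$, using that ${}^{s}q_t=\sgn(s)\,q_{sts^{-1}}=-q_{sts^{-1}}$ for transpositions $s,t$, so the Koszul sign and the sign of the action cancel and give $[q_x][q_y]=[q_z][q_x]$. Your sign bookkeeping matches the paper's computation (e.g. ${}^{(12)}q_b=-q_c$), so there is nothing to correct.
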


\begin{proof}
Recall that, by definition, a graded algebra $\La$ in the category of Yetter-Drinfeld $\ku{\Si}$-modules is  graded braided commutative if and only if 
$ ^gw\cdot v =(-1)^{nm}{v}\cdot {w}$, for any  ${v}\in\La_n$ and ${w}\in\La_m$ so that $v$ is homogeneous of degree $g$.

By  \cite{MPSW} the cohomology ring of a Nichols bialgebra is graded  braided commutative. Thus, in particular, $E(B)$ is such an algebra, so $ [q_a] [q_b]=-{}^{(12)}[q_b] [q_a]$. Since the $\Si$-action on $E_1(B)$ is induced by that one of the dual of $B$ and $^{(12)}q_b=-q_c$, we have just proved the required relation in the particular case when $(x, y, z)=(a, b,c)$. For any other permutation  $(x, y, z)$ we can prove the corresponding relation in a similar way.
\end{proof}

\begin{obs}
We know that $E_1(B)$ is an Yetter-Drinfeld $\ku\Si$-module. By the proof of the preceding lemma,  the cohomology classes $\a=[q_a]$, $\b=[q_b]$ and $\cc=[q_c]$ are the elements of a vector basis on $E_1(B)$. Moreover, they are homogeneous of the same degree as $a$, $b$ and $c$, the generators of $B$. Since the actions of $\Si$ on $\{\a,\b,\cc\}$ and $\{a,b,c\}$ are identical, it follows that we can identify $E_1(B)$ with the Yetter-Drinfeld module $B_1=V(\T3)$ in such a way that  $\a$, $\b$ and $\cc$ correspond to $a$, $b$ and $c$.
\end{obs}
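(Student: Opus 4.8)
The plan is to realise $E_1(B)$, together with its induced Yetter-Drinfeld structure, as the linear dual of $B_1=V(\T3)$, and then to observe that $V(\T3)$ is self-dual in $\ydg$; composing the two identifications will then send $\a,\b,\cc$ to $a,b,c$.

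First I would describe $E_1(B)$ as a graded object of $\ydg$. Working with the reduced normalized bar complex \eqref{NormalizedBarComplex} for the finite-dimensional algebra $B$, with $V$ the $\ku$-linear dual of $B_+$ (as in \S\ref{SubSecBarResolution}), the map $\lD^0$ vanishes because the $B$-action on $\ku$ is trivial, so $E_1(B)=\Ker\lD^1$; and by the description of $\lD^1$ in \S\ref{SubSecBarResolution}, a form $f\in V$ is a $1$-cocycle exactly when $f(\lambda_1\lambda_2)=0$ for all $\lambda_1,\lambda_2\in B_+$, i.e.\ when $f$ annihilates $B_+^2$. Since $B=\B(\T3)$ is a Nichols algebra generated in degree $1$, every homogeneous component $B_n$ with $n\geq2$ lies in $B_+^2$ while $B_1\cap B_+^2=0$, so $B_+/B_+^2\cong B_1$; as the differentials of \eqref{NormalizedBarComplex} are morphisms in $\ydg$, this produces an isomorphism $E_1(B)\cong\big(B_1\big)^{*}$ of graded Yetter-Drinfeld modules. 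In particular $\dim E_1(B)=3$, and since the representatives $q_a,q_b,q_c$ restrict on $B_1$ to the basis dual to $\{a,b,c\}$, the classes $[q_a],[q_b],[q_c]$ form a basis of $E_1(B)$; this is the basis assertion recalled in the Remark.

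It then remains to identify $\big(B_1\big)^{*}=V(\T3)^{*}$ with $V(\T3)$ itself. Write $\xi_t$ for the functional on $V(\T3)$ dual to the basis vector $x_t$. Using the description of the dual of an object of $\ydg$ recalled in \cite[Section 2]{AG}, I would check that ${}^g\xi_t=\sgn(g)\,\xi_{gtg^{-1}}$ — which comes out right because $\sgn(g^{-1})=\sgn(g)$ and conjugation acts the same way on the index of each module — and that $\xi_t$ is homogeneous of group-degree $t^{-1}=t$, matching $\rho(x_t)=t\ot x_t$. Hence $\xi_t\mapsto x_t$ is an isomorphism $V(\T3)^{*}\to V(\T3)$ in $\ydg$, and the composite $E_1(B)\cong\big(B_1\big)^{*}\cong V(\T3)$ carries $\a=[q_a]$, $\b=[q_b]$, $\cc=[q_c]$ to $a,b,c$. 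The only step that is more than bookkeeping is this self-duality of $V(\T3)$, and it is not a real obstacle: it works precisely because the sign character is its own inverse and transpositions are involutions, so the signs and the conjugation indices match on both sides. No homological input beyond $\lD^0=0$ and the degree-$1$ generation of $B$ is needed.
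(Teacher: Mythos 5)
Your argument is correct and is essentially the justification the paper has in mind (and partly records in the proof of the preceding lemma): $E_1(B)=\Ker\lD^1$ is the space of functionals on $B_+$ vanishing on $B_+^2$, hence canonically the dual of $B_1$ as an object of $\ydg$, and the sign/conjugation bookkeeping shows $V(\T3)^*\cong V(\T3)$ with $\xi_t\mapsto x_t$. Your write-up simply makes explicit the self-duality step that the Remark leaves implicit; no gaps.
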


Our next goal is to show that there are no other relations in the presentation of $\widetilde{E}(B)$. For, we are going to investigate some properties of an arbitrary algebra that contains three elements satisfying the set of relations from Lemma \ref{BraidedCommutative}. More precisely, we are interested in computing the dimension of the homogeneous components of the subalgebra generated by these three elements.

 Let $\La=\oplus_{n\in\mathbb{N}}\La_n$ be a graded $\ku$-algebra. We assume that $\mathcal{S}= \{\a,\b,\cc\}\subseteq \La_1$ is a set  of nonzero elements  such that $ xy=zx$, for any permutation $(x,y,z)$ of $(\a,\b,\cc)$. For such a permutation  $(x,y,z)$
  \[
  xyz=zyx=xzx=zxz=yz^2=z^2y=yx^2=x^2y.
  \]
Let $\Gamma$ denote the subalgebra generated by $\cS$.

\begin{lema}\label{le:basis}
Let $\cS^1=\cS$ and $\cS^n:=\{\a^n,\b^n,\cc^n,\a^{n-1}\b,\a^{n-1}\cc,\a^{n-2}\b^2\}$, for any $n\geq 2$.  The set $\cS^n$ spans linearly the homogeneous component $\Gamma_n$, for any $n>0$.
\end{lema}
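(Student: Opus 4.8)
The plan is to argue by induction on $n$. For $n=1$ there is nothing to prove, and for $n=2$ the relations $xy=zx$ identify all nine products $xy$ with $x,y\in\cS$ with one of $\a^2,\b^2,\cc^2$, $\a\b\ (=\cc\a=\b\cc)$ or $\a\cc\ (=\b\a=\cc\b)$, and these lie in $\cS^2$. Assume now $n\geq2$ and that $\cS^n$ spans $\Gamma_n$. Since $\Gamma_{n+1}=\Gamma_n\cdot\Gamma_1=\Gamma_n\cdot\ku\cS$, it suffices to show that each of the eighteen products $sx$ with $s\in\cS^n$ and $x\in\cS$ is a linear combination of elements of $\cS^{n+1}$.

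Every reduction below uses only substitutions inside a word drawn from the two chains $\a\b=\cc\a=\b\cc$ and $\a\cc=\b\a=\cc\b$; informally, a generator may be slid past an adjacent different generator, the latter being replaced by the third generator. Twelve of the eighteen products are routine: the six of the form $\a^n x$, $\b^n\b$, $\cc^n\cc$, $\a^{n-1}\b\cdot\b$ already lie in $\cS^{n+1}$, and the other six — $\a^{n-1}\b\cdot\a$, $\a^{n-1}\b\cdot\cc$, $\a^{n-1}\cc\cdot\a$, $\a^{n-1}\cc\cdot\b$, $\a^{n-2}\b^2\cdot\a$, $\a^{n-2}\b^2\cdot\cc$ — reach $\cS^{n+1}$ after one to four such substitutions (for instance $\a^{n-1}\b\cdot\cc=\a^{n-1}(\b\cc)=\a^{n-1}(\cc\a)=\a^{n-1}(\a\b)=\a^n\b$, and $\a^{n-2}\b^2\cdot\a=\a^{n-2}(\b^2\a)=\a^{n-2}(\a\b^2)=\a^{n-1}\b^2$, the last substitution being a degree-three identity recalled before the statement).

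The six resistant products are $\b^n\a$, $\b^n\cc$, $\cc^n\a$, $\cc^n\b$, $\a^{n-1}\cc^2$ and $\a^{n-2}\b^3$. For these I would first record the auxiliary identity $\cc^m\a=\a\b^m$, valid for all $m\geq1$: the case $m=1$ is a defining relation, and for the inductive step one writes $\cc^m\a=\cc^{m-1}(\cc\a)=\cc^{m-1}(\a\b)$ and applies the hypothesis to the prefix $\cc^{m-1}\a$. Sliding a single generator through a block of equal generators gives $\b^n\a=\a\cc^n$, $\b^n\cc=\cc\a^n$ and $\cc^n\b=\b\a^n$, and then $\a\cc^n$ equals $\cc^n\a$ for $n$ even and $\cc^n\b$ for $n$ odd, while $\cc\a^n$ and $\b\a^n$ each equal $\a^n\b$ or $\a^n\cc$ according to the parity of $n$. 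Combined with $\cc^m\a=\a\b^m$, this sends $\b^n\a,\b^n\cc,\cc^n\a,\cc^n\b$ either to $\a^n\b,\a^n\cc\in\cS^{n+1}$ or to $\a\b^n$. Finally $\a\b^n$, and likewise $\a^{n-1}\cc^2=\a^{n-2}(\a\cc^2)=\a^{n-2}(\a\b^2)=\a^{n-1}\b^2$ and $\a^{n-2}\b^3$, is brought into $\cS^{n+1}$ using the degree-three identities $\a\b^2=\a\cc^2$ and $\cc^2\b=\a^2\b$: explicitly, $\a\b^n=(\a\b^2)\b^{n-2}=(\a\cc^2)\b^{n-2}=\a(\cc^2\b)\b^{n-3}=\a(\a^2\b)\b^{n-3}=\a^3\b^{n-2}$, and iterating this one reaches $\a^n\b$ if $n$ is odd and $\a^{n-1}\b^2$ if $n$ is even; the same trick turns $\a^{n-2}\b^3$ into $\a^n\b$ (the degenerate cases $n=2$, where e.g.\ $\a^{n-2}\b^3=\b^3$, being immediate).

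I expect the main difficulty to be organisational rather than conceptual: keeping track of the case split over the eighteen products and over the parity of $n$, and recognising that $\a\b^n$ (for $n$ even) cannot be reduced to $\a^{n-1}\b^2$ by ``sliding'' alone — every such attempt merely cycles among words equivalent to $\cc^n\a$ — so that one genuinely has to route through the degree-three identities $\a\b^2=\a\cc^2$ and $\cc^2\b=\a^2\b$. Once the identity $\cc^m\a=\a\b^m$, these crossing moves, and the parity bookkeeping are in hand, the eighteen verifications, and with them the induction, are routine.
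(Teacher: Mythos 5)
Your argument is correct, but it is organized around a different decomposition than the paper's. You write $\Gamma_{n+1}=\Gamma_n\cdot\ku\cS$ and reduce all eighteen products $sx$ with $s\in\cS^n$, $x\in\cS$, which forces you to establish the auxiliary commutation identities $\cc^m\a=\a\b^m$, $\b^m\a=\a\cc^m$, $\b^m\cc=\cc\a^m$, $\cc^m\b=\b\a^m$ and to split on the parity of $n$ throughout; I checked the eighteen cases and the parity bookkeeping, and they all close up. The paper instead peels a letter off the \emph{left}: given a monomial $\omega=x_1\cdots x_{n+1}$ with at least two distinct factors, the degree-two relations (and centrality of squares) let one assume the first two factors are distinct, hence that $\omega=\a u\,x_3\cdots x_{n+1}$ with $u\in\{\b,\cc\}$; applying the induction hypothesis to the length-$n$ tail leaves only the six words $\a\cdot\cS^n$, of which only $\a\b^n$ and $\a\cc^n$ need further work. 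Both routes bottom out in the same computation --- the reduction of $\a\b^n$ via $\a\b^3=\a\cc^2\b=\a^3\b$, which is your iteration $\a\b^n=\a^3\b^{n-2}=\cdots$ and the paper's induction $\a\b^{2p+1}=\a^{2p+1}\b$, $\a\b^{2p}=\a^{2p-1}\b^2$ --- so the mathematical content is essentially the same; the left-factoring just shrinks the case analysis from eighteen products to two and dispenses with the parity splits. One quibble with your closing remark: the degree-three identities are themselves consequences of sliding (e.g.\ $\a\b^2=\cc\a\b=\cc\b\cc=\a\cc^2$), so the claim that $\a\b^n$ cannot reach $\a^{n-1}\b^2$ by sliding alone holds only for the restricted moves that shuttle the single distinguished letter through the block; this is harmless, as it is commentary rather than a step of the proof.
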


 \begin{proof}
For $n=1$ we have nothing to prove. We show by induction on $n\geq 2$ that any monomial $\omega=x_1\cdots x_n$ in $\Gamma_n$ belongs to $\cS^n$. The case $n=2$ and $x_1=x_2$ is clear, as $x_1^2\in\cS^2$. If $x_1\neq x_2$, then let $x_3$ such that $\cS=\{x_1,x_2,x_3\}$. Since $ x_1 x_2=x_2x_3=x_3x_1$ it follows that $\omega=\a u$, for a certain $u\in\cS\setminus\{\a\}$.  

Supposing that we have proved that $ \Gamma_n\subseteq\cS^n$, let us pick a monomial $\omega=x_1\cdots x_{n+1}$ in $\Gamma_{n+1}$. We can assume that at least two factors of $\omega$ are distinct; otherwise, $\omega$ is obviously an element of $\cS^n$. Henceforth, $\omega=x_1^px_{p+1}\cdots x_{n+1}$, with $ x_1\neq x_{p+1}$. Note that $x_1^px_{p+1}=x_{p+1}x_1^p$, if $p$ is even, and $x_1^p x_{p+1}=x_1 x_{p+1}x_1^{p-1}$, if $p$ is odd. Thus $\omega$ can be written as a product of $n+1$ elements of $\cS$, satisfying the additional property that the first two factors are not equal. By the case $n=2$,  we can rewrite $\omega$ such that its first factor is  $\a$. By the induction hypothesis it follows that
\[
 \omega\in\{\a^{n+1},\a\b^n,\a\cc^n,\a^{n}\b,\a^{n}\cc,\a^{n-1}\b^2\}
\]
It remains to prove that $\a\b^n$ and $\a\cc^n$ belong to $\cS^{n+1}$. To show that the former element is in $\cS^{n+1}$ we are going to prove the following two equations
\[
 \a\b^{2p+1}=\a^{2p+1}\b\qquad\text{and}\qquad \a\b^{2p}=\a^{2p-1}\b^2.
\]
Using the relations $x^2y=z^2y=yx^2=yz^2$, we get $\a\b^3=\a\b^2\b=\a\cc^2\b=\a\a^2\b=\a^3\b.$ So, by induction,
\[
 \a\b^{2p+1}=\a\b^{2p-1}\b^2=\a^{2p-1}\b^3=\a^{2p-2}\a\b^3=\a^{2p+1}\b.
\]
The second relation is a consequence of the first one: $\a\b^{2p}=\a\b^{2p-1}\b=\a^{2p-1}\b^2$. The fact that  $\a\cc^n\in\cS^{n+1}$can be proved in a similar way, remarking that $\a\cc^2=\a\b^2$.
 \end{proof}

 \begin{lema} \label{le:DimensionLambda}
 We keep the notation from the above lemma. Let us suppose in addition that:
 \begin{enumerate}
  \item[(i)] $\La$ is a $\mathbb{S}_3$-graded algebra, whose homogeneous component of degree $g$ is $\La_g$.
  \item[(ii)] $\a\in\La_{(12)}$, $\b\in\La_{(23)}$ and $\cc\in\La_{(13)}$;
  \item[(iii)] There is an algebra morphism $\theta:\La\to\ku$ such that $\theta(x)=1$  for all $x\in\cS$;
  \item[(iv)] For every $x\in\cS$ there is an algebra morphism $\theta_x:\La\to\ku$ such that $\theta_x(x)=1$ and $\theta_x(y)=0$, for $y\in \cS\setminus\{x\}$.
 \end{enumerate}
Under these conditions, $\cS^n$ is a basis of $\Gamma_n$, for any $n\geq 1$ (\,$\b^2$ is counted only once in $\cS^2$, of course).
 \end{lema}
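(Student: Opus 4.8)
The plan is to prove that the spanning set $\cS^n$ from Lemma~\ref{le:basis} is actually linearly independent, by exhibiting enough linear functionals on $\Gamma_n$ that separate its elements. Since Lemma~\ref{le:basis} already shows $\cS^n$ spans $\Gamma_n$, it suffices to produce, for each $n$, a collection of linear maps $\Gamma_n\to\ku$ whose values on the (at most six, or at most three when $n=1$) elements of $\cS^n$ form a matrix of full rank equal to $|\cS^n|$. Condition (iii) hands us one such functional, the restriction of $\theta$, which sends every element of $\cS^n$ to $1$; condition (iv) gives three more, the restrictions of $\theta_\a,\theta_\b,\theta_\cc$, which kill all pure powers except one. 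The remaining two basis elements $\a^{n-1}\b$ and $\a^{n-1}\cc$ (and, for $n=2$, $\a^2$, $\b^2$, $\cc^2$ together with the cross term) will be handled using the $\Si$-grading together with conditions (i), (ii).

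First I would deal with the low-degree cases $n=1$ and $n=2$ by hand: for $n=1$ the three functionals $\theta_\a,\theta_\b,\theta_\cc$ already show $\{\a,\b,\cc\}$ is independent, and for $n=2$ one checks directly, using the relations $xy=zx$ and the $\Si$-grading, that $\a^2,\b^2,\cc^2$ lie in distinct homogeneous components (degrees $e$? — no: $\a^2\in\La_{(12)^2}=\La_e$, so in fact all pure squares are in $\La_e$, and so is $\a\b=\cc\a\in\La_{(13)}$? let me instead argue that $\a\b\in\La_{(12)(23)}=\La_{(123)}$, which is a different component than $\La_e$). So $\cS^2=\{\a^2,\b^2,\cc^2,\a\b\}$ splits as three elements of $\La_e$ plus one of $\La_{(123)}$; the cross term is then automatically independent from the squares by the grading, and the three squares are separated by $\theta_\a,\theta_\b,\theta_\cc$. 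For general $n\geq 3$ I would first use the $\Si$-grading to split $\cS^n$ according to the degrees of its elements: $\a^n$ lies in $\La_{(12)^n}$, $\b^n$ in $\La_{(23)^n}$, $\cc^n$ in $\La_{(13)^n}$, while $\a^{n-1}\b$ lies in $\La_{(12)^{n-1}(23)}$, $\a^{n-1}\cc$ in $\La_{(12)^{n-1}(13)}$, and $\a^{n-2}\b^2$ in $\La_{(12)^{n-2}(23)^2}$. Working out these exponents in $\Si$ (distinguishing $n$ even from $n$ odd) shows that the six elements distribute among the homogeneous components so that each component contains at most the three pure powers plus possibly one mixed monomial; within each such block the functionals $\theta,\theta_\a,\theta_\b,\theta_\cc$ then separate the pure powers, and the mixed monomial is separated by the grading from any pure power not sharing its component.

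The one genuinely delicate point is the block containing $\a^{n-1}\b$ and $\a^{n-1}\cc$ (and when $n$ is even the element $\a^{n-2}\b^2$), since a priori these could collapse onto each other or onto a pure power lying in the same $\Si$-component. To separate $\a^{n-1}\b$ from $\a^{n-1}\cc$ I would observe that applying $\theta_\a$ kills both and applying $\theta_\b,\theta_\cc$ kills both as well, so pure scalar functionals are not enough — instead one should note $\a^{n-1}\b$ and $\a^{n-1}\cc$ actually lie in \emph{distinct} $\Si$-components, because $(12)^{n-1}(23)\neq (12)^{n-1}(13)$ in $\Si$ (equivalently $(23)\neq(13)$), so the grading alone separates them. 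The same argument places $\a^{n-2}\b^2$ in yet another component. Hence, after the grading decomposition, every block reduces to at most three pure powers, on which $\theta_\a,\theta_\b,\theta_\cc$ are independent. Assembling these block-wise independence statements yields that $\cS^n$ is linearly independent, hence a basis of $\Gamma_n$; combined with Lemma~\ref{le:basis} this finishes the proof. I expect the bookkeeping of the $\Si$-exponents across the parity of $n$ — and in particular checking that no two distinct elements of $\cS^n$ ever land in the same component unless they are both pure powers — to be the main obstacle, but it is a finite, elementary computation in $\Si$.
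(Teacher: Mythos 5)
Your overall strategy --- split $\cS^n$ into $\Si$-homogeneous blocks using (i)--(ii) and then separate elements by the characters $\theta,\theta_x$ --- is essentially the paper's, but the final assembly contains a false step. After the grading decomposition the blocks do \emph{not} consist of pure powers only: for odd $n$ one has $\{\a^n,\a^{n-2}\b^2\}\subseteq\La_{(12)}$, $\{\b^n,\a^{n-1}\b\}\subseteq\La_{(23)}$, $\{\cc^n,\a^{n-1}\cc\}\subseteq\La_{(13)}$, and for even $n\geq4$ the block $\La_e$ contains $\a^n,\b^n,\cc^n$ \emph{and} $\a^{n-2}\b^2$ (only $\a^{n-1}\b\in\La_{(123)}$ and $\a^{n-1}\cc\in\La_{(132)}$ sit alone in their components). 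So your concluding claim that ``after the grading decomposition, every block reduces to at most three pure powers, on which $\theta_\a,\theta_\b,\theta_\cc$ are independent'' is wrong, and with it the assertion that the functionals $\theta_x$ suffice inside a block: every $\theta_x$ vanishes on any mixed monomial (it involves two distinct generators), so these functionals cannot see the coefficient of $\a^{n-2}\b^2$, nor of $\a^{n-1}\b$ or $\a^{n-1}\cc$ in the odd case. The genuinely delicate point is therefore not separating $\a^{n-1}\b$ from $\a^{n-1}\cc$ --- the grading does that --- but separating a mixed monomial from the pure power(s) lying \emph{in the same component}, and that step is missing from your argument.

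The gap is easy to close, and the closing is exactly what the paper does: inside a block, apply the $\theta_x$'s to kill the coefficients of the pure powers (each $\theta_x$ equals $1$ on $x^n$ and $0$ on everything else in the block), and then apply $\theta$, which equals $1$ on \emph{every} monomial, to conclude that the remaining coefficient, that of the mixed monomial, also vanishes. Note in addition that $\theta$ is what guarantees the elements of $\cS^n$ are nonzero in the first place; for the singleton blocks (e.g.\ $\a^{n-1}\b\in\La_{(123)}$ with $n$ even) the grading only yields $\delta\,\a^{n-1}\b=0$, and you need $\theta(\a^{n-1}\b)=1$ to conclude $\delta=0$ --- your proposal never records this nonvanishing. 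Finally, a small slip: $\cS^2=\{\a^2,\b^2,\cc^2,\a\b,\a\cc\}$, not $\{\a^2,\b^2,\cc^2,\a\b\}$; the omitted element $\a\cc$ lies in $\La_{(132)}$ and is handled by the same component argument.
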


\begin{proof}
Let $\omega=x_1\cdots x_n$ be a monomial in $\Gamma_n$. Since $\theta(\omega)=1$ it follows that $\omega\neq 0$. In particular all elements of $\cS^n$ are nonzero. We know that $\cS^n$ generates $\Gamma_n$. Let us assume that $n$ is odd. Then $\{\a^n,\a^{n-2}\b^2\}\subseteq\La_{(12)}$. On the other hand, $\{\b^n,\a^{n-1}\b\}\subseteq \La_{(23)}$ and $\{\cc^n,\a^{n-1}\cc\}\subseteq\La_{(13)}$. Hence $\cS^n$ is linearly independent if and only if the above three sets are so. 

Let $\mu \a^n +\nu \a^{n-2}\b^2=0$ be a linear combination with coefficients in $\ku$. By applying $\theta$ and $\theta_a$ to the left-hand side of this relation we get $\mu=0$ and $\mu+\nu=0$. Hence $\mu=\nu=0$. Proceeding in a similar way one shows that the other two sets are linearly independent.

Now let us assume that $n$ is even and $n\geq 4$ (the case $n=2$ can be handled in an analogous way). For such an $n$ we have $\{\a^n,\b^n,\cc^n,\a^{n-2}\b^2\}\subseteq \La_e$. Since $\a^{n-1}\b\in\La_{(123)}$ and $\a^{n-1}\cc\in\La_{(132)}$ it is enough to show that any linear combination $\mu \a^n+\nu \b^n+\gamma \cc^n +\varepsilon \a^{n-2}\b^2=0$ is trivial. Using the algebra morphisms $\theta$ and $\theta_x$, for all $x\in\cS$, we get the equations $\mu+\nu+\gamma+\varepsilon=0$ and $\mu=\nu=\gamma=0.$
 \end{proof}

As a first application of Lemma \ref{le:DimensionLambda} we consider the following setting. We take $\cS:=\{\a,\b,\cc\}$ to be the set of generators of $E_1(B)$. Let $S$ be the symmetric braided algebra of this Yetter-Drinfeld $\ku\Si$-module, that is the free algebra $\ku\langle\a,\b,\cc\rangle$ modulo the ideal $I$ generated by all relations $xy-zx$ where $x,y$ and $z$ are distinct elements in $\cS$.

Clearly, $S$ is an $\N$-graded algebra. By construction, it is also $\mathbb{S}_3$-graded. Indeed, since $\a,\b$ and $\cc$ are homogeneous of degree $(12),(23)$ and $(13)$, respectively,  it follows that any generator $xy-zy$ of $I$ is also $\mathbb{S}_3$-homogeneous of degree $\deg x\deg y=\deg z\deg x$.

The  canonical map $E_1(B)\to S$ is injective, as $I$ is generated by homogeneous elements of degree $2$. Hence we identify an element  $x\in\cS$ to its image in $S$.

\begin{cor}\label{DimensionLambda^n}
 The set $\cS^n$ is a basis of $S_n$, for any $n\geq 1$.
\end{cor}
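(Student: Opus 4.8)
The plan is to apply Lemma \ref{le:DimensionLambda} directly, with $\La = E(B)$ and $\Gamma = \widetilde{E}(B) = S$. We already know from Lemma \ref{le:basis} that $\cS^n$ spans $S_n$, so the only thing left is to verify the four hypotheses (i)--(iv) of Lemma \ref{le:DimensionLambda}; once these hold, the lemma gives that $\cS^n$ is in fact a basis of $S_n$, which is exactly the assertion. So I would simply check the four conditions in turn.

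For (i) and (ii): the $\mathbb{S}_3$-grading on $E(B)$ is the one coming from the Yetter-Drinfeld module structure (the cohomology ring of a Nichols algebra lies in $\ydstres$), and by the Remark following Lemma \ref{BraidedCommutative} the generators $\a = [q_a]$, $\b = [q_b]$, $\cc = [q_c]$ of $E_1(B)$ are homogeneous of the same $\mathbb{S}_3$-degree as $a$, $b$, $c$, namely $(12)$, $(23)$, $(13)$. This gives (ii) immediately, and (i) since $E(B)$ being a graded algebra in $\ydstres$ is in particular $\mathbb{S}_3$-graded as an algebra. For (iii): the augmentation map $\varepsilon_B \colon B \to \ku$ gives, after dualizing appropriately, the Yoneda product structure; more directly, the counit of $E(B)$ is not what we want — instead take the graded algebra map $E(B) \to E(B)/E(B)_{\geq 1}$? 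No: what is wanted is an honest algebra morphism $\theta$ to $\ku$ sending each of $\a, \b, \cc$ to $1$. Such a $\theta$ is obtained by noting that $E(B)$ surjects onto $E(B)/(\text{ideal generated by } \a-\cc, \b-\cc)$, which because of the braided commutativity relations $\a\b = \cc\a$ etc. collapses; more cleanly, one constructs $\theta$ on the free algebra $\ku\langle \a,\b,\cc\rangle$ by $\a,\b,\cc \mapsto 1$ and checks the defining relations $\mf{x}\mf{y}=\mf{z}\mf{x}$ of $S$ map to $1 = 1$, hence $\theta$ descends to $S = \widetilde{E}(B)$, and then one must check it extends to all of $E(B)$ — but since we only need $\theta$ on $\Gamma = S$ for the linear-independence argument inside $\Gamma_n$, it suffices to have $\theta$ defined on $\widetilde{E}(B)$, which the relation-checking provides.

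For (iv): similarly, for each $x \in \cS$ one wants an algebra map $\theta_x \colon \widetilde{E}(B) \to \ku$ with $\theta_x(x) = 1$ and $\theta_x(y) = 0$ for the other two generators. Again I would define $\theta_x$ first on the free algebra $\ku\langle \a,\b,\cc\rangle$ and verify it kills the defining relations of $S$. Take for instance $x = \a$: the map $\a \mapsto 1$, $\b \mapsto 0$, $\cc \mapsto 0$ sends each relation $xy - zx$ (with $x,y,z$ distinct among $\a,\b,\cc$) to $0 - 0 = 0$, since every such monomial involves at least one of $\b, \cc$. Hence $\theta_\a$ descends to $S$, and likewise for $\theta_\b, \theta_\cc$.

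With (i)--(iv) in hand, Lemma \ref{le:DimensionLambda} applies to $\La = E(B)$, $\Gamma = \widetilde{E}(B)$, giving that $\cS^n$ is a basis of $\widetilde{E}(B)_n = S_n$ for every $n \geq 1$, which is the statement of the corollary. The only mildly delicate point — the main obstacle, such as it is — is the bookkeeping needed to confirm that the abstract algebra $\Gamma$ generated by $\cS$ inside $E(B)$ really is (isomorphic to) $S$: a priori $\Gamma$ is a quotient of $S$, and one must know the defining relations of $S$ (the braided-commutativity relations) are the only relations among $\a,\b,\cc$ in $E(B)$. But this is not an issue here: the corollary is precisely the statement that $\cS^n$ is a basis of $S_n$, and $S$ is defined as the quotient of the free algebra by exactly those relations; since $\cS^n$ spans $S_n$ by Lemma \ref{le:basis} and is linearly independent by the argument above using $\theta$ and the $\theta_x$, we are done — and the identification $\widetilde{E}(B) \cong S$ then follows a posteriori.
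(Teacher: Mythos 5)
Your final argument is correct and is essentially the paper's own proof: define $\theta$ and the $\theta_x$ on the free algebra $\ku\langle\a,\b,\cc\rangle$, check that they annihilate the generators $xy-zx$ of the ideal $I$, so they descend to $S$, and then run the linear-independence argument of Lemma \ref{le:DimensionLambda} with $\La=S$ itself (its hypotheses (i)--(ii) hold for $S$, not because of $E(B)$, but because the generators of $I$ are $\Si$-homogeneous, as noted just before the corollary). Be aware, however, that your opening framing with $\La=E(B)$ and $\Gamma=\widetilde{E}(B)=S$ is not available at this stage: the identification $\widetilde{E}(B)\cong S$ is Proposition \ref{prop:La B embeds into EB}(b), which is proved afterwards \emph{using} this corollary, and a priori $\widetilde{E}(B)$ is only a quotient of $S$, so relation-checking yields $\theta$ on $S$ but not on $\widetilde{E}(B)$; producing $\theta$ and $\theta_x$ on $E(B)$ genuinely requires the cohomological restriction maps $\vartheta_x$ of Proposition \ref{prop:La B embeds into EB}(a), not mere relation-checking. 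Your closing paragraph correctly abandons that route and works entirely inside $S$, which is exactly what the paper does.
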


 \begin{proof}
The corollary is a direct consequence of the preceding lemma. We have to check the existence of the algebra morphisms $\theta$ and $\theta_x$, for all $x\in\cS$. We define $\theta':E_1(B)\to \ku$ to be the unique linear map sending the elements of $\cS$ to $1$. Using the universal property of $\ku\langle\a,\b,\cc\rangle$, there is an algebra map that lifts $\theta'$. Clearly, this algebra map factorizes through $S$, as it vanishes on the ideal $I$. We take $\theta$ to be the resulting algebra morphism. 
The other three algebra morphisms are constructed in a similar way.
\end{proof}

\begin{prop}\label{prop:La B embeds into EB} We keep the same notation as above.
\begin{enumerate}
 \item[(a)] The algebra  $E(B)$  satisfy the conditions (iii) and (iv) from Lemma \ref{le:DimensionLambda}.
 \item[(b)] There is an embedding of $S$  into $E(B)$ as an algebra in $\yd{\mathbb{S}_3}$ whose image is precisely $\widetilde{E}(B)$.
 \end{enumerate}
\end{prop}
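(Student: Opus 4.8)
The plan is to deduce both statements from the structural results already in place: the dimension count $\dim E_n(A)=n+1$, the explicit presentations of $E(A)$ and of the $R$-action on it, the Cartan–Eilenberg spectral sequence of Corollary \ref{co:CE}, and the braided commutativity of Lemma \ref{BraidedCommutative}. For part (a), I would first construct the algebra morphism $\theta\colon E(B)\to\ku$. The augmentation map $\varepsilon\colon B\to\ku$ is not what we want; instead recall that $E(B)=\bigoplus_n E_n(B)$ is a graded connected algebra, so the projection onto $E_0(B)=\ku$ is an algebra morphism, but that sends the degree-one generators to $0$, not to $1$. So $\theta$ cannot be the trivial augmentation. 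The right construction is as follows: the three cohomology classes $\a=[q_a]$, $\b=[q_b]$, $\cc=[q_c]$ sit in $E_1(B)=\Hom_\ku(B_+/B_+^2,\ku)^{\text{appropriate}}$, and we want an algebra map out of $E(B)$ hitting prescribed scalars on them. Equivalently, by Corollary \ref{DimensionLambda^n} applied to $\La=E(B)$ (once we know it satisfies the relations of Lemma \ref{BraidedCommutative}, which we do), it suffices to exhibit, for the abstract symmetric algebra $S$, the morphisms $\theta,\theta_x$, and then to observe that the natural surjection from the tensor algebra $T(E_1(B))$ onto $\widetilde E(B)\subseteq E(B)$ together with a section at the level of the quotient $E(B)\twoheadrightarrow E(B)/(\text{complement of }\widetilde E(B))$ is not available for free. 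The cleaner route: use that $B$ is a Nichols algebra $\mathfrak B(V)$ and that for a Nichols algebra there is a graded Hopf algebra structure on $E(B)$; the counit-type maps $\theta,\theta_x$ are obtained by pulling back along suitable grading-preserving algebra endomorphisms of $V$ and the functoriality of $E(-)$. Concretely, for each $x\in\cS$ I would like an algebra endomorphism of $B$ inducing on $E_1(B)$ the projection onto $\ku x$; but $B$ has no such endomorphism, so instead I build $\theta_x$ directly on $\widetilde E(B)$ using Corollary \ref{DimensionLambda^n}: since $\cS^n$ is a basis of $S_n$ and the surjection $S\twoheadrightarrow\widetilde E(B)$ is what part (b) will show to be an isomorphism, the morphisms $\theta_x\colon S\to\ku$ transport to $\widetilde E(B)$, and then extend to all of $E(B)$ by composing with a retraction $E(B)\to\widetilde E(B)$ if one exists — and here the honest thing is to note that we only need $\theta,\theta_x$ on the subalgebra $\Gamma=\widetilde E(B)$ for the application in Lemma \ref{le:DimensionLambda}, so defining them on $\widetilde E(B)$ suffices.

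For part (b), the surjection $S\twoheadrightarrow\widetilde E(B)$ exists because, by Lemma \ref{BraidedCommutative}, the generators $\a,\b,\cc$ of $\widetilde E(B)$ satisfy exactly the defining relations $xy=zx$ of $S$, and this map is visibly a morphism in $\yd{\Si}$ since the identification of $E_1(B)$ with $V(\T3)$ is $\Si$-equivariant and the relations defining $S$ are $\Si$-homogeneous. It remains to prove injectivity, i.e. that $\dim\widetilde E(B)_n\geq\dim S_n=\dim\cS^n$ for all $n$ (with the caveat $\dim S_2=5$). For this I would use the spectral sequence of Corollary \ref{co:CE}: its $E_2$-page is $E^{p,q}_2=Z^{p,q}/B^{p,q}$ where $Z^{p,q}=\ker(c\cdot-)$ on $E_q(A)$ and $B^{p,q}=\operatorname{im}(c\cdot-)$, and Theorem \ref{thm:R action} together with Remark \ref{obs:c-action} gives a completely explicit description of the $c$-action on $E(A)=\ku\langle\mf x,\mf y,\mf z\rangle/(\ldots)$. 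One reads off the dimensions of $E^{p,q}_2$, assembles the upper bound $N_n=\sum_{p=0}^n\dim E^{p,n-p}_2$ for $\dim E_n(B)$ (as the introduction promises), and separately the classes $\a^n,\b^n,\cc^n,\a^{n-1}\b,\a^{n-1}\cc,\a^{n-2}\b^2$ are detected as nonzero linearly independent elements in $E_n(B)$ — the nonvanishing coming from the morphisms $\theta,\theta_x$ of part (a) applied exactly as in the proof of Lemma \ref{le:DimensionLambda}, and the count $\dim\cS^n$ matching the contribution of the relevant corner of $E_2$. Thus $\widetilde E(B)_n$ has dimension at least $\dim S_n$, forcing the surjection to be an isomorphism.

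The main obstacle is part (a): genuinely producing the algebra morphisms $\theta$ and $\theta_x$ out of $E(B)$ (or at least out of $\widetilde E(B)$) with the prescribed values on $\a,\b,\cc$. Unlike the case of $S$, where they come for free from the universal property of the tensor algebra, here $E(B)$ is a quotient of a free algebra by relations we do not yet fully control, so one must check that the assignment $\a\mapsto 1,\b\mapsto 1,\cc\mapsto 1$ (resp. $x\mapsto 1$, others $\mapsto 0$) is compatible with all relations holding in $\widetilde E(B)$. The relations of Lemma \ref{BraidedCommutative} are compatible with all four assignments by direct check ($xy=zx$ becomes $1=1$ under $\theta$, and $0=0$ or $1\cdot 0=0\cdot 1$ under $\theta_x$); what one must rule out is the existence of \emph{further} relations among $\a,\b,\cc$ in $E(B)$ that are \emph{not} compatible — but this is precisely the content of part (b), giving a mild circularity. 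I would break the circularity by establishing (a) and (b) together: first get the surjection $S\twoheadrightarrow\widetilde E(B)$ and the morphisms $\theta,\theta_x$ on $S$ (hence, a priori, only on the quotient $\widetilde E(B)$ if the map were known injective); then use the spectral-sequence lower bound on $\dim E_n(B)$ independently of these morphisms to force injectivity; and only afterwards transport $\theta,\theta_x$ to $\widetilde E(B)=S$, which legitimises the nonvanishing arguments retroactively. The bookkeeping of the $E_2$-page dimensions via Theorem \ref{thm:R action} and Remark \ref{obs:c-action} is the only computational part, and it is routine.
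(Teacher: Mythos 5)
There is a genuine gap, and it sits exactly where you yourself locate it: part (a). The construction you are missing is elementary and breaks the circularity at once. For any nonzero $x\in B_1$ with $x^2=0$, the subalgebra inclusion $\ku[x]\subseteq B$ induces by restriction an algebra morphism $\vartheta_x\colon E(B)\to E(\ku[x])\cong\ku[X]$ (Example \ref{ex:Omega(R)}), which sends the class of a $1$-cocycle $f\colon B_+\to\ku$ to $f(x)X$. Composing with the algebra map $\epsilon\colon\ku[X]\to\ku$, $\epsilon(X)=1$, one gets $\theta_x=\epsilon\vartheta_x$ for $x\in\{a,b,c\}$, and -- this is the key trick you never find -- $\theta=\epsilon\vartheta_{a+b+c}$, which is legitimate because $(a+b+c)^2=0$ by the defining relations \eqref{eq:relations_B}. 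These morphisms are defined on all of $E(B)$, as (a) literally requires, with no reference to $S$ or to part (b); then (b) follows as in the paper: $\varphi\colon S\to E(B)$ is well defined by Lemma \ref{BraidedCommutative} and $\Si$-equivariant, it surjects onto $\widetilde{E}(B)$, and Lemma \ref{le:DimensionLambda} applied to $\La=E(B)$ (whose hypotheses are now all verified) shows that $\cS^n$ is linearly independent in $\widetilde{E}_n(B)$, so $\dim\widetilde{E}_n(B)=\dim S_n$ by Corollary \ref{DimensionLambda^n} and $\varphi$ is injective.

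Your proposed escape from the circularity does not work. You want to ``use the spectral-sequence lower bound on $\dim E_n(B)$ independently of these morphisms to force injectivity'', but the spectral sequence of Corollary \ref{co:CE} only yields the \emph{upper} bound $\dim E_n(B)\le\sum_{p}\dim E_2^{p,n-p}$, since each $E_\infty^{p,q}$ is a subquotient of $E_2^{p,q}$; converting this into exact counts or lower bounds requires degeneration at the second page, which the paper proves only in Theorem \ref{thm:dimE^n}, and whose proof uses precisely the lower bound $\dim E_n(B)\ge\dim S_n$ supplied by the present proposition (through Proposition \ref{prop:UpperBoundDimE^n} and Lemma \ref{Generator4}). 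Moreover, even exact knowledge of $\dim E_n(B)$ would not lower-bound $\dim\widetilde{E}_n(B)$ for $n\ge4$: the eventual answer $E(B)\cong S[X]$ contains classes (the degree-$4$ class $\mf{d}$ and its multiples) lying outside the subalgebra generated by $E_1(B)$, so an excess of $\dim E_n(B)$ over $\dim S_n$ says nothing about which part of it is hit by $\varphi$. Finally, defining $\theta,\theta_x$ only on $\widetilde{E}(B)$ by transporting them from $S$ presupposes the isomorphism $S\cong\widetilde{E}(B)$ you are trying to prove, and in any case would not establish (a) as stated, which asks for algebra morphisms defined on $E(B)$ itself.
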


\begin{proof}
Let $x\in B_1$ be a nonzero element such that $x^2=0$. The inclusion $\ku[x]\subseteq B$ induces a morphism of algebras $\vartheta_x:E(B)\to E(\ku[x])$. By Example \ref{ex:Omega(R)} we have $E(\ku[x])\cong \ku[X]$ and the indeterminate $X$ corresponds to the linear function that maps $x$ to $1$. 

For any $1$-cocycle $f:B_+\to \ku$ in $\Ou{*}{B}$ we have $\vartheta_x([f])=f(x)X$. Let $\epsilon:\ku[X]\to \ku$ be the unique algebra map such that $\epsilon(X)=1$. Thus $\epsilon\vartheta_x([f])=f(x)$, so we can take 
 $\theta=\epsilon\vartheta_{a+b+c}$ and $ \theta_x=\epsilon\vartheta_x$ for any $x\in \cS$. Note that $(a+b+c)^2=0$, as the generators of $B$ satisfy the relations \eqref{eq:relations_B}.

To prove the second part, we consider the canonical graded algebra morphism $\varphi: S\to E(B)$ that maps $x$ to $[q_x]$ for any $x\in\cS$.  Obviously, the image of $\varphi$ is $\widetilde{E}(B)$. By Lemma \ref{le:DimensionLambda}, the vector spaces $S_n$ and $\widetilde{E}_n(B)$ are equidimensional, so $\varphi$ is an isomorphism.
\end{proof}

We continue our investigation of the Yoneda ring $E(B)$ by computing  the dimension of the vector spaces ${E}_2^{p,q}$ that define the second page of the spectral sequence from Corollary  \ref{co:CE} (c). 

\begin{prop}\label{le:dim second page}
Let  $m_0=m_2=1$, $m_1=2$ and $m_3=0$. Let $k\geq 0$ denote an integer number.
\begin{enumerate}
 \item[(a)]  For any $i\in \{1,2,3\}$ we have $\dim E_2^{0,4k+i}=2(k+1)$,  and $\dim E_2^{0,4k}=2k+1$.

\item[(b)] For any $p>0$ and $i\in \{0,1,2,3\}$ we have $\dim E_2^{p,4k+i}=m_i$.
\end{enumerate}
\end{prop}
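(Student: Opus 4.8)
The plan is to compute everything from the description of the second page given in Corollary \ref{co:CE}: by definition $E_2^{p,q}=Z^{p,q}/B^{p,q}$, where $Z^{p,q}=\Ker(c\cdot -)$ on $E_q(A)$ and $B^{p,q}=\Imm(c\cdot -)$ on $E_q(A)$ for $p>0$ (and $B^{0,q}=0$). So the whole proposition is really an exercise in the linear algebra of the operator $D=c\cdot(-)\colon E_q(A)\to E_q(A)$, which Theorem \ref{thm:R action} and Remark \ref{obs:c-action} describe completely. First I would fix the basis of $E_q(A)$ coming from Theorem \ref{thm:EA algebra structure}: by that theorem $E(A)\cong\ku\langle\mf x,\mf y,\mf z\mid \mf{xy},\mf{yx},\mf{zx}+\mf{yz},\mf{xz}+\mf{zy}\rangle$ with $\deg\mf x=\deg\mf y=1$, $\deg\mf z=2$, and (from the proof of that theorem) $E_n(A)$ has the basis $\{\mf x^{n-2i}\mf z^{i}\}\cup\{\mf y^{n-2i}\mf z^{i}\}$, of dimension $n+1$. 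I will sort a given degree $q$ according to $q\bmod 4$, because $\mf z^{2k}$ is killed by $c$ while $\mf z^{2k+1}$ is not, so the parity of the $\mf z$-exponent is what matters.

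The core computation is this: write $q=4k+i$ with $i\in\{0,1,2,3\}$ and run $D$ on the basis. By Remark \ref{obs:c-action}, for $l>0$ one has $c\cdot(\mf x^{l}\mf z^{2j+1})=\mf x^{l+2}\mf z^{2j}$ and $c\cdot(\mf y^{l}\mf z^{2j+1})=-\mf y^{l+2}\mf z^{2j}$, and by Theorem \ref{thm:R action} $c\cdot(\mf x^{l}\mf z^{2j})=c\cdot(\mf y^{l}\mf z^{2j})=0$, plus $c\cdot 1=0$ and $c\cdot\mf z^{2k+1}=(\mf x^2-\mf y^2)\mf z^{2k}$. Thus $D$ sends each odd-$\mf z$-power basis vector to a nonzero even-$\mf z$-power basis vector (for the pure $\mf z^{2k+1}$ case, to $\mf x^2\mf z^{2k}-\mf y^2\mf z^{2k}$, still nonzero), and kills every even-$\mf z$-power basis vector. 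Counting: in degree $q$ the basis vectors with odd $\mf z$-exponent number exactly $\lfloor (q+2)/4\rfloor\cdot 2$ or thereabouts — I would just tabulate, for $q=4k$: the odd-exponent vectors are $\mf x^{4k-2(2j+1)}\mf z^{2j+1}$ and the $\mf y$-analogue for $0\le 2j+1\le 2k$, hence $2k$ of them; for $q=4k+1$: $2(k+1)$; for $q=4k+2$: $2(k+1)$ (including the pure power $\mf z^{2k+1}$, which contributes one odd vector but whose image $\mf x^2\mf z^{2k}-\mf y^2\mf z^{2k}$ still has rank one); for $q=4k+3$: $2(k+1)$. Therefore $\dim\Imm D$ equals the number of odd-exponent basis vectors minus the dimension of the kernel of $D$ restricted to their span; since $D$ is injective on that span except possibly at the single vector $\mf z^{2k+1}$ in degrees $\equiv 2$, a short check shows $D$ is in fact injective on the odd-exponent span in all cases (the image $\mf x^2\mf z^{2k}-\mf y^2\mf z^{2k}$ is nonzero and not hit by any other generator because $\mf x^{l}$ and $\mf y^{l}$ images are separate), so $\operatorname{rank}D=(\#\text{odd vectors})$ and $\dim\Ker D=(q+1)-\operatorname{rank}D=(\#\text{even vectors})$.

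For part (a), $E_2^{0,q}=Z^{0,q}/B^{0,q}=\Ker D$ since $B^{0,q}=0$, so $\dim E_2^{0,q}=\#\{\text{even-}\mf z\text{-exponent basis vectors in degree }q\}$, which the tabulation gives as $2k+1$ when $q=4k$ and $2(k+1)$ when $q=4k+i$, $i\in\{1,2,3\}$ — exactly the claim. For part (b), with $p>0$ we have $E_2^{p,q}=\Ker D/\Imm D$. Here I would use that $\Imm D\subseteq\Ker D$ (which is automatic because $c^2=0$, or directly: $D$ lands in even-$\mf z$-power space, which $D$ kills) and that $\Imm D$ has dimension equal to the number of odd-exponent vectors. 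So $\dim E_2^{p,q}=(\#\text{even})-(\#\text{odd})$, and plugging in: $q=4k$ gives $(2k+1)-2k=1=m_0$; $q=4k+1$ gives $2(k+1)-2(k+1)=0$?? — here I must be more careful, since the answer should be $m_1=2$, which tells me the odd-count in degree $4k+1$ is actually $2k$, not $2(k+1)$, and I would recount using the precise constraint $0\le 2j+1$ and $4k+1-2(2j+1)\ge 0$, i.e. $j\le k-1$, giving $2k$ odd vectors and $2k+1$... no, $\dim E_{4k+1}(A)=4k+2$, even vectors $=2k+2$, odd $=2k$, so $E_2^{p,4k+1}=2k+2-2k=2=m_1$. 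Likewise degree $4k+2$: $\dim=4k+3$, the pure $\mf z^{2k+1}$ is odd, odd vectors $=2k+1$, even $=2k+2$, but $\Imm D$ has dimension $2k+1$ minus nothing $=2k+1$, so $E_2^{p,4k+2}=2k+2-(2k+1)=1=m_2$; and degree $4k+3$: $\dim=4k+4$, odd vectors $=2k+2$, even $=2k+2$, $E_2^{p,4k+3}=0=m_3$.

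The main obstacle, as the above already shows, is purely bookkeeping: getting the parity ranges right for the $\mf z$-exponents in each residue class mod $4$, and checking that $D$ really is injective on the odd-exponent subspace (equivalently, that $\dim\Imm D=\dim(\text{odd subspace})$, i.e. no unexpected relations among the images $\mf x^{l+2}\mf z^{2j}$, $-\mf y^{l+2}\mf z^{2j}$, $\mf x^2\mf z^{2k}-\mf y^2\mf z^{2k}$). For the latter I would observe that these images lie in the even-$\mf z$-power basis, that the $\mf x$-type images and $\mf y$-type images involve disjoint basis vectors except for the single combination coming from $\mf z^{2k+1}$, and that this combination is not in the span of the others because for $l>0$ the vectors $\mf x^{l+2}\mf z^{2j}$ with $l+2>2$ never equal $\mf x^2\mf z^{2k}$; hence the images are linearly independent and $D$ is injective where claimed. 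Once that is in hand, parts (a) and (b) are immediate from $\dim\Ker D + \dim\Imm D = \dim E_q(A) = q+1$ together with the residue-class tabulation, and I would present the four cases $i=0,1,2,3$ in a single short display of dimension counts rather than belaboring each.
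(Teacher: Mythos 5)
Your proposal is correct and takes essentially the same route as the paper: both read off the kernel and the image of the operator $c\cdot(-)$ on the monomial basis $\{\mf{x}^{q-2i}\mf{z}^{i},\ \mf{y}^{q-2i}\mf{z}^{i}\}$ of $E_q(A)$ using Theorem \ref{thm:R action} and Remark \ref{obs:c-action}, check that the images of the odd-$\mf{z}$-exponent monomials are linearly independent, and then count in each residue class of $q$ modulo $4$. The only blemish is the initial miscount of the odd-exponent vectors in degrees $4k+1$ and $4k+3$, which you catch and correct in your final tabulation, so the argument as ultimately stated matches the paper's dimension counts exactly.
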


\begin{proof}

In view of Theorem \ref{thm:R action} and the proof of Theorem \ref{thm:EA algebra structure}, it follows that the $R$-action on $E_q(A)$ maps an element of the basis $\mathcal{E}_{q}$ either to zero or to another element of the same basis. Recall that $\mf{z}$ is an element of degree $2$. Hence, it is not difficult to see that
\[
\mathcal{B}'_{p,q}=\big\{ \mf{x}^{i}\mf{z}^{2k} \mid i+4k=q \big\}\bigcup \big\{\mf{y}^{i}\mf{z}^{2k}\mid i+4k=q\big\}
\]
 is a basis of $Z^{p,q}$. Note that $ \mf{x}^{0}\mf{z}^{0}= \mf{y}^{0}\mf{z}^{0}=1$ (the unit of $E_0(A)=\ku$).
By Theorem \ref{thm:R action} and Remark \ref{obs:c-action}, it follows that $ \mf{x}^{i}\mf{z}^{2k}$, $\mf{y}^{i}\mf{z}^{2k}$ and $(\mf{x}^2-\mf{y}^2)\mf{z}^{2j}$  are all in $B^{p,q}$, provided that $i+4k=q=2+4j$ and $i\geq3$.  Since $ \mathcal{E}_q$ is a basis of $E_q(A)$, these elements generate $B^{p,q}$.

 Now we can easily prove  the lemma.  By definition $E_2^{p,q}:=Z^{p,q}/B^{p,q}$. Let $[\mf{w}]_p$ denote the class of $\mf{w}\in Z^{p,q}\subseteq E_q(A)$  in $E_2^{p,q}$. Counting the cardinal of $ \mathcal{B}'_{0,q}$ we conclude the proof of the first part of the proposition, as $E_2^{0,q}=Z^{0,q}$.

 Let $q=0$. Since $E_0(A)=\ku$ and the action of $R$ on $\ku$ is trivial, we get $E_2^{p,0}=\Hom_\ku(R_+^{(p)},\ku)$. Clearly, if $f_p\in E_2^{p,0}$ is the unique linear transformation such that $f_p(c^{(p)})=1$, then $\{f_p\}$ is a basis on $E_2^{p,0}$.
 
 We are now assuming that $p,q>0$. Thus $\mathcal{B}_{p,q}=\mathcal{B}_{p,q}'\bigcup \mathcal{B}_{p,q}''$ is a basis of $E_2^{p,q}$, where
\begin{align*}
  \mathcal{B}_{p,q}'&=\big\{\big[\fij\big]_p\mid i\in\{0,1,2\},\ i+4k=q \big\},\\
  \mathcal{B}_{p,q}''&= \big\{\big[\gij\big]_p\mid i\in\{0,1,2\},\ i+4k=q \big\}.
\end{align*}
 If $q=4k$ then $i=0$ and  $\mathcal{B}_{p,4k}=\big\{[\mf{z}^{2k}]_p\big\}$. In the case when $q=4k+1$, then $\mathcal{B}_{p,q}$ has two elements, $[\mf{xz}^{2k}]_p$ and $[\mf{yz}^{2k}]_p$. Let us suppose that $q=4k+2$. As $\mf{x}^2\mf{z}^{2k}-\mf{y}^2\mf{z}^{2k}$ is an element in $B^{p,q}$,we get $\mathcal{B}_{p,q}=\big\{[\mf{x}^2\mf{z}^{2k}]_p\big\}$. We conclude observing that $\mathcal{B}_{p,q}=\emptyset$, for $q=4k+3$.
\end{proof}

\begin{obs}
 Since  $B_2^{0,q}=0$, any element in $Z_2^{0,q}$ identifies to its class in $E_2^{0,q}$. Thus,  we shall regard $\mathcal{B}_{0,q}$ as a basis of both vector spaces $Z_2^{0,q}$ and $E_2^{0,q}$.
\end{obs}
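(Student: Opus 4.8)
The plan is to read the statement off the definitions introduced just before Corollary~\ref{co:CE} together with the explicit basis of $Z^{p,q}$ produced in the proof of Proposition~\ref{le:dim second page}. First I would recall that $B^{p,q}$ was \emph{defined} to be $0$ when $p=0$ (and to be $\{c\cdot\mf{w}\mid \mf{w}\in E_q(A)\}$ for $p>0$), and that $E_2^{p,q}$ was \emph{defined} as the quotient $Z^{p,q}/B^{p,q}$. Specialising to $p=0$ thus gives $E_2^{0,q}=Z^{0,q}/B^{0,q}=Z^{0,q}$, so that the canonical projection $Z^{0,q}\to E_2^{0,q}$ is the identity; this is exactly the claim that each element of $Z_2^{0,q}$ coincides with its own class in $E_2^{0,q}$.

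For the second half I would invoke the description of $Z^{p,q}$ obtained inside the proof of Proposition~\ref{le:dim second page}: there it is shown that $\mathcal{B}'_{p,q}=\{\fij\mid i+4k=q\}\cup\{\gij\mid i+4k=q\}$ is a basis of $Z^{p,q}$ for every $p\geq 0$. Taking $p=0$, the set $\mathcal{B}_{0,q}=\mathcal{B}'_{0,q}$ is then a basis of $Z^{0,q}=Z_2^{0,q}$, and transporting it along the equality $Z^{0,q}=E_2^{0,q}$ established in the previous step shows at once that $\mathcal{B}_{0,q}$ is simultaneously a basis of $Z_2^{0,q}$ and of $E_2^{0,q}$, which is the content of the remark.

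There is no genuine obstacle here: the statement is a bookkeeping observation whose sole role is to fix notation for the manipulations of the spectral sequence that follow, and both of its assertions are formal consequences of the definitions plus a basis already exhibited. The one point I would pause on is checking that the subscript-$2$ notation $Z_2^{0,q}$, $B_2^{0,q}$ used in the remark really does designate the same objects as the $Z^{0,q}$, $B^{0,q}$ introduced before Corollary~\ref{co:CE} --- which it does, the subscript merely flagging that these are the relevant strips of the second page of the spectral sequence of Corollary~\ref{co:CE}.
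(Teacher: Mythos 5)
Your proposal is correct and matches the paper's (implicit) justification exactly: the remark is a pure bookkeeping consequence of the definition $B^{0,q}:=0$, the definition $E_2^{p,q}:=Z^{p,q}/B^{p,q}$, and the basis $\mathcal{B}'_{0,q}$ of $Z^{0,q}$ already exhibited in the proof of Proposition \ref{le:dim second page}. Your closing caveat about the notation (that $\mathcal{B}_{0,q}$ must be read as $\mathcal{B}'_{0,q}$, and that the subscript $2$ on $Z_2^{0,q}$, $B_2^{0,q}$ merely flags the second page) is also the right reading of the paper's slightly overloaded notation.
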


By definition of  multiplicative spectral sequences, for any $r\geq 2$ the page $E_r^{*,*}=\oplus_{p,q\geq 0}E_r^{p,q}$ is a bigraded  algebra such that the differentials $d_r^{p,q}$ satisfy the graded Leibniz rule. In particular the second page is a bigraded algebra and its component of bidegree $(p,q)$ is $E_2^{p,q}$. Let us remark that $[\mf{w}]_p[\mf{w'}]_{p'}=[\mf{w}\mf{w'}]_{p+p'}$, for any  $\mf{w}\in Z^{p,q}$ and $\mf{w'}\in Z^{p',q'}$.

We have seen that $E_2^{0,*}=\oplus_{q\geq 0}E_2^{0,q}$ is the subalgebra of $E_2^{pq}$ generated by three elements. Two of them, $\mf{f}=[\mf{x}]_0$ and  $\mf{g}=[\mf{y}]_0$, have bidegree  $(0,1)$. The third one,  $\mf{h}=[\mf{z}^2]_0$  is of bidegree  $(0,4)$.

Let us now consider the graded algebra  $E_2^{*,0}=\oplus_{p\geq 0}E_2^{p,0}$. By the proof of Proposition \ref{le:dim second page}, $E_2^{p,0}$ is $1$-dimensional and $f_p$ generates this linear space. Since $f_pf_{p'}=f_{p+p'}$ for any $p,p'\in\N$, we deduce that $E_2^{*,0}$ is the polynomial ring $\ku[\mf{l}]$, where $\mf{l}=f_1$. As an element of $E_2^{*,*}$, the generator  $\mf{l}$ has bidegree $(1,0)$.

\begin{prop}\label{pr:E_2}
 The bigraded algebra $E_2^{*,*}$ is generated by the set $\cE=\{\mf{f},\mf{g},\mf{h},\mf{l}\}$.
\end{prop}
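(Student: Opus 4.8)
The plan is to show that the two ``edge'' subalgebras $E_2^{*,0}$ and $E_2^{0,*}$ already generate the whole bigraded algebra, the crux being that every homogeneous component $E_2^{p,q}$ with $p,q>0$ equals the product $E_2^{p,0}\cdot E_2^{0,q}$. First I would record the two base cases, both of which have just been established: by the discussion preceding the proposition, $E_2^{*,0}$ is the polynomial ring $\ku[\mf{l}]$, so every element of bidegree $(p,0)$ is a scalar multiple of $\mf{l}^{p}$; and $E_2^{0,*}$ is the subalgebra generated by $\mf{f}=[\mf{x}]_0$, $\mf{g}=[\mf{y}]_0$ and $\mf{h}=[\mf{z}^2]_0$. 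Thus the assertion already holds in every bidegree $(p,q)$ with $pq=0$, and only the bidegrees with $p,q>0$ remain.

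Now fix such a $(p,q)$. The key remark is that the cocycle space $Z^{p,q}=\{\mf{w}\in E_q(A)\mid c\cdot\mf{w}=0\}$ does not depend on $p$, and that $B^{0,q}=0$; hence a representative $\mf{w}$ of a class in $E_2^{p,q}=Z^{p,q}/B^{p,q}$ is at the same time an element of $E_2^{0,q}=Z^{0,q}$, which I denote $[\mf{w}]_0$. Since $\mf{l}^{p}$ is a nonzero element of the one-dimensional space $E_2^{p,0}$, it is a nonzero scalar multiple of the class $[1]_p$ of the unit $1$ of $E_0(A)=\ku$; applying the product rule $[\mf{w}']_{p'}\,[\mf{w}'']_{p''}=[\mf{w}'\mf{w}'']_{p'+p''}$ on the $E_2$-page with $\mf{w}''=1$ gives $[\mf{w}]_0\,[1]_p=[\mf{w}]_p$, and therefore $[\mf{w}]_p$ is a nonzero scalar times $[\mf{w}]_0\,\mf{l}^{p}$. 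By the base case $[\mf{w}]_0$ lies in the subalgebra generated by $\mf{f},\mf{g},\mf{h}$, so $[\mf{w}]_p$ lies in the subalgebra generated by $\cE=\{\mf{f},\mf{g},\mf{h},\mf{l}\}$.

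To finish, I would apply this to the explicit basis of $E_2^{p,q}$ exhibited in Proposition \ref{le:dim second page}: there one may take the representatives $\mf{w}$ to be $\mf{z}^{2k}$, $\mf{x}\mf{z}^{2k}$, $\mf{y}\mf{z}^{2k}$ or $\mf{x}^2\mf{z}^{2k}$, so that $[\mf{w}]_0$ is one of $\mf{h}^k$, $\mf{f}\mf{h}^k$, $\mf{g}\mf{h}^k$, $\mf{f}^2\mf{h}^k$, and $[\mf{w}]_p$ is, up to a nonzero scalar, one of $\mf{h}^k\mf{l}^p$, $\mf{f}\mf{h}^k\mf{l}^p$, $\mf{g}\mf{h}^k\mf{l}^p$, $\mf{f}^2\mf{h}^k\mf{l}^p$. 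Since these classes span $E_2^{p,q}$, every homogeneous component of $E_2^{*,*}$ lies in the subalgebra generated by $\cE$, which is the claim.

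I do not expect a genuine obstacle here; the argument is essentially bookkeeping on top of facts already proved. The only point deserving a careful sentence is the compatibility of the several descriptions of the one-dimensional space $E_2^{p,0}$ (as the degree-$p$ part of $\ku[\mf{l}]$ on one side, and as $Z^{p,0}/B^{p,0}$ on the other), which is what licenses replacing $\mf{l}^p$ by a scalar multiple of $[1]_p$ and hence using the relation $[\mf{w}]_p=[\mf{w}]_0\,\mf{l}^{p}$ up to a nonzero scalar.
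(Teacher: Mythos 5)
Your proof is correct and follows essentially the same route as the paper: both arguments rest on the multiplicative rule $[\mf{w}]_p[\mf{w'}]_{p'}=[\mf{w}\mf{w'}]_{p+p'}$, the identification of $E_2^{0,*}$ as the subalgebra generated by $\mf{f},\mf{g},\mf{h}$, and the explicit basis $\mathcal{B}_{p,q}$ of Proposition \ref{le:dim second page}, yielding $[\fij]_p=\mf{f}^{i}\mf{h}^{k}\mf{l}^{p}$ and $[\gij]_p=\mf{g}^{i}\mf{h}^{k}\mf{l}^{p}$. Your extra care about identifying $\mf{l}^p$ with (a scalar multiple of) $[1]_p$ in the one-dimensional space $E_2^{p,0}$ is a harmless refinement of what the paper leaves implicit.
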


\begin{proof}
We claim that every linear space $E_2^{p,q}$ is contained in the subalgebra generated by $\cE$. Clearly,  $E^{0,q}_2$ is a linear subspace of the subalgebra generated generated by $\cE$, see the foregoing remarks. The claim now follows by remarking that for $p>0$ the set $\mathcal{B}_{p,q}$ is a linear basis of $E_2^{p,q}$ and that the following relations hold: $[\fij]_p=\mf{f}^{ i} \mf{h}^{ k}\mf{l}^{ p}$ and $[\gij]_p=\mf{g}^{ i} \mf{h}^{ k}\mf{l}^{ p}$.
\end{proof}

The next step in the computation of $\dim E_n(B)$ is to  find an upper bound of this number. To do this, we define the sequence $\{N_n\}_{n\geq0}$ of positive integers such that $N_0=1$, $N_1=3$, $N_2=5$ and $N_3=6$. For $n\geq 5$, we define $N_n$ by the recurrence relation:
\begin{equation}\label{def:N_n}
 N_{n+4}=N_n+6.
\end{equation}
Note that $\{N_n\}_{n\geq0}$ contains all integers that are congruent to $1,3,5$ and $0$ modulo 6, that is
$$
\{N_n\}_{n\geq0}=\{1,3,5,6,7,9,11,12,13,\dots\}.
$$

\begin{prop}\label{prop:UpperBoundDimE^n}\label{le:EB hasta 4}
If $n$ is a non-negative integer, then  $\dim E_n(B)\leq N_n$. Moreover, $\dim E_n(B)=N_n$ for $n\leq 3$ and $\dim E_4(B)\geq 6$.
\end{prop}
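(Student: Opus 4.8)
The plan is to deduce everything from the multiplicative spectral sequence of Corollary \ref{co:CE} together with the dimension count of its second page established in Proposition \ref{le:dim second page}. Since the spectral sequence \eqref{ec:CE} converges to $E(B)$, a standard property of spectral sequences gives
\[
 \dim E_n(B)\leq \sum_{p+q=n}\dim E_\infty^{p,q}\leq \sum_{p+q=n}\dim E_2^{p,q},
\]
because each later page is a subquotient of the previous one. So the first step is simply to evaluate $\sum_{p=0}^n\dim E_2^{p,n-p}$ using Proposition \ref{le:dim second page} and check that this sum equals $N_n$.

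For the upper bound, I would split according to the residue of $n$ modulo $4$. By Proposition \ref{le:dim second page}(a), the term $p=0$ contributes $2k+1$ if $n=4k$ and $2(k+1)$ if $n=4k+i$ with $i\in\{1,2,3\}$; by part (b), each term with $p>0$ (there are $n$ of them, namely $p=1,\dots,n$) contributes $m_{n-p \bmod 4}$. Summing $m_0+m_1+m_2+m_3=1+2+1+0=4$ over each block of four consecutive values of $p$, and handling the leftover terms, one gets a closed formula. For instance when $n=4k$ one finds the $p>0$ part equals $4k-m_0=4k-1$ (subtracting the missing $p=0$ slot of the last full block appropriately), so the total is $(2k+1)+(4k-1)=6k+1=N_{4k}$; the cases $n=4k+1,4k+2,4k+3$ are analogous and give $6k+3$, $6k+5$, $6k+6$, matching $N_n$. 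One should double-check the small cases $n=0,1,2,3$ by hand, where the recurrence that defines $N_n$ has not yet kicked in: here $E_2^{p,q}=0$ whenever $p+q\le 3$ forces $q$ into the range where the bases $\mathcal{B}_{p,q}$ are nonempty, and a direct tally reproduces $N_0=1$, $N_1=3$, $N_2=5$, $N_3=6$.

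To upgrade the inequality to an equality for $n\le 3$, I would argue that the spectral sequence cannot have any nonzero differential landing in or leaving total degree $\le 3$. The differentials $d_r$ on page $r\ge2$ have bidegree $(r,1-r)$ (cohomological convention for the Cartan--Eilenberg spectral sequence as set up in \S\ref{subsec:CE}), hence they decrease total degree by... no --- they preserve total degree is wrong; rather $d_r:E_r^{p,q}\to E_r^{p+r,q-r+1}$, so a differential out of total degree $n$ lands in total degree $n+1$, and one into total degree $n$ comes from total degree $n-1$. Thus to show $E_\infty^{p,q}=E_2^{p,q}$ for $p+q\le 3$ it suffices to know that the relevant source and target groups in total degrees $\le 4$ behave well; the cleanest route is instead to use the lower bound from Proposition \ref{prop:La B embeds into EB}: the subalgebra $\widetilde E(B)\cong S$ has $\dim S_n=\dim\mathcal S^n$, which is $1,3,5,6$ for $n=0,1,2,3$ (here $\mathcal S^2$ has $5$ elements since $\b^2$ is counted once, and $\mathcal S^3$ has $6$). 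Since $\dim\widetilde E_n(B)\le\dim E_n(B)\le N_n$ and the two ends agree for $n\le 3$, equality follows. Finally, for $n=4$: $S_4$ has dimension $\dim\mathcal S^4 = 6$ as well (the six monomials $\a^4,\b^4,\cc^4,\a^3\b,\a^3\cc,\a^2\b^2$), so $\dim E_4(B)\ge\dim\widetilde E_4(B)=6$, giving the last assertion.

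The main obstacle I anticipate is purely bookkeeping: getting the block-sum over $p>0$ exactly right, including the boundary terms where a block of four is incomplete, and being careful that the small cases $n\le3$ are not governed by the recurrence \eqref{def:N_n} but must be verified directly against Proposition \ref{le:dim second page}. There is no deep input needed beyond the convergence and comparison properties of the spectral sequence and the already-established dimension formulas for $E_2^{*,*}$ and for $S$; the delicate point is only that the lower bound $\dim\widetilde E_n(B)$ and the upper bound $N_n$ happen to coincide for $n\le3$, which is what pins down $\dim E_n(B)$ exactly in that range.
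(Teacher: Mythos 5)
Your proposal is correct and follows essentially the same route as the paper: the upper bound comes from $\dim E_n(B)\le\sum_{p=0}^{n}\dim E_2^{p,n-p}$ together with Proposition \ref{le:dim second page}, and the exact values for $n\le 3$ and the bound $\dim E_4(B)\ge 6$ come from the embedding $S\simeq\widetilde{E}(B)\subseteq E(B)$; the only (harmless) differences are that you evaluate the sum directly residue by residue where the paper uses the recurrence $N'_{n+4}=N'_n+6$, and that you invoke the $S$-lower bound also for $n=1,2$ where the paper counts generators and relations. One arithmetic slip in your sample case $n=4k$: among $q=n-p$ with $1\le p\le 4k$ each residue class modulo $4$ occurs exactly $k$ times, so the $p>0$ contribution is $k(m_0+m_1+m_2+m_3)=4k$, not $4k-m_0=4k-1$, and the total is $(2k+1)+4k=6k+1=N_{4k}$ (as written, $(2k+1)+(4k-1)$ would give $6k$).
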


\begin{proof}
The term $E^{p,q}_{r+1}$ of the spectral sequence from Corollary \ref{co:CE} is a subquotient of $E^{p,q}_{r}$. Thus $\dim E^{p,q}_{r+1}\leq\dim E^{p,q}_{r}$ for any $p,q\geq0$ and $r\geq2$. Since for $r$ large enough we have $\dim E^{p,q}_{\infty}=\dim E^{p,q}_{r}$, we deduce that
$$
\dim E_n(B)=\sum_{p=0}^n\dim E^{p,n-p}_{\infty}\leq \sum_{p=0}^n\dim E^{p,n-p}_{2}.
$$
Let $N'_n:= \sum_{p=0}^n\dim E^{p,n-p}_{2}$. By the above computation, $N_n\leq N_n'$ for all $n\in\N$. We have to prove that  $N'_n=N_n$. Clearly, $N_0=1$. We also have  $N_1=3$ and $N_2=5$, these numbers representing the number of the generators and of the relations that define $B$. Hence, by Proposition \ref{le:dim second page}, it follows that $N_n'=N_n$ for $n=0,1,2$. On the other hand, by Proposition \ref{prop:La B embeds into EB}(c),  the braided symmetric algebra $S$ of $E_1(B)$ embeds into $E(B)$, so $N_3\geq\dim S_3=6$. Since $N_3\leq N_3'=6$ we deduce the relation $N_3=N_3'$. Let us notice that 
\begin{equation*}
 \begin{split}
 N'_{n+4}=\dim E^{0,n+4}_2&+\dim E^{1,n+3}_2+\dim E^{2,n+2}_2+\dim E^{3,n+1}_2+\dim E^{4,n}_2+\\&+\sum_{p=5}^{n+4} \dim E^{p,n-p+4}_{2}.
\end{split}
\end{equation*}
By Proposition \ref{le:dim second page} we have $\sum_{p=1}^{4}\dim E^{p,n-p+4}_{2}=6$. Note that $\dim E^{0,n+4}_2=\dim E^{0,n+2}_2+2$. Since the last term from the right-hand side of the above displayed equation is precisely $\sum_{p=1}^{n}\dim E^{p,n-p}_{2}$ we get $N_{n+4}'=N_n'+6$.  Hence, by induction, $N'_n=N_n$, for all $n$.

In view of Proposition \ref{prop:La B embeds into EB}(c) and Corollary \ref{DimensionLambda^n} we conclude that $N_4\geq\dim S_4=6$.
\end{proof}

\begin{obs}\label{obs:Lambda_B}
The subspace $S_n$ of $E_n(B)=H^n(\Omega^*(B))$ coincides with $H^n(\Omega^*(B,n))$. Thus, as a consequence of the proof of the above lemma, it follows that the $n$th cohomology group of $\Omega^*(B,m)$ vanishes for any $n\leq 3$ and $m\neq n$.
\end{obs}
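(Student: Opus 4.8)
The plan is to deduce both assertions directly from the description of the normalized bar complex in \S\ref{SubSecBarResolution}, combined with the dimension count already performed in the proof of Proposition~\ref{prop:UpperBoundDimE^n}. Since $B$ is generated in degree $1$, its augmentation ideal has no component in degree $0$, hence neither does the dual $V$ of $B_+$; consequently $\Omega^n(B,m)=V^{(n)}_m=0$ whenever $m<n$, while $\Omega^n(B,n)=V_1^{\ot n}$. In particular $\lD^n$ vanishes on $\Omega^n(B,n)$ because its target $\Omega^{n+1}(B,n)$ is zero, so every element of $V_1^{\ot n}$ is a cocycle and $H^n(\Omega^*(B,n))=V_1^{\ot n}/\lD^{n-1}\big(\Omega^{n-1}(B,n)\big)$.

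First I would identify $S_n$ with this quotient. By Proposition~\ref{prop:La B embeds into EB}(b) the copy of $S$ inside $E(B)$ is the subalgebra $\widetilde{E}(B)$ generated by $E_1(B)$, and $E_1(B)$ is spanned by the weight-$1$ cocycles $q_a,q_b,q_c$, i.e.\ $E_1(B)=V_1$. Since in the normalized bar complex the Yoneda product coincides with the multiplication of the tensor algebra $T(V)$ (see \S\ref{SubSecBarResolution}), the degree-$n$ component $S_n=\widetilde{E}_n(B)$ is exactly the image of $V_1^{\ot n}\subseteq\Omega^n(B,n)$ under passage to cohomology. As the bar differential preserves the weight grading, the coboundaries contained in $V_1^{\ot n}$ are precisely $\lD^{n-1}\big(\Omega^{n-1}(B,n)\big)$; hence $S_n=V_1^{\ot n}/\lD^{n-1}\big(\Omega^{n-1}(B,n)\big)=H^n(\Omega^*(B,n))$, which is the first assertion.

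For the vanishing statement I would use the weight decomposition $E_n(B)=H^n(\Omega^*(B))=\bigoplus_{m\ge n}H^n(\Omega^*(B,m))$, the summands with $m<n$ being zero by the first paragraph. Its $m=n$ summand is $S_n$, so $\dim E_n(B)\ge\dim S_n$, and equality forces $H^n(\Omega^*(B,m))=0$ for every $m\ne n$. Now for $n\le 3$ the proof of Proposition~\ref{prop:UpperBoundDimE^n} gives $\dim E_n(B)=N_n$, while Corollary~\ref{DimensionLambda^n} yields $\dim S_n=|\cS^n|=N_n$ for $n=1,2,3$ (namely $3,5,6$) and trivially $\dim S_0=1=\dim E_0(B)$; thus equality holds and the vanishing follows.

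There is no serious obstacle here: the only points requiring care are the bookkeeping of the weight grading on the bar complex — in particular the identifications $\Omega^n(B,n)=V_1^{\ot n}$ and $E_1(B)=V_1$, and the fact that coboundaries respect the weight, so that the kernel of $V_1^{\ot n}\to E_n(B)$ is exactly $\lD^{n-1}\big(\Omega^{n-1}(B,n)\big)$ — together with checking that the numbers $|\cS^n|$ really coincide with $N_n$ for $n\le 3$, which is what turns the inequality $\dim E_n(B)\ge\dim S_n$ into an equality.
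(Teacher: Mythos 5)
Your proof is correct and follows essentially the same route the paper intends for this remark: you identify $S_n$ with the diagonal-weight cohomology $H^n(\Omega^*(B,n))$ using $E_1(B)=V_1$, the tensor-algebra description of the Yoneda product and the fact that the bar differential preserves the weight grading, and then you use $\dim E_n(B)=N_n=\dim S_n$ for $n\le 3$ (Proposition \ref{prop:UpperBoundDimE^n} together with Corollary \ref{DimensionLambda^n}) to force all off-diagonal summands of $E_n(B)=\bigoplus_m H^n(\Omega^*(B,m))$ to vanish. This is precisely the argument the paper leaves implicit, with the weight bookkeeping spelled out correctly.
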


In order to show that $E_4(B)$ is $7$-dimensional we are going to compute $M_n =\dim \Omega^n(B,6)_e^{\Si}$. Clearly, if either $n<2$ or $n>6$ we have $M_n=0$.

\begin{lema}
If $n\in\{2,3,4,5,6\}$, then $\dim M_n\in \{1,17,68,90,41\}.$
\end{lema}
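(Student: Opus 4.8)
The plan is to evaluate formula \eqref{omega-inv} with $G=\Si$, $\La=B$ and internal degree $m=6$, feeding in the explicit Yetter--Drinfeld structure of $V$, the graded dual of $B_+$. From the basis $\mathcal B$ of $B$ one reads off $\dim B_1=\dim B_3=3$, $\dim B_2=4$, $\dim B_4=1$, $B_d=0$ for $d\geq 5$, and that the $\Si$-supports of $B_1,B_2,B_3,B_4$ are $\T3$, the set of $3$-cycles, $\T3$ and $\{e\}$ respectively; dually, $V_{1,t}$, $V_{3,t}$ and $V_{4,e}$ are one-dimensional for every transposition $t$, while $V_{2,\gamma}$ is two-dimensional for every $3$-cycle $\gamma$. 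Hence only positive $n$-partitions $\llq=(q_1,\dots,q_n)$ of $6$ with all parts in $\{1,2,3,4\}$ can contribute, a slot of degree $1$ or $3$ must be filled by a transposition, a slot of degree $2$ by a $3$-cycle and a slot of degree $4$ by $e$; combined with the constraint $x_1\cdots x_n=e$, this leaves precisely the following partitions with $\cX_{\llq}^{(n)}\neq\emptyset$: $(3,3)$ for $n=2$; $(1,1,4)$, $(1,2,3)$, $(2,2,2)$ for $n=3$; $(1,1,1,3)$, $(1,1,2,2)$ for $n=4$; $(1,1,1,1,2)$ for $n=5$; and $(1^6)$ for $n=6$.

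For each such $\llq$ I would describe $\cX_{\llq}^{(n)}$, its $\Si$-orbits under conjugation, and the stabilizers $G_{\lx}=\bigcap_{i=1}^nC_{\Si}(x_i)$. The key observation is that $G_{\lx}$ is trivial unless $\lx$ involves a single transposition value $t$ and no $3$-cycle -- so $\llq$ has parts only in $\{1,3,4\}$ -- in which case $G_{\lx}=C_{\Si}(t)$ has order $2$, or $\lx$ involves no transposition, which here forces $\llq=(2,2,2)$ and $G_{\lx}=\mathbb A_3$. For every remaining (``generic'') orbit one then has $\dim(V_{\llq,\lx})^{G_{\lx}}=\dim V_{\llq,\lx}=2^{\#\{i:q_i=2\}}$, and, such an orbit having length $6$, the number of generic orbits attached to $\llq$ equals $\tfrac16$ times the number of non-special tuples in $\cX_{\llq}^{(n)}$.

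The special orbits I would settle by computing a few actions once and for all. Using ${}^{(12)}a=-a$, ${}^{(12)}(bac)=-bac$ and ${}^{(g)}(abac)=abac$ for all $g\in\Si$ (the last via the defining relations of $B$), a transposition $t$ acts by $-1$ on $V_{1,t}$ and on $V_{3,t}$ and trivially on $V_{4,e}$; and since the matrix of ${}^{(123)}$ on $B_{2,(132)}=\langle ba,ac\rangle$ has characteristic polynomial $\lambda^2+\lambda+1$, the $3$-cycle $(123)$ acts on $V_{2,(123)}$ with the two nontrivial characters of $\mathbb A_3$ as eigenvalues. It follows that in every relevant partition the number of transposition-slots of a constant-transposition tuple is even, so $t$ acts trivially on $V_{\llq,\lx}$ and the corresponding special orbit contributes $1$, while the single special orbit of $(2,2,2)$, represented by $((123),(123),(123))$, contributes $\dim\big(V_{2,(123)}^{\ot 3}\big)^{\mathbb A_3}=2$.

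Finally I would count $\cX_{\llq}^{(n)}$ by the Frobenius class-algebra formula: the number of $k$-tuples of transpositions of $\Si$ whose product is $e$, respectively a fixed $3$-cycle, equals $3^{k-1}$ when $k$ is even and $0$ when $k$ is odd; the two short mixed partitions $(1,2,3)$ and $(1,1,2,2)$ are handled by a direct enumeration ($6$ and $12$ tuples). Substituting these cardinalities and the per-orbit contributions above into \eqref{omega-inv}, multiplying by the multinomial weights $|\llq|$ and summing over $\llq$, one obtains $(M_2,M_3,M_4,M_5,M_6)=(1,17,68,90,41)$. The only genuine obstacle is the orbit bookkeeping for $n=4,5,6$ -- separating the special from the generic tuples and verifying the orbit counts -- but with the transposition-product formula in hand this reduces to a short finite check.
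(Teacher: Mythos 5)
Your proposal is correct and follows essentially the same route as the paper: both evaluate formula \eqref{omega-inv} partition by partition, decompose each $\cX_{\llq}^{(n)}$ into $\Si$-orbits, and compute the stabilizer invariants, arriving at the same per-orbit contributions and totals. Your only departures are organizational conveniences---the Frobenius class-algebra count $3^{k-1}$ for even-length tuples of transpositions and the uniform classification of which tuples have nontrivial stabilizer---which replace the paper's case-by-case orbit bookkeeping but do not change the argument.
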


\begin{proof}
To compute the dimension of these vector spaces we use the relation \eqref{omega-inv}. It is well known that $B$ and its dual $B^*$ are isomorphic as braided algebras in the category of Yetter-Drinfeld $\ku{\Si}$-modules.  Let $B_{n,g}$ denote the set of all  $x\in B_n$ which are homogeneous of degree $g\in\Si$. We use the notation introduced in \S\ref{sub:omega-inv}. For simplicity we shall write $G$ and $\mathcal{P}_n$ instead of $\Si$ and $\mathcal{P}_n(6)$, respectively.  Since $B$ and its dual  are isomorphic as objects in $\yd{G}$  we have $V_{n,g}\cong B_{n,g}$.

Let $n=2$. If $\llq\in\mathcal{P}_2$ and $q_2>4$, then $B_{q_2}=0$. Henceforth, in the relation \eqref{omega-inv} all terms that corresponds to a partition $\llq$ as above vanishes. Therefore, the first sum in the right-hand side of \eqref{omega-inv} has only two terms, corresponding to $(2,4)$ and $(3,3)$. On the other hand, $B_{2,e}=0$ and  $B_{4,g}=0$ for all $g\neq e$. Thus, for $\llq=(2,4)$ and any $\lx\in \cX^{(2)}$ we have $B_{\llq,\lx}=0$, which means that the second sum in the right-hand side of \eqref{omega-inv} is zero.  

Let $\llq=(3,3)$. We do not get new pairs by permuting the components of $\llq$. Thus $|\llq|=1$. It is easy to see that $\cX_{\llq}^{(2)}=\{(hg^i,hg^i)\mid i=0,1,2\}$, where $h=(12)$ and $g=(123)$. If $\lx=(h,h)$, then $G_{h}=\langle h\rangle$. Thus $\cX_{\llq}^{(2)}$ is precisely the orbit $[\lx]$ of $\lx$, and $\R_{\llq}=\{\lx\}$. We deduce that $\dim M_2=\dim(B_{3,h}\ot B_{3,h})^{\langle h\rangle}$. Since, the subspace $B_{3,h}$ is generated by $bac$ and $bac\ot bac$ is $\langle h\rangle$-invariant, we conclude that $\dim M_2=1$.

To show that $\dim M_3=17$ we proceed in a similar way. The positive $3$-partition $\llq$ such that $B_{\llq}\neq0$ are the following: $(1,1,4)$, $(1,2,3)$ and $(2,2,2)$.

If $\llq=(1,1,4)$, then $\cX_{\llq}^{(3)}$ is the set of all triples $(hg^i,hg^i,e)$, where $i=0,1,2$. Hence $|\cX_{\llq}^{(3)}|=3$. Trivially, $|\llq|=3$. In this case there is a unique orbit of length $3$, so $\R_{\llq}=\{(h,h,e)\}$. The stabilizer of the unique element of $\R_{\llq}$ is $\langle h\rangle$ and $(B_{1,h}\ot B_{1,h}\ot B_{4,e})^{\langle h\rangle}=\langle a\ot a\ot abac\rangle_{\ku}$.

If $\llq=(1,2,3)$, then the elements of $\cX_{\llq}^{(3)}$ are the triples $(hg^{i+j},g^i,hg^j)$ with $i=1,2$ and $j=0,1,2$. Thus $|\cX_{\llq}^{(3)}|=6$ and $|\llq|=6$. Let $\lx:=(hg,g,h)$. Obviously, $$G_{\lx}\subseteq C_{G}(h)\bigcap C_{G}(g)=\langle h \rangle\cap\langle g\rangle=\{e\}.$$ Thus $\cX_{\llq}^{(3)}=[\lx]$ and $\dim (B_{\llq,\lx})^{G_{\lx}}=\dim(B_{1,hg}\ot B_{2,g}\ot B_{3,h})=2$.

If $\llq=(2,2,2)$, then $|\llq|=1$ and $\cX_{\llq}^{(3)}$ consists of the triples $(g^{i},g^i,g^i)$, with $i=1,2$. It follows that $|\cX_{\llq}^{(3)}|=2$ and $\R_{\llq}=\{\lx\}$, where $\lx=(g,g,g)$. Thus  $(B_{\llq,\lx})^{G_{\lx}}=(B_{2,g}\ot B_{2,g}\ot B_{2,g})^{\langle g\rangle}$.

As a representation of $\langle g\rangle$ the linear space $B_{2,g}=\langle ab,bc\rangle_{\ku}$ either decomposes as a direct sum of two $1$-dimensional representations or is irreducible, depending on the fact that there is, or there is not, a primitive root of  unity of order $3$  in $\ku$. In the former case, if  $\zeta$ is such a root of unity, it is easy to see that  $(B_{\llq,\lx})^{G_{\lx}}=\bigoplus_{i=1,2}B_{2,g}^{\zeta^i}\ot B_{2,g}^{\zeta^i}\ot B_{2,g}^{\zeta^i}$, where $B_{2,g}^{\zeta^i}=\langle ab-\zeta^ibc\rangle_{\ku}$. Hence $\dim (B_{\llq,\lx})^{G_{\lx}}=2$. On the other hand, if $B_{2,g}$ is irreducible then $B_{2,g}\ot B_{2,g}$ decomposes as a direct sum of $B_{2,g}$ with two copies of the trivial representation. Hence $B_{2,g}^{(3)}$ is a direct sum of three copies of $B_{2,g}$ with two copies of the trivial representation. Thus in this case we also get $\dim (B_{\llq,\lx})^{G_{\lx}}=2$. In conclusion, for $n=3$ we have
$$
 \dim M_3=3\cdot1+6\cdot 2+1\cdot 2=17.
$$
Let $n=4$. If $\llq$ is a positive $4$-partition such that $B_{\llq}\neq0$, then $\llq$ is either $(1,1,1,3)$ or $(1,1,2,2)$. In the first case $|\llq|=4$ and there is an orbit of length $3$ and $4$ orbits of length $6$. The orbit of length $3$ corresponds to $(h,h,h,h)$ and 
$$
(B_{1,h}\ot B_{1,h}\ot B_{1,h}\ot B_{3,h})^{\langle h\rangle}=\langle a\ot a\ot a\ot bac\rangle_{\ku}.
$$ 
If $\lx\in\R_{\llq}$ and $\lx$ is not $(h,h,h,h)$, then $G_{\lx}$ is trivial and $\dim (B_{\llq,\lx})^{G_{\lx}}=1$. For $\llq=(1,1,2,2)$ we get $|\llq|=6$. In this case there are two orbits of length $6$.  For each $\lx\in\R_{\llq}$ the stabilizer $G_{\lx}$ is trivial and we have $$
\dim (B_{\llq,\lx})^{G_{\lx}}=\dim(B_{1,x_1}\ot B_{1,x_2}\ot B_{2,x_3}\ot B_{2,x_4})=4.
$$
We get
$$\dim M_4=1\cdot 4\cdot1+4\cdot4\cdot1+2\cdot6\cdot4=68.$$
For $n=5$ there is a unique positive $5$-partition $\llq$ such that $B_{\llq}\neq 0$, namely $\llq=(1,1,1,1,2)$. Thus $|\llq|=5$. The $G$-set $\cX_{\llq}^{(5)}$ has $9$ orbits, each one of length $6$. If $\lx\in\R_{\llq}$, then $G_{\lx}$ is trivial and $\dim (B_{\llq,\lx})^{G_{\lx}}=2$, as $B_{2,x_5}$ is $2$-dimensional. Therefore, $$\dim M_5=9\cdot5\cdot2=90.$$
Finally, if $n=6$, then there is a unique positive $6$-partition of $6$, namely $\llq=(1,1,1,1,1,1)$. The elements of the $G$-set  $\cX^{(6)}_{\llq}$ are the $6$-tuples $\lx=\big(x_6x_5x_4x_3x_2,x_2,\dots,x_6\big)$, where $x_2,\dots,x_6$ are all transpositions.  Hence $|\cX^{(6)}_{\llq}|=3^5=243$. If $x_2=\cdots = x_6=h$, then $G_{\lx}=\langle h\rangle$ and $(B_{\llq,\lx})^{G_{\lx}}=\langle a^{(6)}\rangle_{\ku}$. On the other hand, if the components of $\lx$ are not equal, let us say that $x_i\neq x_j$, then $G_{\lx}\subseteq\langle x_i\rangle\cap\langle x_j\rangle$. Thus the stabilizer of $\lx$ is trivial and $\cX^{(6)}_{\llq}$ has $40$ orbits of this type, each one of length $6$. Therefore, in this case, $(B_{\llq,\lx})^{G_{\lx}}=B_{1,x_1}\ot \cdots\ot B_{1,x_6}$. We conclude  that $\dim M_6=1\cdot1\cdot1+1\cdot40\cdot1=41$.
\end{proof}

\begin{lema}\label{Generator4}
There is a unique (up to multiplication by a scalar)  class $\mf{d}\in E_4(B)_e$ which is  $\Si$-invariant and does not belong to $S\subseteq E(B)$. In particular, $\dim E_4(B)=7$.
\end{lema}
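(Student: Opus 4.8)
The plan is to locate the single extra cohomology class of $E_4(B)$ beyond $S_4$ by running the spectral sequence of Corollary~\ref{co:CE} while keeping track of the internal ($\N$-)grading, and then to pin down its $\Si$-action by restricting to a cyclic subgroup. Recall from Proposition~\ref{prop:UpperBoundDimE^n} and Corollary~\ref{DimensionLambda^n} that $6=\dim S_4\le\dim E_4(B)\le 7$, and that $S_4$ sits inside $E_4(B)$ in internal degree $4$. Since $\Si$ preserves the internal grading, it therefore suffices to exhibit a single class $\mf d\in E_4(B)$ of internal degree $\neq 4$ (automatically not in $S$) that is $\Si$-invariant and homogeneous of degree $e$; uniqueness up to scalars, modulo $S$, will then be immediate.

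In total degree $4$ the spectral sequence has $E_2$-terms $E_2^{0,4}$, $E_2^{2,2}$, $E_2^{3,1}$, $E_2^{4,0}$ (with $E_2^{1,3}=0$), of dimensions $3,1,2,1$ by Proposition~\ref{le:dim second page}. Using the description of $E(A)$ and of the $R$-action from Section~\ref{section: EA} one reads off the internal degrees: $E_2^{2,2}$, $E_2^{3,1}$, $E_2^{4,0}$ are concentrated in internal degree $4$, while $E_2^{0,4}$ is spanned by the classes of $\mf x^4,\mf y^4$ (internal degree $4$) and of $\mf z^2$ (internal degree $6$, since $\mf z$ is represented in internal degree $3$). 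Every differential preserves the internal grading; those leaving $E_r^{0,4}$ have target a subquotient of $E_2^{r,5-r}$, and each $E_2^{r,5-r}$ is either zero ($r=2$ or $r\ge 6$) or concentrated in internal degree $5$, so the class of $\mf z^2$ is a permanent cycle, and no differential enters $E_r^{0,4}$. Consequently the internal-degree-$4$ part of $E_4(B)$ has dimension exactly $6$ — hence equals $S_4$ — and the internal-degree-$6$ part is one-dimensional, spanned by a class $\mf d$ whose image under the restriction map $\operatorname{res}\colon E_4(B)\to E_4(A)$ (the bottom edge homomorphism of the spectral sequence) is $\mf z^2$. In particular $E_4(B)=S_4\oplus\ku\mf d$ and $\dim E_4(B)=7$.

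It remains to compute the $\Si$-action on $\ku\mf d$, and this is the step I expect to be the crux. As a one-dimensional object of $\yd{\Si}$ it is homogeneous of degree $e$ (the only central element of $\Si$) and $\Si$ acts on it through a character, necessarily the trivial one or the sign; one must rule out the sign. Here I would use that $A=\ku[a,b]$ is stable under the subgroup $\langle(13)\rangle$: indeed ${}^{(13)}a=-b$ and ${}^{(13)}b=-a$, so the restriction of the $(13)$-action to $A$ is exactly the algebra automorphism $\beta$ of \eqref{eq:DefinitionAlphaBeta}. Hence $\langle(13)\rangle$ acts on the whole Cartan--Eilenberg spectral sequence and on $E(A)$, compatibly with $\operatorname{res}$, and on $E_q(A)$ it acts as $\beta^{\otimes q}$. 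By the computation in the proof of Theorem~\ref{thm:R action} the automorphism $\beta$ fixes the class $\mf z$, hence $(13)$ fixes $\mf z^2$ in $E_4(A)$; since $\operatorname{res}$ is $\langle(13)\rangle$-equivariant, injective on $\ku\mf d$, and sends $\mf d$ to $\mf z^2$, it follows that ${}^{(13)}\mf d=\mf d$, so the character is trivial and $\mf d\in E_4(B)_e^{\Si}$. Because $\mf d$ has internal degree $6$ it does not lie in $S$, and since $E_4(B)=S_4\oplus\ku\mf d$ with both summands $\Si$-submodules, $\mf d$ is, up to scalars and addition of elements of $S$, the unique $\Si$-invariant class in $E_4(B)_e$ outside $S$; this gives the lemma, including $\dim E_4(B)=7$. (Alternatively, once $E_4(B)=S_4\oplus\ku\mf d$ is known, the $\Si$-invariance of $\mf d$ can be read off from the previous lemma: the complex $\Omega^*(B,6)_e^{\Si}$ has Euler characteristic $1-17+68-90+41=3$, its cohomology vanishes in degrees $2$ and $3$ by Remark~\ref{obs:Lambda_B}, the same internal-degree bookkeeping in total degree $6$ gives $H^6(\Omega^*(B,6))=S_6$ and $H^5(\Omega^*(B,6))=0$, and $\dim(S_6)_e^{\Si}=2$; hence $\dim H^4(\Omega^*(B,6))_e^{\Si}=1$.)
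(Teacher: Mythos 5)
Your proof is correct, but it reaches the conclusion by a genuinely different route than the paper. The paper works entirely inside the invariant subcomplex $\Omega^*(B,6)_e^{\Si}$: it first computes the dimensions $M_n=\dim\Omega^n(B,6)_e^{\Si}$ via the orbit--stabilizer formula \eqref{omega-inv} (this is the entire content of the lemma preceding the statement), and then a rank count using exactness in degrees $2$ and $3$ together with $H^6(\Omega^*(B,6))\simeq S_6$ gives $\dim H^4(M_*)\geq 1$; combined with the upper bound $N_4=7$ this yields the claim, with $\Si$-invariance and $\Si$-degree $e$ automatic from the construction. You instead track the internal $\N$-grading through the Cartan--Eilenberg spectral sequence: in total degree $4$ the only $E_2$-class of internal degree $6$ is $[\mf{z}^2]_0\in E_2^{0,4}$, every possible target $E_r^{r,5-r}$ is concentrated in internal degree $5$ or vanishes, so $[\mf{z}^2]_0$ is a permanent cycle, while the internal-degree-$4$ part of $E_4(B)$ is squeezed between $\dim S_4=6$ and the $E_2$-bound $6$. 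This bypasses the computation of the $M_n$ altogether and identifies $\mf{d}$ concretely as the unique lift of $\mf{z}^2$ along the restriction $E_4(B)\to E_4(A)$; the price is that $\Si$-invariance is no longer free, and your argument for it --- a one-dimensional object of $\ydg$ is supported at $e$ and carries a character, $(13)$ preserves $A$ and acts on it as $\beta$, restriction is $(13)$-equivariant and injective on $\ku\mf{d}$, and $\beta_2(\mf{z})=\mf{z}$ forces the character to be trivial since $\sgn(13)=-1$ --- is sound. Two points you should make explicit in a write-up: that the spectral sequence of Corollary \ref{co:CE} decomposes along the internal grading (this holds because the resolution of Lemma \ref{le:B R bimod resol explicitly} is graded and the action \eqref{ec:ActionDualNumbers} lowers the $A$-internal degree by exactly the degree of $c$), and that conjugation by $(13)$ acts on the whole construction because it preserves both $A$ and $R$. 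Finally, as in the paper itself, ``unique up to a scalar'' must be read modulo $(S_4)_e^{\Si}$, which is nonzero; your interpretation (uniqueness within internal degree $6$) is the intended one.
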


\begin{proof}
With the notation from the above lemma, the complex $\Ou{*}{B,6}_e^{\Si}$ can be written as follows:
$$
0\overset{}{\longrightarrow}0\overset{}{\longrightarrow}0\overset{}{\longrightarrow} M_2\overset{\delta_6^2}{\longrightarrow} M_3\overset{\delta_6^3}{\longrightarrow} M_4\overset{\delta_6^4}{\longrightarrow} M_5\overset{\delta_6^5}{\longrightarrow} M_6\overset{}{\longrightarrow} 0.
$$
By  Remark \ref{obs:Lambda_B}, it is exact in degree $2$ and $3$, so $\delta_6^2$ is injective and $\dim \Imm\delta_6^3=\dim M_3-\dim M_2=16$.
On the other hand, the sequence
$$
0\overset{}{\longrightarrow} \Ker(\delta_6^5) \overset{}{\longrightarrow} M_5\overset{\delta_6^5}{\longrightarrow} M_6\overset{}{\longrightarrow} H^6(M_*)\overset{}{\longrightarrow} 0
$$
is exact. We have seen that the sixth cohomology group of $\Ou{*}{B,6}$ identifies to $S_6$. Via this identification, a cohomology class in $H^6(M_*)$ corresponds to an $\Si$-invariant homogeneous element of degree $e$  in $S_6$. It is easy to see that $(S_6)^{\Si}_e$  is  spanned by $\a^6+\b^6+\cc^6$ and $\a^4\b^2$, so it $2$-dimensional. Thus $\dim \Ker(\delta_6^5)= 90-41+2=51$, so the dimension of the image of $\delta_6^4$ is at most $51$. We get that $ \dim\Ker\delta_6^4\geq\dim M_4-\dim51=17$, so $\dim H^4(M_*)\geq 1$. We conclude the proof using Remark \ref{obs:Lambda_B} and Proposition \ref{le:EB hasta 4}.
\end{proof}

\begin{theorem}\label{thm:dimE^n}
 We have $\dim E_n(B)=N_n$, for all $n\in\N$.
\end{theorem}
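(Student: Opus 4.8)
The plan is to combine the upper bound already established in Proposition~\ref{prop:UpperBoundDimE^n} with a matching lower bound, the latter coming from a degeneration argument for the Cartan–Eilenberg spectral sequence of Corollary~\ref{co:CE}. We already know $\dim E_n(B)\le N_n$ for all $n$, with equality for $n\le 3$, and that $\dim E_4(B)=7=N_4$ by Lemma~\ref{Generator4}. So the real content is to show that the spectral sequence degenerates at $E_2$, i.e.\ that $d_r^{p,q}=0$ for all $r\ge 2$ and all $(p,q)$. Once this is known, $\dim E_n(B)=\sum_{p}\dim E_2^{p,n-p}=N_n'=N_n$ by the computation in the proof of Proposition~\ref{prop:UpperBoundDimE^n}.

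First I would pin down the two ``boundary'' edges of the $E_2$-page explicitly using Proposition~\ref{le:dim second page}: the algebra $E_2^{0,*}=S$ (the braided symmetric algebra, generated by $\mf f,\mf g$ in bidegree $(0,1)$ and $\mf h$ in bidegree $(0,4)$), and the polynomial ring $E_2^{*,0}=\ku[\mf l]$ with $\mf l$ in bidegree $(1,0)$; and by Proposition~\ref{pr:E_2} the whole page is generated as a bigraded algebra by $\cE=\{\mf f,\mf g,\mf h,\mf l\}$. Since the differentials $d_r$ satisfy the graded Leibniz rule, it suffices to check that each of the four generators is a permanent cocycle. For $\mf f,\mf g,\mf h\in E_2^{0,*}$ this is automatic: a differential $d_r$ raises $p$ by $r\ge 2$ and lowers $q$ by $r-1\ge 1$, so it sends $E_r^{0,q}$ into $E_r^{r,q-r+1}$, but the source sits in the leftmost column where it is detected by the edge homomorphism $E(B)\twoheadrightarrow E_2^{0,*}=S$; concretely, by Proposition~\ref{prop:La B embeds into EB}(b) the subalgebra $\widetilde E(B)$ generated by $E_1(B)$ already maps isomorphically onto $S$, so $\mf f,\mf g$ survive, and $\mf h=[\mf z^2]_0$ is hit by the $4$-dimensional class: by Lemma~\ref{Generator4} there is a $\Si$-invariant central class $\mf d\in E_4(B)$ not in $S$, whose image in $E_\infty^{*,*}$ must be a nonzero element outside $E_\infty^{0,4}$, forcing $E_\infty^{p,4-p}\ne 0$ for some $p>0$, and tracking bidegrees shows $\mf h$ cannot be killed. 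The remaining generator $\mf l\in E_2^{1,0}$ is the class of the $1$-cocycle $f\colon R_+\to\ku$, $f(c)=1$; it is the image under the inflation map $E(R)\to E(B)$ of the polynomial generator $X\in E(R)=\ku[X]$ (Example~\ref{ex:Omega(R)}) coming from the inclusion $R=\ku[c]\hookrightarrow B$, hence it lifts to a genuine cohomology class in $E_1(B)$ (namely $\cc=[q_c]$) and so is a permanent cocycle.

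With all four algebra generators of $E_2^{*,*}$ being permanent cocycles, the Leibniz rule forces $d_r\equiv 0$ for every $r\ge 2$, so $E_\infty^{p,q}=E_2^{p,q}$ and therefore $\dim E_n(B)=\sum_{p=0}^n\dim E_2^{p,n-p}=N_n'=N_n$, completing the proof.

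The step I expect to be the main obstacle is the argument that $\mf h=[\mf z^2]_0$ is a permanent cocycle. The cleanest route is the one sketched above — extract from Lemma~\ref{Generator4} a central $4$-cocycle $\mf d$ outside $S$ and compare its associated graded class with the $E_\infty$-page — but one must be careful that the filtration on $E_4(B)$ places $\mf d$ in a strictly positive filtration degree (so that its symbol lies in $E_\infty^{p,4-p}$ with $p\ge 1$), and that this symbol, being in bidegree $(p,4-p)$ with $p\ge 1$, can only be a nonzero multiple of $\mf h\mf l^{\,p}$ or $\mf f^2\mf l^{\,p}$ type monomials; since $d_r\mf h$ would lie in bidegree $(r,5-r)$ which by Proposition~\ref{le:dim second page} is either zero or of the ``wrong'' parity to receive $\mf h$, a direct bidegree count already rules out $d_2\mf h,d_3\mf h\ne 0$, and for $r\ge 4$ the target $E_r^{r,5-r}$ vanishes outright. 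One then only needs that $d_4\mf h=0$, which follows because $E_4^{4,1}$ is at most the image of $\mf f^2\mf l^2,\mf f\mf g\mf l^2$-type classes and $\mf f,\mf g,\mf l$ being permanent forces their products to be as well, so the sub-bigraded-algebra they generate consists of permanent cocycles; thus $d_4\mf h$, if nonzero, would be a nonzero permanent cocycle that is a boundary, contradiction. This last bookkeeping is routine but needs to be written out carefully.
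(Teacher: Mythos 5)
Your overall strategy is exactly the paper's: exploit the multiplicativity of the Cartan--Eilenberg spectral sequence, use that $E_2^{*,*}$ is generated by $\mf{f},\mf{g},\mf{h},\mf{l}$ (Proposition \ref{pr:E_2}), check that each generator is a permanent cocycle, and conclude degeneration at $E_2$, whence $\dim E_n(B)=N'_n=N_n$. The treatment of $\mf{l}$ and the inductive Leibniz argument over $r$ are fine. However, two of your supporting claims are false and your main argument for $\mf{h}$ is circular. First, $E_2^{0,*}\neq S$: by Proposition \ref{le:dim second page} the Hilbert function of $E_2^{0,*}$ is $1,2,2,2,3,4,\dots$, whereas that of $S$ is $1,3,5,6,6,\dots$ (Corollary \ref{DimensionLambda^n}); nor is there a surjection $E(B)\twoheadrightarrow E_2^{0,*}$ --- the edge map is $E_n(B)\twoheadrightarrow E_\infty^{0,n}\hookrightarrow E_2^{0,n}$. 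The survival of $\mf{f}$ and $\mf{g}$ should instead be deduced from the count $\dim E_1(B)=3=\dim E_2^{0,1}+\dim E_2^{1,0}$, which forces $E_\infty^{0,1}=E_2^{0,1}$.

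Second, the step you yourself flag as the main obstacle is not closed. The claim that $E_r^{r,5-r}$ ``vanishes outright'' for $r\ge 4$ is wrong: $\dim E_2^{4,1}=m_1=2$ and $\dim E_2^{5,0}=m_0=1$ (and $\dim E_2^{3,2}=m_2=1$, so $d_3\mf{h}$ is not excluded by bidegree either; I do not see what parity argument you intend). The argument that $d_4\mf{h}=0$ because its image would be ``a nonzero permanent cocycle that is a boundary'' is circular: membership in the subalgebra generated by the permanent cocycles $\mf{f},\mf{g},\mf{l}$ only guarantees that $\mf{f}\mf{l}^4$ and $\mf{g}\mf{l}^4$ are cocycles on every page, not that they fail to be boundaries --- and the only differential that could turn them into boundaries is precisely $d_4$ on $E_4^{0,4}$, i.e.\ $d_4\mf{h}$ itself. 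Likewise, in your second paragraph the inference from ``$\mf{d}\notin S$'' to ``the symbol of $\mf{d}$ lies outside $E_\infty^{0,4}$'' is unjustified, and even granted it would not show that $\mf{h}$ survives. The correct argument, which is the paper's, is a plain dimension count: Lemma \ref{Generator4} and $\dim S_4=6$ give $\dim E_4(B)\geq 7$, while $\sum_{p}\dim E_2^{p,4-p}=7$, so $E_\infty^{p,4-p}=E_2^{p,4-p}$ for every $p$; in particular no differential on any page can leave bidegree $(0,4)$, so $\mf{h}$ is a permanent cocycle. With that substitution your proof closes up and coincides with the one in the text.
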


\begin{proof}
By Proposition \ref{le:EB hasta 4} and Lemma \ref{Generator4} it follows that the claimed relation holds for $n\leq4$.
In order to prove that it is true for all $n$, by the proof of Proposition \ref{prop:UpperBoundDimE^n}, it is enough to show that the Cartan-Eilenberg spectral sequence degenerates at the second page, i.e. $E_\infty^{p,q}=E_2^{p,q}$, for any $p,q\geq0$. In other words, we have to prove that all maps $d_r^{p,q}$  vanish for $r\geq 2$.

Let $r=2$. Since $d_2^{*,*}$ satisfies  the graded Leibniz rule, these maps are  trivial if and only if $d_2^{1,0}, d_2^{0,1}$ and $d_2^{4,0}$ vanish on the generators of $E_2^{*,*}$, that is  the following relations hold true
\[
 d_2^{1,0}(\mf{f})=d_2^{1,0}(\mf{g})= d_2^{4,0}(\mf{h})=d_2^{0,1}(\mf{l})=0.
  \]
 Obviously, by definition of spectral sequences,  the maps $d_2^{1,0}$ and  $d_2^{0,1} $ are zero. If we suppose that $d_2^{4,0}(\mf{h})$ is not zero,  then $\dim E_3^{4,0}<\dim E_2^{4,0}$. As in the proof of Proposition \ref{prop:UpperBoundDimE^n}, it would follow that $\dim E_4(B)<7$, fact that is not possible. In conclusion, $d_2^{p,q}=0$, for any $p,q>0$. In particular,  we have an isomorphism of bigraded algebras  $E_3^{*,*}\simeq E_2^{*,*}$, so $E_3^{*,*}$ is generated by elements of degree at most $4$. Thus, we  can repeat the above argument to show by induction that $d_r^{p,q}=0$ for any  $r\geq2$ and all $p,q\geq0$.
\end{proof}

In order to prove that the algebra $E(B)$  is generated by elements of degree at most $4$, we need the following result, which probably is well known. For completeness, we include a proof of it.

\begin{lema}
Let $H=\oplus_{n\geq0}H^n$ be a graded algebra. We assume that each $H^n$ is filtered
$$
{H^n=F^0H^n\supseteq F^1H^n\supseteq \cdots\supseteq F^nH^n\supseteq 0}
$$
such that $(F^pH^q)(F^rH^s)\subseteq F^{p+r}H^{q+s}$.
\begin{enumerate}
\item[(a)] The linear spaces $F^pH=\oplus_{n\geq0}F^pH^n$ defines a decreasing  algebra filtration of $H$ such that the graded associated $\gr_FH$ $=\oplus_{p\geq0}F^pH/F^{p+1}H$ is a bigraded algebra with the homogeneous component of bidegree $(p,q)$ given by $(\gr_FH)^{p,q}=F^pH^{p+q}/F^{p+1}H^{p+q}$.

\item[(b)] If $\gr_FH$ is generated as an algebra by some homogeneous elements $a_1,\dots,a_r$ of degree at most $n$, then $H$ is also generated by some homogeneous elements $h_1,\dots, h_r$ of degree at most $n$.
\end{enumerate}
\end{lema}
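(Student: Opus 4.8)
The plan is to prove (a) by straightforward bookkeeping and (b) by the standard ``lift the generators, compare associated gradeds'' argument; throughout, the \emph{degree} of a homogeneous element of the bigraded algebra $\gr_FH$ is meant to be the total degree $p+q$, which matches the cohomological degree under the comparison with $E_2^{*,*}$ used in the application.

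For part (a): first I would record that $F^\bullet H$ is a decreasing algebra filtration. The equality $F^0H=H$ and the inclusions $F^pH\supseteq F^{p+1}H$ are immediate from the corresponding facts in each degree, and multiplicativity $(F^pH)(F^rH)\subseteq F^{p+r}H$ follows by summing the hypothesis $(F^pH^q)(F^rH^s)\subseteq F^{p+r}H^{q+s}$ over $q$ and $s$. Since each $F^pH$ is a graded subspace whose degree-$n$ part is $F^pH^n$, the quotient $F^pH/F^{p+1}H$ inherits the grading; re-indexing by the pair $(p,q)$ with $p+q=n$ then yields the claimed bigraded decomposition of $\gr_FH$, where one notes that $F^pH^m=0$ for $p>m$, so only $q\ge0$ contributes. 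Bigraded multiplicativity is then the observation that the product of a representative in $F^pH^{p+q}$ with one in $F^rH^{r+s}$ lies in $F^{p+r}H^{(p+r)+(q+s)}$ and is well defined modulo $F^{p+r+1}$ of that space.

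For part (b): for each $i$ I would choose a lift $h_i\in F^{p_i}H^{d_i}$ of $a_i$, where $(p_i,q_i)$ is the bidegree of $a_i$ and $d_i=p_i+q_i\le n$; thus $h_i$ is homogeneous of degree $\le n$ in $H$. Let $H'$ be the subalgebra of $H$ generated by $h_1,\dots,h_r$, which is graded because the $h_i$ are homogeneous, and filter it by $F^pH':=H'\cap F^pH$. The inclusion $H'\hookrightarrow H$ induces a homomorphism of bigraded algebras $\iota\colon\gr_FH'\to\gr_FH$; it is injective because in each bidegree its kernel consists of classes of elements $h\in H'\cap F^pH$ that already lie in $F^{p+1}H$, hence in $F^{p+1}H'$, so such classes vanish. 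Its image is a subalgebra of $\gr_FH$ containing every $a_i$, hence is all of $\gr_FH$ since the $a_i$ generate, and therefore $\iota$ is an isomorphism. Finally I would fix a total degree $m$ and run a downward induction on $p$, from $p=m$ (where the filtration terminates, $F^{m+1}H^m=0$) down to $p=0$, using at each stage that $\iota$ is an isomorphism in bidegree $(p,m-p)$ to deduce $F^pH'^m=F^pH^m$; the end of the induction, $p=0$, gives $H'^m=H^m$, and as $m$ was arbitrary we conclude $H'=H$, i.e.\ $H$ is generated by $h_1,\dots,h_r$, all of degree $\le n$.

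The only non-formal point — and thus the main obstacle — is the final induction, namely upgrading ``$\iota$ is an isomorphism on associated gradeds'' to ``$H'=H$''. This genuinely uses that the filtration on each homogeneous component $H^m$ is finite (bounded, hence separated), which is exactly what the hypothesis $H^m=F^0H^m\supseteq\cdots\supseteq F^mH^m\supseteq0$ provides; I would be careful to invoke this finiteness explicitly and to verify that the filtration induced on the subalgebra $H'$ is precisely $H'\cap F^\bullet H$, since that is what makes $\iota$ injective and lets the induction close.
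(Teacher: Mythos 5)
Your proof is correct and follows essentially the same route as the paper: lift the homogeneous generators of $\gr_FH$ to homogeneous elements of $H$ and run a bounded downward induction on the filtration level within each fixed total degree, which is exactly where the finiteness $F^{m+1}H^m=0$ is used. The only difference is packaging — you pass through the subalgebra $H'$ generated by the lifts and the (surjectivity of the) map $\gr_FH'\to\gr_FH$, while the paper directly shows by the same induction that each $F^{n-i}H^n$ is spanned by monomials in the lifted elements; the injectivity of your map $\iota$ is in fact never needed.
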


\begin{proof}
The first part is an easy exercise. Let us prove (b). Note that the  component of degree $n$ of  $\gr_FH$ is  $\gr^n_FH=\oplus_{p=0}^nF^pH^{n-p}/F^{p+1}H^{n-p}$. If $n_i=\deg a_i$, we may assume that $a_i\in F^{p_i}H^{n_i-p_i}/F^{p_i+1}H^{n_i-p_i}$. Let $h_i$ be an electronic in $F^{p_i}H^{n_i-p_i}$ such that $h_i+ F^{p_i+1}H^{n_i-p_i}=a_i$.

Let $E^{p,q}=(\gr_FH)^{p,q}=F^pH^{p+q}/F^{p+1}H^{p+q}$. Then $\gr^n_FH=\oplus_{p+q=n}E^{p,q}$. We also define the graded algebra $E^{*,0}=\oplus_{p\geq0}E^{p,0}=\oplus_{p\geq0}F^pH^p$. Since $E^{p,q}\cdot E^{p',q'}\subseteq E^{p+p',q+q'}$ and $a_1,\dots, a_r$ generate $E^{*,*}$, it follows that $E^{*,0}$ is generated by those elements $h_i=a_i$ that belong to $E^{p_i,0}$.

We claim that any element in $H$ is a linear combination of monomials in the elements $h_1,\dots,h_r$.
We fix $n$ and we will prove by induction on $i$ that the linear space $F^{n-i}H^n$ is spanned by monomials in $h_1,\dots,  h_r$.
If $i=0$, this follows by the foregoing remark. Let us assume that $F^{n-i}H^n$ satisfies the required property. Let $x\in F^{n-i-1}H^{n}$. If $x\in F^{n-i}H^{n}\subseteq F^{n-i-1}H^{n}$, we have nothing to prove. Otherwise, the class of $x$ in $E^{n-i,i}$ is a linear combination of monomials  $x+F^{n-i}H^n=\sum\alpha_{i_1,\dots,i_k}a_{i_1}\cdots a_{i_k}$. Hence $x-\sum\alpha_{i_1,\dots, i_k}h_{i_1}\cdots h_{i_k}\in F^{n-i}H^n$. We conclude by applying the induction hypothesis.
\end{proof}

\begin{prop}\label{prop:GeneratorsYoneda}
 The $\ku$-algebra $E(B)$ is generated by four elements of degree at most $4$.
\end{prop}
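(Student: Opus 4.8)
The plan is to lift the generation statement for the second page $E_2^{*,*}$ of the Cartan--Eilenberg spectral sequence to the Yoneda ring $E(B)$ itself, by passing through the associated graded algebra and invoking the preceding lemma.

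First I would note that the multiplicative spectral sequence of Corollary~\ref{co:CE} is of first-quadrant type and converges to $E(B)$. Hence each cohomology group $E_n(B)$ carries a finite decreasing filtration $0=F^{n+1}E_n(B)\subseteq F^{n}E_n(B)\subseteq\cdots\subseteq F^0E_n(B)=E_n(B)$ with $F^pE_n(B)/F^{p+1}E_n(B)\cong E_\infty^{p,\,n-p}$, and the multiplicative structure yields $\big(F^pE_q(B)\big)\big(F^rE_s(B)\big)\subseteq F^{p+r}E_{q+s}(B)$. Thus $H:=E(B)$ with this filtration satisfies the hypotheses of the preceding lemma, and there is an isomorphism of bigraded algebras $\gr_FE(B)\cong E_\infty^{*,*}$.

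Next I would invoke Theorem~\ref{thm:dimE^n}: the spectral sequence degenerates at the second page, so $E_\infty^{p,q}=E_2^{p,q}$ for all $p,q\geq0$, whence $\gr_FE(B)\cong E_2^{*,*}$ as bigraded algebras. By Proposition~\ref{pr:E_2}, $E_2^{*,*}$ is generated by the four elements $\mf{f},\mf{g},\mf{h},\mf{l}$, of bidegrees $(0,1),(0,1),(0,4)$ and $(1,0)$; their total degrees are $1,1,4$ and $1$, hence all at most $4$. Therefore $\gr_FE(B)$ is generated as an algebra by four homogeneous elements of degree at most $4$, and part~(b) of the preceding lemma produces homogeneous elements $h_1,\dots,h_4\in E(B)$ of degree at most $4$ --- concretely, three of degree $1$ lifting $\mf{f},\mf{g},\mf{l}$ and one of degree $4$ lifting $\mf{h}$ --- that generate $E(B)$. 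This is exactly the assertion of the proposition.

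The only delicate point is the bookkeeping of the first step: one has to check that the filtration produced by the spectral sequence is indexed precisely as the preceding lemma requires (finite length in each cohomological degree, the multiplicativity shift $(F^pE_q)(F^rE_s)\subseteq F^{p+r}E_{q+s}$, and $\gr_F$ matching $E_\infty^{*,*}$ with the correct bigrading). For a multiplicative first-quadrant spectral sequence this is routine, so I do not expect any real difficulty; the substantive input is Proposition~\ref{pr:E_2} together with Theorem~\ref{thm:dimE^n}, both already established.
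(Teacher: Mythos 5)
Your proposal is correct and follows essentially the same route as the paper: degeneration at $E_2$ (Theorem \ref{thm:dimE^n}) gives $E_\infty^{*,*}\cong E_2^{*,*}$, Proposition \ref{pr:E_2} supplies the four generators of total degree at most $4$, and the preceding filtration lemma lifts generation from $\gr_F E(B)\cong E_\infty^{*,*}$ to $E(B)$. Your extra care with the filtration bookkeeping is sound and merely makes explicit what the paper leaves implicit.
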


\begin{proof} 
By the proof of Theorem \ref{thm:dimE^n}, the Cartan-Eilenberg spectral sequence collapses at the second page, so the algebras $E_\infty^{*,*}$ and $E_2^{*,*}$ are isomorphic. Thus, by Proposition \ref{pr:E_2}, the algebra $E_\infty^{*,*}$ is generated by elements of degree at most $4$. Now we can apply the preceding lemma, as the spectral sequence  converges to $E(B)$.
\end{proof}

\begin{theorem}\label{te:main}
 The graded algebra $E(B)$ is isomorphic to the polynomial ring $S[X]$, where the grading on the latter ring is taken so that $\deg X=4$.
\end{theorem}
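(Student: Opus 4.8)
The plan is to build a surjective morphism of graded algebras $\Psi\colon S[X]\to E(B)$ and then promote it to an isomorphism by the dimension count of Theorem~\ref{thm:dimE^n}. The construction of $\Psi$ goes as follows. By Proposition~\ref{prop:La B embeds into EB}(b) there is an embedding of graded algebras $\iota\colon S\hookrightarrow E(B)$ with image $\widetilde{E}(B)$. Let $\mf{d}\in E_4(B)_e$ be the $\Si$-invariant class of Lemma~\ref{Generator4}. Since $E(B)$ is graded braided commutative (Lemma~\ref{BraidedCommutative}) and $\mf{d}$ has degree $e$ and even cohomological degree $4$, for every homogeneous $w\in E_m(B)$ one gets
\[
\mf{d}\cdot w=(-1)^{4m}\,{}^{e}w\cdot\mf{d}=w\cdot\mf{d},
\]
so $\mf{d}$ is central in $E(B)$. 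Hence $\iota$ together with the assignment $X\mapsto\mf{d}$ extends to a morphism of graded algebras $\Psi\colon S[X]\to E(B)$, where $\deg X=4$.

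Next I would prove that $\Psi$ is surjective. Its image is the subalgebra of $E(B)$ generated by $E_1(B)\cup\{\mf{d}\}$, so it suffices to show that this subalgebra contains $E_n(B)$ for every $n\le 4$, because $E(B)$ is generated in cohomological degrees at most $4$ by Proposition~\ref{prop:GeneratorsYoneda}. For $n\le 1$ this is immediate. For $n=2,3$ we have $S_n\subseteq E_n(B)$ by Proposition~\ref{prop:La B embeds into EB}(b), and $\dim S_n=\dim E_n(B)=N_n$ by Corollary~\ref{DimensionLambda^n} and Theorem~\ref{thm:dimE^n}; hence $E_n(B)=S_n\subseteq\widetilde{E}(B)$. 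For $n=4$, the subspace $S_4\subseteq E_4(B)$ is $6$-dimensional, $\mf{d}\in E_4(B)\setminus S$, and $\dim E_4(B)=7$, so $E_4(B)=S_4\oplus\ku\mf{d}$ lies in the image of $\Psi$. Therefore $\Psi$ is onto.

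Finally, injectivity follows from a Hilbert-series comparison. Since $S[X]=\bigoplus_{k\ge 0}S\,X^k$ with $\deg X=4$, we have $\dim S[X]_n=\sum_{k\ge 0}\dim S_{n-4k}$, and by Corollary~\ref{DimensionLambda^n} the basis $\cS^m$ of $S_m$ gives $\dim S_0=1$, $\dim S_1=3$, $\dim S_2=5$ and $\dim S_m=6$ for $m\ge 3$; in particular $\dim S[X]_n=\dim S[X]_{n-4}+6$ for $n\ge 4$. Comparing with the values $N_0=1$, $N_1=3$, $N_2=5$, $N_3=6$ and the recurrence~\eqref{def:N_n}, we obtain $\dim S[X]_n=N_n$ for all $n$. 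By Theorem~\ref{thm:dimE^n} also $\dim E_n(B)=N_n$, so the surjective graded map $\Psi$ is bijective in each degree, which proves the theorem.

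I expect the only genuinely delicate points — now that all the spectral-sequence machinery is in place — to be the verification that $\mf{d}$ is central (this is exactly where the braiding sign vanishes because the cohomological degree $4$ is even and the $\Si$-action collapses because $\mf{d}$ has group-degree $e$, which is why the class must be chosen with these properties) and the small bookkeeping showing that $E_{\le 4}(B)$ already lies in the subalgebra generated by $E_1(B)$ and $\mf{d}$; everything else reduces to dimension counts already carried out in the preceding sections.
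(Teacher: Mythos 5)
Your proposal is correct and follows essentially the same route as the paper's proof: define the algebra map $S[X]\to E(B)$ via the embedding of $S$ and $X\mapsto\mf{d}$ (using braided graded commutativity to see that $\mf{d}$ is central), deduce surjectivity from generation in cohomological degree at most $4$ together with $E_n(B)=S_n$ for $n\le 3$ and $E_4(B)=S_4\oplus\ku\mf{d}$, and conclude by the Hilbert-series comparison $\dim S[X]_n=N_n=\dim E_n(B)$. The only (harmless) variation is that you derive centrality of $\mf{d}$ from its group-degree being $e$, whereas the paper invokes its $\Si$-invariance; both are legitimate consequences of Lemma \ref{Generator4} and Lemma \ref{BraidedCommutative}.
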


\begin{proof}
Let $H_{S}$ be the Hilbert series of $S$. By Corollary \ref{DimensionLambda^n}, we have
$$H_{S}=1+3t+5t^2+6\sum_{n\geq 3}t^n.$$
On the other hand, the Hilbert series of $\Bbbk[X]$ is $H=\sum_{n\geq 0}t^{4n}$, as $X$ is of degree $4$. Hence $H_{S[X]}$, the Hilbert series of $S[X]\cong S\ot\, \Bbbk [X]$, is  the product of $H_{S}$ and $H$. Thus $H_{S[X]}=\sum_{n\geq 0}N_nt^n$. Henceforth, the homogeneous components of $S[X]$ and $E(B)$ are equidimensional.

Recall that $E(B)$ is braided graded commutative, see Lemma \ref{BraidedCommutative}. Therefore, any $\Si$-invariant cohomology class of even degree is a central element in $E(B)$. In particular, by Lemma \ref{prop:GeneratorsYoneda}, it follows that $\mf{d}$ is central. Let $\Phi:S[X]\to E(B)$ be the unique algebra morphism that extends the identity of $S$ and  maps $X$ to $\mf{d}$. Since $E_n(B)=S_n$, for $n\leq 3$, and $E_4(B)=S_4\oplus \Bbbk\mf{d}$ it follows that $\oplus_{n\leq 4}E_n(B)$ is a subspace of the subalgebra $S[\mf{d}]$ generated by $S$ and $\mf{d}$. By Proposition \ref{prop:GeneratorsYoneda} we deduce that $\oplus_{n\leq 4}E_n(B)$ generates $E(B)$ as an algebra, so $\Phi$ is surjective. In view of the fact that $E_n(B)$ and $S_n$ have the same dimension, we conclude that $\Phi$ is an isomorphism.
\end{proof}

As an application of the above theorem, we compute the cohomology ring of the bosonization of $B$, that is of the ordinary Hopf algebra whose underlying algebra and coalgebra structures are the smash product algebra $B\#\ku\Si$ and the smash coproduct coalgebra $B\#\ku\Si$ (see \S\ref{subsec:bosonization} and Remark \ref{obs:bosonization}).   

By Theorem \ref{te:H-invariants} there is an isomorphism $E(B\#\ku\Si)\cong E(B)^{\Si}$ of graded algebras. Here the invariant subring is taken with respect to the canonical $\Si$-action on $E(B)$ which is obtained  by using the fact that $\Ou{*}{B}$ is a complex in the category of Yetter-Drinfeld $\ku\Si$-modules. By our main result we identify $E(B)$ and $S[X]$. Since $X$ corresponds to the $\Si$-invariant cohomology class $\d$, a polynomial in $S[X]$ is invariant if and only if its coefficients are so.  Thus we have to investigate the properties of  the ring $S^{\Si}$.

\begin{lema}
Let $S$ denote the braided symmetric algebra of $E_1(B)$. Then:
\begin{enumerate}
 \item[(a)]The algebra $S^{\,\Si}$ is generated by $\mf{u}=\a(\b+\cc)$ and $\mf{v}=\a^2+\b^2+\cc^2$.
 \item[(b)] The generators $\mf{u}$ and $\mf{v}$ commute and  $2\mf{u}\mf{v}^2+3\mf{u}^2\mf{v}-9\mf{u}^3=0$.
 \item[(c)] The Hilbert series of $S^{\,\Si}$ is $H_{S^{\,\Si}}=1+2t^2+3\sum_{n\geq 2}t^{2n}$.
 \item[(d)] There is an isomorphism  $S^{\,\Si}\cong\ku[U,V]/(U^2V-UV^2)$ of graded algebras.
\end{enumerate}
\end{lema}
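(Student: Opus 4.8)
The plan is to produce the two invariants by hand, compute $\dim S_n^{\Si}$ for every $n$, and then identify the abstract ring.

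First I would record that a transposition acts on the generators by the corresponding permutation of $\{\a,\b,\cc\}$ twisted by the sign, and a $3$-cycle by the cyclic permutation, so that ${}^{(12)}\a=-\a$, ${}^{(12)}\b=-\cc$, ${}^{(12)}\cc=-\b$, ${}^{(123)}\a=\b$, ${}^{(123)}\b=\cc$, ${}^{(123)}\cc=\a$. From this together with the defining relations of $S$ one checks at once that $\mf v=\a^2+\b^2+\cc^2$ is fixed, and that $\mf u=\a\b+\a\cc$ is fixed because $(123)$ fixes each of $\a\b$ and $\a\cc$ while $(12)$ interchanges them. Since $S$ embeds into $E(B)$, which is braided graded commutative by Lemma \ref{BraidedCommutative}, and $\mf u,\mf v$ are $\Si$-invariant of even $\N$-degree, the braided commutativity relation degenerates to ordinary centrality; in particular $\mf u$ and $\mf v$ commute, which is the first assertion of (b).

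The core step is the computation of $\dim S_n^{\Si}$. For $n\ge 2$ the set $\cS^n=\{\a^n,\b^n,\cc^n,\a^{n-1}\b,\a^{n-1}\cc,\a^{n-2}\b^2\}$ is a basis of $S_n$ by Corollary \ref{DimensionLambda^n}. I would split $S_n$ according to its $\Si$-grading into the $e$-graded, transposition-graded and $3$-cycle-graded parts, each of which is a $\Si$-submodule, and invoke the formula $\dim W^{\Si}=\sum_{x\in\R}\dim W_x^{\Si_x}$ of \S\ref{sub:omega-inv} to reduce to computing the fixed space of $\langle(12)\rangle$ on the $(12)$-graded part and of $\langle(123)\rangle$ on the $(123)$-graded part. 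The only non-formal ingredient is a short list of rewriting identities that place a transformed basis monomial back inside $\cS^n$ — of the shape $\a^{k}\cc^2=\a^{k}\b^2$, $\b^{k}\cc^2=\a^{k}\b^2$ and $\b^{n-1}\a=\a^{n-1}\cc$ for even $n$ — each of which follows from the relations of $S$ together with the separating algebra morphisms $\theta,\theta_\a,\theta_\b,\theta_\cc$ of Proposition \ref{prop:La B embeds into EB}(a) (two elements of a one-dimensional graded component that agree under $\theta$ must coincide). Carrying this out yields $\dim S_0^{\Si}=1$, $\dim S_1^{\Si}=0$, $\dim S_2^{\Si}=2$, $\dim S_n^{\Si}=0$ for odd $n\ge 3$, and $\dim S_n^{\Si}=3$ for even $n\ge 4$, which is exactly (c). For (a), in each even degree $2m\ge 4$ one exhibits three of the products $\mf u^{i}\mf v^{m-i}$ that are linearly independent (by inspecting their $\Si$-graded components and detecting non-vanishing with $\theta$); since $\dim S_{2m}^{\Si}=3$ these already span, and together with $\{\mf u,\mf v\}$ in degree $2$ this shows that $\ku[\mf u,\mf v]$ surjects onto $S^{\Si}$.

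Finally, in degree $6$ the four monomials $\mf u^3,\mf u^2\mf v,\mf u\mf v^2,\mf v^3$ lie in the $3$-dimensional space $S_6^{\Si}$, so they satisfy a linear relation; expanding them in the basis $\cS^6$ identifies it, up to a scalar, as $2\mf u\mf v^2+3\mf u^2\mf v-9\mf u^3=0$, completing (b). This cubic factors as $\mf u\,(2\mf v-3\mf u)(\mf v+3\mf u)=0$, a product of three pairwise non-proportional linear forms in $\mf u,\mf v$; since $\PGL_2$ is sharply $3$-transitive on the lines through the origin, a linear change of coordinates $U,V\in\langle\mf u,\mf v\rangle$ turns this relation into $U^2V-UV^2=0$. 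The induced algebra map $\ku[U,V]/(U^2V-UV^2)\to S^{\Si}$ is surjective by (a), and a direct count gives $\ku[U,V]/(U^2V-UV^2)$ (with $\deg U=\deg V=2$) the Hilbert series $1+2t^2+3\sum_{n\ge 2}t^{2n}$, which by (c) equals $H_{S^{\Si}}$; hence the map is an isomorphism, proving (d) (and, incidentally, re-deriving (c) once (d) is known). I expect the main obstacle to be the bookkeeping in the middle paragraph: correctly tracking the $\Si$-action on $\cS^n$ and assembling the rewriting identities needed to reduce transformed monomials back to that basis.
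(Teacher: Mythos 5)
Your overall architecture coincides with the paper's: use the monomial basis $\cS^n$ together with the separating characters $\theta,\theta_\a,\theta_\b,\theta_\cc$ to compute $\dim S_n^{\Si}$ degree by degree, exhibit $\mf u,\mf v$, locate the cubic relation in degree $6$, and finish with a linear change of variables plus a Hilbert series comparison against $\ku[U,V]/(U^2V-UV^2)$. Your organizing devices (the orbit/stabilizer formula of \S\ref{sub:omega-inv}, and finding the relation by putting four monomials into a three\nobreakdash-dimensional space) are pleasant variants, not essentially different routes. Your dimension counts agree with the paper's.

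Two points need repair, one of them substantive. First, the claim that braided commutativity ``degenerates to ordinary centrality'' is wrong for $\mf u$: its $\Si$-degree components are the two $3$-cycles, so $\mf u$ is not central in $E(B)$. What you actually need is only that $\mf v$, which has trivial $\Si$-degree and even $\N$-degree, is central (or, as the paper argues, that $\a^2,\b^2,\cc^2$ are central in $S$); either way $\mf u\mf v=\mf v\mf u$ survives. Second, and more seriously, the independence of three products $\mf u^i\mf v^{m-i}$ in each even degree cannot be ``detected with $\theta$'': on the three-dimensional space $S_{2m}^{\Si}$, with basis $\a^{2m}+\b^{2m}+\cc^{2m}$, $\a^{2m-1}(\b+\cc)$, $\a^{2m-2}\b^2$, the characters $\theta_\a,\theta_\b,\theta_\cc$ all restrict to the same functional $(1,0,0)$ and $\theta$ to $(3,2,1)$, so they span only a two-dimensional space of functionals and cannot certify three independent elements. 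You must actually compute the $\Si$-graded components of your chosen products for every $m$, and this is precisely where the paper does its only real work, proving by induction the closed formulas for $\mf u^n$, $\mf v^n$ and $\mf u\mf v^n$. A short substitute compatible with your plan: check once that $\mf u^2=\a^3(\b+\cc)+2\a^2\b^2$ and that $\a^2\b^2\,\mf v=3\,\a^4\b^2$; then $\mf u^2\mf v^{m-2}$ has nonzero $e$-component proportional to $\a^{2m-2}\b^2$, $\mf u\mf v^{m-1}$ is a nonzero multiple of $\a^{2m-1}(\b+\cc)$ (it lies in the $3$-cycle part, is invariant, and $\theta$ of it is $2\cdot 3^{m-1}$), and $\mf v^m$ is detected by $\theta_\a$; these three are then independent, which completes (a). Finally, your parenthetical justification of the rewriting identities is inaccurate as stated: the component containing $\a^k\cc^2$ and $\a^k\b^2$ is two- or four-dimensional, not one-dimensional; the identities do hold, but because all four characters together separate the basis monomials inside each bigraded component, which is exactly the mechanism of Lemma \ref{le:DimensionLambda}.
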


\begin{proof}
To prove (a), let us recall that $S_1$ and $B_1$ are isomorphic as Yetter-Drinfeld modules. Thus $^{(12)}\a=-\a$ and $^{(12)}\b=-\cc$. It follows that for any invariant element $\mf{w}=\alpha\a+\beta\b+\gamma\cc$ in $S_1$ we have $\alpha=0$ and $\beta=\gamma$. On the other hand, $^{(23)}\a=-\cc$ and $^{(23)}\b=-\b$. Since $\mf{w}$ is also $(23)$-invariant, we deduce in a similar way that $\beta=0$. Hence there are no invariant elements in $S_1$, excepting the trivial cohomology class.

We now want to prove that $S_n^{\,\Si}=0$, for any odd number $n>1$. By  Corollary \ref{DimensionLambda^n} the set 
$$
 \cS^n=\{\a^n,\b^n,\cc^n, \a^{n-1}\b,\a^{n-1}\cc,\a^{n-2}\b^2\}
$$ 
is a linear basis of $S_n$. Let $\mf{w}=\alpha\a^n+\beta\b^n+\gamma\cc^n+\delta \a^{n-1}\b+\epsilon \a^{n-1}\cc+\lambda \a^{n-2}\b^2$ be an invariant element in $S_n$. Since $^{(12)}\mf{w}=\mf{w}$, we get $\alpha=-\alpha$ and $\beta=-\gamma$. On the other hand, $\delta=-\epsilon$ and $\lambda=-\lambda$. Note that for the former relation we used the equation $\a^{n-2}\cc^2=\a^{n-2}\b^2$ which, in turn, follows by $\a\cc^2=\a\b^2$; see the proof of Lemma \ref{le:basis}. By imposing the condition $^{(23)}\mf{w}=\mf{w}$ we also get $\beta=0$, so $\mf{w}=\delta\a^{n-1}(\b-\cc)$. Thus  $^{(23)}\mf{w}=\delta(\cc^{n-1}\b-\cc^{n-1}\a)$. Since $\cc^{n-1}\a=\a^{n-2}\b^2$ and $\cc^{n-1}\b=\a^{n-1}\b$ we conclude that $\mf{w}=0$.

Proceeding in a similar way one proves that $\mf{u}$ and $\mf{v}$ span $S_2^{\,\Si}$. Also, if $n$ is even and $n>2$, then we can show that $\a^{n}+\b^n+\cc^n$, $\a^{n-1}(\b+\cc)$ and $\a^{n-2}\b^2$ are linearly independent elements which span $S_n^{\;\Si}$. Note that the Hilbert series of $S^{\Si}$ is given by $H_{S^{\Si}}=1+2t^2+\sum_{n\geq2}3t^{2n}$, so (c) has been proved too.

To complete the proof of (a) we first show by induction on $n$ that the following relations hold true:
\begin{align*}
 \mf{u}^n&=\frac{1}{3}[2^n+(-1)^{n+1}]\a^{2n-1}(\b+\cc)+\frac{2}{3}[2^{n-1}+(-1)^n]\a^{2n-2}\b^2;\\
 \mf{v}^n&=\a^{2n}+\b^{2n}+\cc^{2n}+3(3^{n-1}-1)\a^{2n-2}\b^2;\\
 \mf{u}\mf{v}^n&=3^{n}\a^{2n+1}(\b+\cc).
\end{align*}
These equations imply that, for any $n\geq 2$, the above basis on $S_n^{\Si}$ is included into the subalgebra generated by $\mf{u}$ and $\mf{v}$. Since $\a^2$, $\b^2$ and $\cc^2$ are central elements of $S$ we get $\mf{uv=vu}$. Then the part (b) follows by the computation below:
$$
\mf{u}\mf{v}^2=9\a^5(\b+\cc)=3\mf{u}^3-6\a^4\b^2=3\mf{u}^3-2\mf{v}\a^2\b^2=3\mf{u}^3-\frac{1}{3}(3\mf{u}^2\mf{v}-\mf{u}\mf{v}^2).$$
In order to prove (d), we observe that there is a surjective morphism of graded algebras from $S'=\ku[U,V]/(2UV^2+3U^2V-9U^3)$ to $S^{\Si}$; see the first two parts of the lemma. Since the generator of the ideal that defines $S'$ equals $(3U+V)(-3U+2V)U$, by the variable change 
\[
 U\mapsto 3U+V\qquad\text{and}\qquad V\mapsto -\frac{3}{2}U+V,
\]
 we can identify the graded algebras $S'$ and $\ku[U,V]/ (U^2V-UV^2)$. Thus, via this identification, the generators $U$ and $V$ of $S'$ satisfy the relation $U^2V=UV^2$. Clearly $\dim S_0'=1$ and $\dim S_1'=2$. On the other hand, for $n\geq 2$, the set $\{U^n,V^n, UV^{n-1}\}$ is a basis on $S_n'$. In conclusion the Hilbert series of $S'$ and $S^{\Si}$ are equal, so the canonical graded algebra map from $S'$ to $S^{\Si}$ is an isomorphism.
\end{proof}

\begin{theorem}\label{te:Bos}
 The $\ku$-algebra $E(B\#\ku\Si)$ is isomorphic to $\ku[X,U,V]/(U^2V-VU^2)$, where $\deg U=\deg V=2$ and $\deg X=4$.
\end{theorem}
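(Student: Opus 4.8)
The plan is to chain together the structural results established above. First I would apply Theorem \ref{te:H-invariants} with $H=\ku\Si$ and $\La=B$: since $\ku$ has characteristic zero the group algebra $\ku\Si$ is semisimple, and $B$ is a braided Hopf algebra in the category of Yetter-Drinfeld $\ku\Si$-modules, so the bosonization $B\#\ku\Si$ makes sense and there is an isomorphism of graded algebras $E(B\#\ku\Si)\cong E(B)^{\Si}$. Here the $\Si$-action on $E(B)$ is the canonical one, induced by the fact that the bar complex $\Ou{*}{B}$ is a complex in the category of Yetter-Drinfeld $\ku\Si$-modules; in particular $\Si$ acts on $E(B)$ by graded algebra automorphisms.

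Next I would transport this invariant subring through the isomorphism $\Phi\colon S[X]\xrightarrow{\sim}E(B)$ of Theorem \ref{te:main}. By construction $\Phi$ restricts to the identity on $S$ and sends the polynomial variable $X$ to the class $\mf{d}\in E_4(B)$. Now $S=\widetilde{E}(B)$ is a $\Si$-submodule of $E(B)$ by Proposition \ref{prop:La B embeds into EB}, while $\mf{d}$ is $\Si$-invariant by Lemma \ref{Generator4}; hence, letting $\Si$ act on $S[X]$ by $g\cdot\big(\sum_i s_i X^i\big)=\sum_i\big({}^{g}s_i\big)X^i$, the map $\Phi$ becomes $\Si$-equivariant. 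Consequently a polynomial $\sum_i s_iX^i$ is $\Si$-invariant precisely when every coefficient $s_i$ lies in $S^{\Si}$, so that $E(B)^{\Si}\cong S^{\Si}[X]$ as graded algebras, with $\deg X=4$.

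It then remains only to substitute the description of $S^{\Si}$ from the preceding lemma, according to which $S^{\Si}$ is generated by the degree-$2$ elements $\mf{u}=\a(\b+\cc)$ and $\mf{v}=\a^2+\b^2+\cc^2$, these satisfy $\mf{u}^2\mf{v}=\mf{u}\mf{v}^2$, and in fact $S^{\Si}\cong\ku[U,V]/(U^2V-UV^2)$ with $\deg U=\deg V=2$. Since $X$ is a central polynomial variable over $S^{\Si}$ — the invariant class $\mf{d}$ of even degree is central in $E(B)$ by braided graded commutativity, cf. Lemma \ref{BraidedCommutative} and the proof of Theorem \ref{te:main} — adjoining it yields
\[
E(B\#\ku\Si)\ \cong\ E(B)^{\Si}\ \cong\ S^{\Si}[X]\ \cong\ \big(\ku[U,V]/(U^2V-UV^2)\big)[X]\ \cong\ \ku[X,U,V]/(U^2V-UV^2),
\]
with $\deg U=\deg V=2$ and $\deg X=4$, which is the assertion.

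I do not expect a genuine obstacle in this argument: the hard work has already been done in Theorems \ref{te:H-invariants}, \ref{te:main} and in the lemma computing $S^{\Si}$. The only points that require a little care are checking that the $\Si$-action on $S[X]$ is trivial on $X$ — which is exactly the $\Si$-invariance of $\mf{d}$ — and that $X$ remains a free polynomial variable after passing to $\Si$-invariants, so that $(S[X])^{\Si}=S^{\Si}[X]$; both are immediate from the quoted results.
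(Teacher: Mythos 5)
Your argument is correct and is essentially the paper's own proof: both apply Theorem \ref{te:H-invariants} to get $E(B\#\ku\Si)\cong E(B)^{\Si}$, identify $E(B)$ with $S[X]$ via Theorem \ref{te:main} using the $\Si$-invariance of $\mf{d}$ so that $S[X]^{\Si}=S^{\Si}[X]$, and then invoke the lemma computing $S^{\Si}\cong\ku[U,V]/(U^2V-UV^2)$. Your write-up merely spells out the equivariance of $\Phi$ more explicitly than the paper does.
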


\begin{proof}
 We have already noticed that by Theorem \ref{te:H-invariants} there is an isomorphism of graded algebras $E(B\#\ku\Si)\cong S[X]^{\Si}$. Since $X$ is an $\Si$-invariant element, a polynomial in $S[X]$ is invariant if and only if it belongs to the polynomial ring $S^{\Si}[X]$.
\end{proof}

\begin{fact}[The Yoneda ring of the bosonization of $B$ by $\ku^{\Si}$, the dual Hopf algebra of $\ku\Si$.]
The Fomin-Kirillov algebra $B$ is a braided graded Hopf algebra in ${}^{\ku^{\Si}}_{\ku^{\Si}}\mathcal{YD}$ with respect to the action and coaction given by
$$
f\cdot x=f(\deg x)\,x\quad\mbox{and}\quad\rho(x)=\sum_{g\in\Si}\delta_{g^{-1}}\ot{}^gx
$$
for all $f\in\ku^{\Si}$ and $x\in B$, where in the second formula $\{\delta_g\}_{g\in\Si}$ denotes the basis of $\ku^{\Si}$ dual to $\{g\}_{g\in\Si}$. It is well-known that $B$ is the Nichols algebra of $B_1$ in the category of Yetter-Drinfeld $\ku^{\Si}$-modules, cf. \cite[$\S$ 3.2]{AV}. We also remark that  $B\#\ku\Si$ and $B\#\ku^{\Si}$ are Hopf algebras dual each other.

We shall compute $E(B\#\ku^{\Si})$ proceeding  as in the proof of Theorem \ref{te:Bos}. Since the counit of $\ku^{\Si}$ is the evaluation map $f\mapsto f(e)$ for all $f\in\ku^{\Si}$, the invariant subring $E(B)^{\ku^{\Si}}$ coincides with the subring of elements of degree $e$. Also, the degree of the generator $X$ of $E(B)=S[X]$ is $e$. Thus we have to compute the subring $S^{\ku^{\Si}}=S_e$.
\end{fact}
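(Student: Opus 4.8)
The statement to be proved is that, under the identification $E(B)=S[X]$ of Theorem~\ref{te:main} (with $X$ of $\Si$-degree $e$ and $\mathbb N$-degree $4$), one has $E(B\#\ku^{\Si})\cong S_e[X]$, and that $S_e$ is the algebra generated by the three central squares $\mf{p}=\a^2$, $\mf{q}=\b^2$, $\mf{r}=\cc^2$ subject only to $\mf{pq}=\mf{qr}=\mf{rp}$, so that $E(B\#\ku^{\Si})\cong\ku[X,\mf{p},\mf{q},\mf{r}]/(\mf{pq}-\mf{qr},\,\mf{qr}-\mf{rp})$ with $\deg X=4$ and $\deg\mf{p}=\deg\mf{q}=\deg\mf{r}=2$. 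The plan is to reduce everything to the determination of the algebra $S_e$. Since $\ku^{\Si}$ is a semisimple Hopf algebra (characteristic zero) and $B$ is a finite-dimensional algebra in ${}^{\ku^{\Si}}_{\ku^{\Si}}\mathcal{YD}$, Theorem~\ref{te:H-invariants} gives $E(B\#\ku^{\Si})\cong E(B)^{\ku^{\Si}}$, and by the remark preceding the present statement this invariant subring is exactly the $\Si$-degree-$e$ component $E(B)_e$. By Theorem~\ref{te:main}, $E(B)=S[X]$ with $X$ central of $\Si$-degree $e$; hence $E(B)_e=\bigoplus_{k\ge 0}S_e\,X^k=S_e[X]\cong S_e\otimes\ku[X]$, and it remains to identify $S_e$ as a graded $\ku$-algebra.

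First I would record a monomial basis of $S_e$. By Corollary~\ref{DimensionLambda^n} the set $\cS^n=\{\a^n,\b^n,\cc^n,\a^{n-1}\b,\a^{n-1}\cc,\a^{n-2}\b^2\}$ is a basis of $S_n$, where $\a,\b,\cc$ are $\Si$-homogeneous of degrees $(12),(23),(13)$. Computing the $\Si$-degree of each monomial shows that for odd $n$ none of them has degree $e$, so $(S_e)_n=0$, whereas for $n=2m$ the surviving basis vectors are $\a^{2m},\b^{2m},\cc^{2m}$ together with $\a^{2m-2}\b^2$ when $m\ge 2$ (for $m=1$ this last monomial is already $\b^2$). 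Thus $\dim(S_e)_0=1$, $\dim(S_e)_2=3$, $\dim(S_e)_{2m}=4$ for $m\ge 2$, and $(S_e)_n=0$ for odd $n$; equivalently the Hilbert series of $S_e$ is $(1+t^2)^2/(1-t^2)$.

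Next I would pin down the ring structure and conclude. The elements $\mf{p}=\a^2,\mf{q}=\b^2,\mf{r}=\cc^2$ are central in $S$, and since $\a^{2m}=\mf{p}^m$, $\a^{2m-2}\b^2=\mf{p}^{m-1}\mf{q}$ and their $\cS$-cyclic variants exhaust the basis above, the subalgebra they generate is all of $S_e$. Multiplying the relation $\a\cc^2=\a\b^2$ (used in the proof of Lemma~\ref{le:basis}) by $\a$ gives $\mf{p}\mf{r}=\mf{p}\mf{q}$, and applying the algebra automorphism of $S$ that cyclically permutes $\a,\b,\cc$ yields $\mf{p}\mf{q}=\mf{q}\mf{r}=\mf{r}\mf{p}$. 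Hence there is a graded surjection $\ku[P,Q,R]/(PQ-QR,\,QR-RP)\twoheadrightarrow S_e$ sending $(P,Q,R)\mapsto(\mf{p},\mf{q},\mf{r})$, with $\deg P=\deg Q=\deg R=2$. The defining ideal of the source has height $2$ (its zero locus in $\mathbb{A}^3$ is the union of the three coordinate axes and the diagonal line $P=Q=R$), so it is a complete intersection and the source has Hilbert series $(1-t^4)^2/(1-t^2)^3=(1+t^2)^2/(1-t^2)$; comparing with the second paragraph, the surjection is an isomorphism. Tensoring with $\ku[X]$, as in the first paragraph, gives $E(B\#\ku^{\Si})\cong\ku[X,\mf{p},\mf{q},\mf{r}]/(\mf{pq}-\mf{qr},\,\mf{qr}-\mf{rp})$, and one may finally normalise the three degree-$2$ variables by a linear change if a cleaner form of the relations is desired, exactly as was done in the proof of Theorem~\ref{te:Bos}. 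The only genuinely delicate point is the degree bookkeeping of the second paragraph — in particular that $\a^{2m-2}\b^2$ is a new basis vector only for $m\ge2$, which is precisely what produces the jump from $\dim(S_e)_2=3$ to $\dim(S_e)_{2m}=4$ — together with verifying that $\mf{p},\mf{q},\mf{r}$ satisfy no relations beyond $\mf{pq}=\mf{qr}=\mf{rp}$; the complete-intersection observation reduces this last check to a short Hilbert-series comparison. Everything else is immediate from Theorems~\ref{te:main} and~\ref{te:H-invariants}.
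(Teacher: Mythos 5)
Your proof is correct and follows essentially the same route as the paper: reduce via the semisimplicity of $\ku^{\Si}$ to the $\Si$-degree-$e$ subring, read off the monomial basis $\{\a^{2n},\b^{2n},\cc^{2n},\a^{2n-2}\b^2\}$ of $S_e$ from the basis $\cS^n$ of $S_n$, exhibit the generators $\a^2,\b^2,\cc^2$ and the relation $\mf{p}\mf{q}=\mf{q}\mf{r}=\mf{r}\mf{p}$, and conclude by a Hilbert-series comparison with $\ku[P,Q,R]/(PQ-PR,PQ-QR)$. The only (valid) deviations are cosmetic: you obtain the cyclic relations from the algebra automorphism of $S$ permuting $\a,\b,\cc$ instead of the paper's direct chain of equalities, and you compute the Hilbert series of the presented algebra by a complete-intersection argument where the paper writes down the explicit basis $\{P^n,Q^n,R^n,P^{n-1}Q\}$.
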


\begin{lema}\label{le:Bos 1}
Let $S$ denote the braided symmetric algebra of $E_1(B)$. Then:
\begin{enumerate}
 \item[(a)]The algebra $S_e$ is generated by $\mf{p}=\a^2$, $\mf{q}=\b^2$ and $\mf{r}=\cc^2$.
 \item[(b)] The generators $\mf{p}$, $\mf{q}$ and $\mf{r}$ commute,  and  $\mf{p}\mf{q}=\mf{p}\mf{r}=\mf{q}\mf{r}$.
 \item[(c)] The Hilbert series of $S_e$ is $H_{S_e}=1+3t^2+4\sum_{n\geq 2}t^{2n}$.
 \item[(d)] There is an isomorphism $S_e\cong\ku[P,Q,R]/(PQ-PR, PQ-QR)$ of graded algebras.
\end{enumerate}
\end{lema}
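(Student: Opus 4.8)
The plan is to follow the pattern of the computation of $S^{\,\Si}$ in the previous lemma, but now working with the $\Si$-grading on $S$ in place of the $\Si$-action. Recall from Corollary~\ref{DimensionLambda^n} that $\cS^n=\{\a^n,\b^n,\cc^n,\a^{n-1}\b,\a^{n-1}\cc,\a^{n-2}\b^2\}$ is a basis of $S_n$ for $n\geq2$, and that $\a,\b,\cc$ are $\Si$-homogeneous of degrees $(12),(23),(13)$; hence each of these six monomials is $\Si$-homogeneous, of degree equal to the ordered product of the degrees of its factors.

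First I would settle (a) and (c) simultaneously by inspecting these degrees. For $n$ odd the three pure powers lie in transpositions, $\a^{n-1}\b$ and $\a^{n-1}\cc$ also lie in transpositions (as $n-1$ is even), and $\a^{n-2}\b^2$ lies in $(12)$; so no basis element has degree $e$ and $(S_e)_n=0$. For $n=2$ exactly $\a^2,\b^2,\cc^2$ have degree $e$, and for even $n\geq4$ exactly $\a^n,\b^n,\cc^n,\a^{n-2}\b^2$ do; being members of the basis $\cS^n$ they are linearly independent. This yields $H_{S_e}=1+3t^2+4\sum_{n\geq2}t^{2n}$, which is (c), and since $\a^{2m}=\mf p^{\,m}$, $\b^{2m}=\mf q^{\,m}$, $\cc^{2m}=\mf r^{\,m}$ and $\a^{n-2}\b^2=\mf p^{\,(n-2)/2}\mf q$, the subalgebra generated by $\mf p,\mf q,\mf r$ already exhausts $S_e$, which is (a).

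For (b) I would invoke the chain of identities recorded just before Lemma~\ref{le:basis}: for a permutation $(x,y,z)$ of $(\a,\b,\cc)$ one has $x^2y=yx^2$ and $yz^2=yx^2$, together with the relation $\a\cc^2=\a\b^2$ used in the proof of Lemma~\ref{le:basis}. From $x^2y=yx^2$ each of $\mf p=\a^2$, $\mf q=\b^2$, $\mf r=\cc^2$ commutes with $\a,\b,\cc$, hence is central; in particular $\mf p,\mf q,\mf r$ commute. Multiplying $\a\cc^2=\a\b^2$ on the left by $\a$ gives $\mf p\mf r=\mf p\mf q$, and multiplying $\b\a^2=\b\cc^2$ (an instance of $yx^2=yz^2$) on the left by $\b$ gives $\mf p\mf q=\mf q\mf r$; so $\mf p\mf q=\mf p\mf r=\mf q\mf r$.

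Finally, for (d), parts (a) and (b) furnish a surjection of graded algebras $S'':=\ku[P,Q,R]/(PQ-PR,\,PQ-QR)\to S_e$ sending $P,Q,R$ to $\mf p,\mf q,\mf r$ (with $\deg P=\deg Q=\deg R=2$), and by (c) it suffices to check that the Hilbert series of $S''$ is again $1+3t^2+4\sum_{n\geq2}t^{2n}$. I expect this Hilbert series computation to be the only genuine work: I would do it by exhibiting a Gr\"obner basis of the defining ideal — for the lexicographic order $P>Q>R$ it is $\{PQ-PR,\ PR-QR,\ Q^2R-QR^2\}$ — so that the standard monomials are $1$, the pure powers $P^k,Q^k,R^k$ $(k\geq1)$ and $QR^k$ $(k\geq1)$; grouping them by total degree gives dimensions $1,3,4,4,\dots$, i.e.\ exactly the required series once $\deg$ is rescaled by $2$. (Equivalently, one checks directly that any monomial in which at least two of $P,Q,R$ occur is congruent modulo $(PQ-PR,PQ-QR)$ to $QR^{\,d-1}$ in degree $d$.) Hence $S''\to S_e$ is an isomorphism of graded algebras.
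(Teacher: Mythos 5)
Your proof is correct and follows essentially the same route as the paper: read off $(S_e)_n$ from the $\Si$-degrees of the basis $\cS^n$, derive $\mf{p}\mf{q}=\mf{p}\mf{r}=\mf{q}\mf{r}$ from the quadratic relations, and match Hilbert series against $\ku[P,Q,R]/(PQ-PR,PQ-QR)$. The only (harmless) difference is in the last step: the paper exhibits the spanning set $\{P^n,Q^n,R^n,P^{n-1}Q\}$ and gets the lower bound from the surjection onto $S_e$, whereas you compute the Hilbert series of the quotient directly via a Gr\"obner basis, which is self-contained and equally valid.
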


\begin{proof}
Recall that there is an isomorphism $S_1\simeq B_1$  of Yetter-Drinfeld modules. Thus $\deg\a=(12)$, $\deg\b=(23)$ and $\deg\cc=(13)$. 

By Lemma \ref{le:basis}, the set  $\{\a^{2n},\b^{2n},\cc^{2n},\a^{2n-2}\b^2\}_{n\in\N}$ is a basis of $S_{e}$ . Therefore (a) and (c) are clear. The second statement follows by the computation: $$\mf{p}\mf{q}=\a^2\b^2=\a(\a\b^2)=\a(\a\cc^2)=\mf{p}\mf{r}=(\a^2\cc)\cc=(\b^2\cc)\cc=\mf{q}\mf{r}.$$
Let $S'=\ku[P,Q,R]/(PQ-PR, PQ-QR)$. By (a) and (b), there is a surjective morphism of graded algebras from $S'$  to $S_e$. Then to prove (d) it is enough to see that the homogeneous components of  $S'$ and  ${S_e}$ are equidimensional. Clearly $\dim S_0'=1$ and $\dim S_1'=2$. Moreover, for $n\geq 2$, the set $\{P^n, Q^n, R^n, P^{n-1}Q\}$ is a basis of $S_n'$ as, by induction,   $P^nQ=PQ^n=PR^n=Q^nR=QR^n$. 
\end{proof}

The proof of the next theorem is similar to that one of Theorem \ref{te:Bos}.

\begin{theorem}\label{te:Bos 1} 
The $\ku$-algebras $E(B\#\ku^{\Si})$ and $\ku[X,P,Q,R]/(PQ-PR , PQ-QR)$ are isomorphic, where $\deg P=\deg Q=\deg R=2$ and $\deg X=4$. \qed
\end{theorem}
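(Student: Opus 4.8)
The plan is to mimic the proof of Theorem~\ref{te:Bos}, replacing the group algebra $\ku\Si$ by its dual Hopf algebra $\ku^{\Si}$ throughout. First I would apply Theorem~\ref{te:H-invariants}: the algebra $\ku^{\Si}$ is semisimple (it is isomorphic as an algebra to $\ku^{|\Si|}$), and $B$ is a finite dimensional algebra in ${}^{\ku^{\Si}}_{\ku^{\Si}}\mathcal{YD}$, so the smash product $B\#\ku^{\Si}$ makes sense and there is an isomorphism of graded algebras
\[
E(B\#\ku^{\Si})\cong E(B)^{\ku^{\Si}},
\]
where $\ku^{\Si}$ acts on $E(B)$ through the canonical action on the bar complex $\Ou{*}{B}$.

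Next I would make the invariant subring explicit. As recorded in the paragraph preceding Lemma~\ref{le:Bos 1}, the action of $f\in\ku^{\Si}$ on an element $x$ of $\Si$-degree $g$ is $f\cdot x=f(g)x$, while the counit of $\ku^{\Si}$ is the evaluation $f\mapsto f(e)$; hence $f\cdot x=\varepsilon(f)x$ for every $f$ exactly when $g=e$. Therefore $E(B)^{\ku^{\Si}}$ is precisely the component $E(B)_e$ of $\Si$-degree $e$.

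Now I would invoke Theorem~\ref{te:main}: $E(B)\cong S[X]$, where $S$ is the braided symmetric algebra of $E_1(B)$ and $X$ corresponds to the central cohomology class $\mf{d}\in E_4(B)_e$ of Lemma~\ref{Generator4}. Since $X$ is homogeneous of $\Si$-degree $e$, a monomial $sX^k$ with $s\in S$ has $\Si$-degree $\deg s$, so it lies in $S[X]_e$ if and only if $s\in S_e$; consequently the degree-$e$ part $S[X]_e$ coincides with the polynomial ring $S_e[X]$. Combining this with Lemma~\ref{le:Bos 1}(d), which identifies $S_e$ with $\ku[P,Q,R]/(PQ-PR,\,PQ-QR)$ where $\deg P=\deg Q=\deg R=2$, and recalling $\deg X=4$, I conclude
\[
E(B\#\ku^{\Si})\cong S_e[X]\cong\ku[X,P,Q,R]/(PQ-PR,\,PQ-QR).
\]

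The one point needing a little care — and the main, though mild, obstacle — is verifying that the $\ku^{\Si}$-module structure on $E(B)$ supplied by Theorem~\ref{te:H-invariants} is indeed the one afforded by the $\Si$-grading, so that passing to $\ku^{\Si}$-invariants is the same as passing to the $\Si$-degree-$e$ component; this is exactly the computation carried out in the remarks preceding Lemma~\ref{le:Bos 1}, so nothing new is required. The remaining steps are formal: the polynomial extension is compatible with taking a homogeneous slice because $X$ is central and of pure degree $e$.
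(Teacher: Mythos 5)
Your proposal is correct and follows essentially the same route as the paper: apply Theorem \ref{te:H-invariants} with the semisimple Hopf algebra $\ku^{\Si}$, identify $E(B)^{\ku^{\Si}}$ with the $\Si$-degree-$e$ component $S[X]_e=S_e[X]$ (using that $X$ has degree $e$), and conclude via Lemma \ref{le:Bos 1}(d). This is precisely how the paper argues, via the remarks preceding Lemma \ref{le:Bos 1} and the analogy with Theorem \ref{te:Bos}.
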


\begin{obs}
For any connected graded coalgebra $C$ over a field $\ku$ one defines the cohomology ring $E(C)=\Ext_C^*(\ku,\ku)$ as in the case of connected algebras, but now the $\Ext$-groups are computed for the trivial right $C$-comodule $\ku$. Since $B\#\ku\Si$ and $B\#\ku^{\mathbb{S}_3}$ are dual each other, it follows that the cohomology ring of the former Hopf algebra, regarded as a coalgebra, is precisely the polynomial algebra from Theorem \ref{te:Bos 1}. 
\end{obs}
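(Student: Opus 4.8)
The plan is to deduce the statement from Theorem~\ref{te:Bos 1} by means of the classical duality between comodules over a finite dimensional coalgebra and modules over its dual algebra, so that no part of the computation carried out for $B\#\ku^{\Si}$ need be repeated. Set $C=B\#\ku\Si$ and $A=C^*$. As $B$ and $\ku\Si$ are finite dimensional, $C$ is a finite dimensional coalgebra, and the hypothesis recalled just before the statement gives $A=(B\#\ku\Si)^*=B\#\ku^{\Si}$ as Hopf algebras. First I would invoke the covariant equivalence of abelian categories $F$ from the category of right $C$-comodules to the category of left $A$-modules which is the identity on underlying vector spaces and turns a coaction $\rho(m)=\sum m_{(0)}\ot m_{(1)}$ into the left $A$-action $f\cdot m=\sum m_{(0)}\,f(m_{(1)})$. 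That this is a well-defined left action follows from the fact that the multiplication of $A=C^*$ is the convolution product, and that $F$ is an equivalence is exactly where the finite dimensionality of $C$ enters, since then every $A$-module is rational.

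Next I would match the distinguished objects. The trivial right $C$-comodule is $\ku$ with $\rho(1)=1\ot 1_C$, where $1_C$ is the grouplike unit of the Hopf algebra $C$; hence $F(\ku)$ is $\ku$ endowed with the action $f\cdot 1=f(1_C)$. Since the counit of the dual Hopf algebra $A=C^*$ is the evaluation $\varepsilon_A(f)=f(1_C)$, the module $F(\ku)$ is precisely the trivial left $A$-module. Thus $F$ carries the object whose self-extensions define $E(C)$ to the object whose self-extensions define $E(A)$ in the sense used throughout the paper.

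The transfer of cohomology is then formal. An equivalence of abelian categories induces, for every $n$, a natural isomorphism of Yoneda groups, so
\[
\Ext_C^n(\ku,\ku)\simeq\Ext_A^n\big(F(\ku),F(\ku)\big)=\Ext_A^n(\ku,\ku);
\]
moreover these isomorphisms are compatible with the Yoneda splicing product, since that product is defined purely in terms of the abelian category structure. Consequently $F$ yields an isomorphism of graded algebras $E(C)\simeq E(A)$, the homological grading (in which $P,Q,R$ sit in degree $2$ and $X$ in degree $4$) being respected automatically. Combining this with Theorem~\ref{te:Bos 1}, which identifies $E(A)=E(B\#\ku^{\Si})$ with $\ku[X,P,Q,R]/(PQ-PR,\,PQ-QR)$, gives the claimed description of $E(B\#\ku\Si)$ regarded as a coalgebra.

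The verifications that $f\cdot m=\sum m_{(0)}f(m_{(1)})$ defines a left action and that $F$ is an equivalence are entirely standard, so the only point that genuinely uses the Hopf-algebra (rather than mere coalgebra) duality between $B\#\ku\Si$ and $B\#\ku^{\Si}$ is the identification $F(\ku)=\ku_{\mathrm{triv}}$, i.e.\ that the grouplike $1_C$ is sent to $1$ by the counit of the dual Hopf algebra $A$. This is the step I would write out with care; once it is in place, everything reduces to Theorem~\ref{te:Bos 1}, and I expect it to be the only substantive obstacle.
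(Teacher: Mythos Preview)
Your argument is correct and is exactly the justification the paper has in mind. In the paper this statement is recorded as a remark with no accompanying proof: it simply invokes the duality $(B\#\ku\Si)^*\cong B\#\ku^{\Si}$ and asserts the conclusion. Your proposal spells out the standard mechanism behind that assertion---the equivalence between right $C$-comodules and left $C^*$-modules for finite-dimensional $C$, the identification of the trivial comodule with the trivial module via $\varepsilon_{C^*}(f)=f(1_C)$, and the preservation of Yoneda Ext groups and their splicing product under an equivalence of abelian categories---so there is no divergence in approach, only a difference in level of detail.
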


\section*{Acknowledgments.} 
The first author was financially supported by UEFISCDI,  Grant 253/05.10.2011  (UEFISCDI code ID 0635).  The second author was partially supported by ANPCyT-Foncyt, CONICET and Secyt (UNC).

This work was carried out in part during the visit of the first author to the University of C\'ordoba (Argentina) under the framework of the program \textit{Internships for foreign researchers~/~experts during sabbatical periods}, supported by CONICET Argentina. He would like to thank the Faculty of Mathematics, Astronomy and Physics for its worm hospitality and support. Both authors are grateful to Nicol\'as Andruskiewitsch for drawing their attention to the intricate problem of computing the cohomology ring of braided bialgebras in general  and Fomin-Kirillov algebras in particular, and also for so many stimulating discussions.

\end{document}